\newcommand{\seqo}{{}^{<\omega}\omega}
\newcommand{\fhi}{\varphi}
\newcommand{\imp}{\Rightarrow}
\renewcommand{\L}{\mathcal{L}}
\newcommand{\G}{\mathbb{G}}
\newcommand{\QQ}{\mathbb{Q}}
\newcommand{\RR}{\mathbb{R}}
\newtheorem{theorem}{Theorem}[section]
\newtheorem{lemma}[theorem]{Lemma}
\newtheorem{corollary}[theorem]{Corollary}
\newtheorem{proposition}[theorem]{Proposition}
\newtheorem{question}[theorem]{Question}
\theoremstyle{definition}
\newtheorem{claim}{Claim}[theorem]
\newtheorem{defin}[theorem]{Definition}
\theoremstyle{remark}
\newtheorem{remark}[theorem]{Remark}
\begin{document}

\title{Invariantly universal analytic quasi-orders}
\date{April 29, 2011}
\author{Riccardo Camerlo}
\address{Dipartimento di Matematica,
Politecnico di Torino,
corso Duca degli Abruzzi 24,
10129 Torino,
Italy}
\email{camerlo@calvino.polito.it}

\author{Alberto Marcone}
    \address{Dipartimento di Matematica e Informatica,
    Universit\`{a} di Udine,
    viale delle Scienze 206,
    33100 Udine,
    Italy}
\email{alberto.marcone@dimi.uniud.it}

\author{Luca Motto Ros}
\address{Kurt G\"odel Research Center for Mathematical Logic\\
  University of Vienna \\ W\"ahringer Stra{\ss}e 25 \\
 A-1090 Vienna \\
Austria}
\curraddr{Albert-Ludwigs-Universit\"at Freiburg \\
Mathematisches Institut -- Abteilung f\"ur Mathematische Logik\\
Eckerstra{\ss}e, 1 \\
 D-79104 Freiburg im Breisgau\\
Germany}
\email{luca.motto.ros@math.uni-freiburg.de}
\thanks{Camerlo's research was partially supported by FWF (Austrian Research
  Fund) through Project number P 19898-N18. Motto Ros' research was supported by FWF  through Project number P 19898-N18. Marcone's research was partially supported by FWF through Project number P 19898-N18 and by PRIN of Italy.}
\keywords{Analytic equivalence relations, analytic quasi-orders, Borel reducibility, completeness, invariant universality, colored linear orders, dendrites, (ultrametric) Polish spaces, separable Banach spaces}
\subjclass[2010]{03E15}

\begin{abstract}
We introduce the notion of an invariantly universal pair $(S,E)$ where $S$ is an analytic quasi-order and $E\subseteq S$ is an analytic equivalence relation.
This means that for any analytic quasi-order $R$ there is a Borel set $B$ invariant under $E$ such that $R$ is Borel bireducible with the restriction of $S$ to $B$.
We prove a general result giving a sufficient condition for invariant universality, and we demonstrate several applications of this theorem by showing that the phenomenon of invariant universality is widespread.
In fact it occurs for a great number of complete analytic quasi-orders, arising in different areas of mathematics, when they are paired with natural equivalence relations.
\end{abstract}

\maketitle

\section{Introduction}

Given analytic quasi-orders $R,R'$ on standard Borel spaces $X,X'$,
respectively, say that $R$ \emph{Borel reduces} to $R'$, in symbols
$R \leq_B R'$, if there is a Borel function $f \colon X \to X'$ such that
$\forall x,y \in X  (x\, R\, y \iff f(x)\, R'\, f(y))$.
Quasi-orders $R,R'$ are \emph{Borel bireducible} if $R \leq_B R' \leq_B
R$, in symbols $R \sim_B R'$.
In the sequel we will denote by $E_R$ the equivalence relation $R\cap R^{-1}$ induced by a quasi-order $R$.

In \cite{louros}, Louveau and Rosendal proved the existence of a \emph{complete}\label{completeness} (sometimes called
\emph{universal}) analytic quasi-order $S$ on a standard Borel space: this means that $R \leq_B S$ for every analytic
quasi-order $R$ defined on a standard Borel space.
Examples of such $S$ can be given in spaces of countable structures
(for more details on these spaces, a reference is
\cite[\S 16.C]{Kechris1995}): if $\L$ is a countable relational language, let $Mod_\L$ be
the Polish space of (codes for) $\L$-structures with
universe $\omega$, and $j_\L$ the
logic action of $S_\infty$ (the symmetric group of $\omega$) on
$Mod_\L$.
If $ \fhi$ is a sentence of $\L_{\omega_1 \omega}$, then $Mod_\fhi$
will stand for the set of elements of $Mod_\L$ satisfying $\fhi$, and
by Lopez-Escobar theorem (see \cite[Theorem 16.8]{Kechris1995}), the sets
of the form $Mod_\fhi$ are exactly the Borel subsets of $Mod_\L$
invariant under $j_\L$, i.e.\ invariant under isomorphism.
An example of a complete analytic quasi-order $S$ is then the relation
of embeddability for graphs on $\omega$.

In \cite{FriMot} Friedman and the third author strengthened this
result by showing that for
any analytic quasi-order $R$ on a standard Borel space there is a
Borel class $B$ of graphs on $\omega$ invariant under isomorphism
such that $R$ is Borel bireducible with the relation of embeddability
restricted to $B$.
This situation suggests the following definition.

\begin{defin}
Let $S$ be an analytic quasi-order on some standard Borel space $X$
and let $E$ be an analytic equivalence subrelation of $S$ (so that $E\subseteq E_S$).
We say that the pair $(S,E)$ is \emph{invariantly universal} for
analytic quasi-orders if
for any analytic quasi-order $R$ there is a Borel subset $B \subseteq
X$ invariant with respect to $E$ such that the restriction of $S$ to
$B$ is Borel bireducible with $R$.

Similarly, if $F,E$ are analytic equivalence relations on $X$ such
that $E\subseteq F$, the pair $(F,E)$ is \emph{invariantly universal}
for analytic equivalence relations if for every analytic equivalence
relation $R$ there is a Borel $E$-invariant subset $B\subseteq X$ such
that the restriction of $F$ to $B$ is Borel bireducible with $R$.
\end{defin}

Invariant universality is a natural strengthening of completeness: in fact, if $(S,E)$ is a pair which is invariantly universal for the class of analytic quasi-orders, then $S$ must be complete for the same class (and similarly for pairs of the form $(F,E)$). Despite the fact that invariant universality looks as a very strong notion, the results of this paper show that it is a widespread phenomenon. Moreover, it is somehow related to the notions of completeness with respect to some natural strengthenings of Borel reducibilty, called faithful Borel reducibility and classwise Borel embeddability (introduced respectively in \cite{frista1989} and \cite{mottoros}). For example, if $F$ is complete with respect to classwise Borel embeddability for the class of all analytic equivalence relations, then the pair $(F,F)$ is invariantly universal for the same class.

It should also be noted here that invariant universality is an example of
 the descriptive set theoretical phenomenon of universality, as
 opposed to the notion of completeness. Generally speaking, if
 $\Gamma$ is a class of mathematical objects and $X \in \Gamma$, we
 say that $X$ is $\Gamma$-complete if each element of
 $\Gamma$ is in some specific sense reducible to $X$, while we say
 that $X$ is $\Gamma$-universal if it contains in a
 ``natural'' way a copy of every element of $\Gamma$. Let now
 $\Gamma$ be the class of all analytic quasi-orders (or,
 respectively, of all analytic equivalence relations). If we use
 $\leq_B$ as the reducibility notion, then $\Gamma$-completeness coincides with the
 notion of completeness given on page \pageref{completeness}. On the other hand,
in most practical cases one could arguably view the property of ``being Borel
bireducible with the restriction of $S$ to a Borel $E$-invariant set'' as a
translation in our context of the property of ``having a copy naturally contained
in $S$'': with this identification, $\Gamma$-universality coincides
with invariant universality. This will be further clarified by the concrete examples given in Section \ref{sectionapplications}.

Notice that if $(S,E)$ is invariantly universal for quasi-orders, then
$(E_S,E)$ is invariantly universal for equivalence relations. So we
shall confine our attention to the former notion, as all results will
have a counterpart in the equivalence relation setting.

An elementary property to be noted here is that if the pair $(S,E)$ is
universal and $F$ is an equivalence subrelation of $E$, then $(S,F)$
too is universal.

To give a first example of an invariantly universal pair, fix $W\subseteq {}^{\omega }2\times {}^{\omega }2\times {}^{\omega }2$ an analytic set whose sections are exactly all analytic quasi-orders on ${}^{\omega }2$.
Now define
$$(x,y)\, S\, (x',y') \iff x=x'\wedge (x,y,y')\in W;$$
this is the first example of a complete analytic quasi-order presented
in \cite{louros}. Now, the pair $(S,E_S)$ is easily seen to be
invariantly universal. Hence, for every analytic equivalence relation
$E\subseteq E_S$, the pair $(S,E)$ is invariantly universal as well.

In this paper we show that many complete analytic quasi-orders, paired with natural equivalence relations, are invariantly universal.
To see however that this is not always the case, consider an analytic non-Borel subset $A$ of a standard Borel space $X$ and $S$ a complete analytic quasi-order on $X$.
Define the analytic quasi-order $R$ on $X\times X$ by
$$(x,y)\, R\, (x',y')\iff (x,y)=(x',y')\vee (x,x'\in A \wedge y\, S\, y')$$
and the analytic equivalence relation $E$ by
$$(x,y)\, E\, (x',y') \iff y=y'\wedge (x,x'\in A \vee x=x').$$
Then $R$ is a complete analytic quasi-order, but $(R,E)$ (so neither $(R,E_R)$) is not invariantly universal, as the restriction of $R$ to any Borel $E$-invariant set is either non-Borel or a smooth equivalence relation.

The paper consists of two parts. The first one culminates with the
proof of Theorem \ref{theorsaturation}, the main result of the paper,
which gives a widely applicable sufficient condition for a pair $(S,E)$
to be invariantly universal. This condition might appear somewhat
technical, however it turns out to be quite powerful. In fact, it will
be our tool for giving many examples of invariantly universal pairs in
the second part of the article. These include quasi-orders from model
theory, combinatorics, topology, metric space theory and separable
Banach space theory. Notice that each of these results of invariant
universality can also be interpreted as a characterization of the class
of analytic quasi-orders and equivalence relations, as if $(S,E)$ is
invariantly universal (where $S,E$ are defined on a standard Borel
space $X$) then a binary relation $R$ defined on a standard Borel space
is an analytic quasi-order (resp.\ an analytic equivalence relation) if
and only if there is a Borel $E$-invariant  $B \subseteq X$ such that
$R \sim_B {S \restriction B}$ (resp.\ $R \sim_B {E_S \restriction B}$).

We warmly thank Christian Rosendal, to whom we are indebted for several illuminating discussions.
These eventually led to our results on separable Banach spaces contained in Subsection \ref{banach}.

\section{Some preliminaries and notations}

A typical example of a pair $(S,E)$ which will be considered in this
paper is the following: suppose that $A
\subseteq {}^\omega \omega \times X \times X$ --- where ${}^\omega
\omega$ is the Baire space consisting of all functions from $\omega$
into itself --- is Borel and such that for all $x,y,z\in X$
\begin{itemize}
\item $(id, x,x) \in A$,
\item $(f,x,y) \in A
\wedge (g,y,z) \in A \imp (g \circ f, x,z) \in A.$
\end{itemize}
 Then the relation $S = \{ (x,y) \in X^2 \mid \exists f \colon \omega
 \to \omega (f,x,y) \in A \}$ is an analytic
quasi-order, which we will call a \emph{morphism relation}.
The equivalence relation
\[ E = \{ (x,y) \in X^2 \mid
\exists f \in S_\infty ((f,x,y) \in A \wedge (f^{-1},y,x) \in A)\} \]
will be called the $A$-\emph{isomorphism relation} or, with some abuse
of terminology, the $S$-\emph{isomorphism relation}.
This is usually a minor abuse, as in all cases we shall consider there
is a natural $A$ which generates $S$.
The equivalence
relation $E_S$ induced by $S$, which
clearly satisfies $E \subseteq E_S$, will be
called \emph{bi-morphism relation}.

A natural context in which such
morphism relations appear is when considering the quasi-order $S$
induced by some
model theoretic
notions of morphism (like e.g.\ embedding, homomorphism,
weak-homomorphism or epimorphism) on
some $Mod_{\psi }$: in all these cases the $S$-isomorphism relation
associated with $S$ is simply the isomorphism relation $\cong$ so
that, again by Lopez-Escobar theorem, such an
$S$ is invariantly universal if and only if for every analytic quasi-order $R$
there is an $\L_{\omega_1 \omega}$-sentence $\fhi$ such that $R$ is
Borel bireducible with $S$ restricted to $Mod_\fhi$.

The terminology given above will be extended in a natural way to many other kinds
of relations we shall consider, and when dealing with a pair $(S,E)$ given by a
morphism relation and its
associated $S$-isomorphism relation, we will briefly say that
the quasi-order $S$
is invariantly universal.
Using this terminology, we can reformulate some results of \cite{FriMot} (one of which was mentioned in the introduction) by
saying:

\begin{theorem}
The relations of embeddability and of
homomorphism for graphs on $\omega$ are both
invariantly universal.
\end{theorem}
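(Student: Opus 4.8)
The plan is to recast the relevant theorems of \cite{FriMot} in the terminology of morphism relations introduced above; essentially no new argument is needed. Fix $\L$ to be the language with a single binary relation symbol, so that $Mod_\L$ is (codes for) the space of graphs on $\omega$ (the argument is insensitive to whether one works with symmetric graphs, directed graphs, etc.). For $S$ the embeddability quasi-order take as generating Borel set
\[ A_{\mathrm{emb}} = \{ (f,x,y) \mid f \text{ is an embedding of the structure coded by } x \text{ into the one coded by } y \}, \]
and for $S$ the homomorphism quasi-order take $A_{\mathrm{hom}}$ defined analogously with ``homomorphism'' in place of ``embedding''. In both cases the two closure conditions of Section~\ref{sectionapplications} hold --- $id$ is an embedding (homomorphism) of any structure into itself, and embeddings (homomorphisms) compose --- so each such $S$ is a morphism relation and the pair $(S,E)$ with $E$ the associated $S$-isomorphism relation makes sense.

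The first step is to identify that $S$-isomorphism relation. For embeddability this is immediate: if $f\in S_\infty$ is an embedding of $x$ into $y$ then $f$ is a bijection preserving and reflecting the relation, hence an isomorphism, and conversely. For homomorphism, suppose $f\in S_\infty$ is such that both $f$ and $f^{-1}$ lie (in the appropriate slots) in $A_{\mathrm{hom}}$: then an edge of $x$ at $(a,b)$ forces an edge of $y$ at $(f(a),f(b))$, and applying the second homomorphism to the pair $(f(a),f(b))$ yields the reverse implication; since $f$ is bijective this makes $f$ an isomorphism, and again the converse is clear. Thus in both cases the $S$-isomorphism relation is exactly graph isomorphism $\cong$. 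By the Lopez-Escobar theorem the Borel $\cong$-invariant subsets of $Mod_\L$ are precisely the sets $Mod_\fhi$ for $\fhi$ an $\L_{\omega_1\omega}$-sentence, so, as explained just before the statement, it suffices to produce, for each analytic quasi-order $R$, such a $\fhi$ with $R\sim_B{S\restriction Mod_\fhi}$.

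The second and final step is to invoke \cite{FriMot}. For embeddability this is exactly the strengthening of the Louveau--Rosendal theorem recalled in the introduction: for every analytic quasi-order $R$ there is a Borel isomorphism-invariant class $B$ of graphs with $R\sim_B{\sqsubseteq\restriction B}$, and $B=Mod_\fhi$ for a suitable $\fhi$. For homomorphism one cites the companion result of \cite{FriMot}; alternatively one observes that the class $B$ produced there can be arranged so that on $B$ the homomorphism and embeddability quasi-orders coincide, each member of $B$ carrying enough rigid ``scaffolding'' that every homomorphism between members of $B$ is forced to be an embedding, whence the same $\fhi$ works. I expect this last reduction to be the only delicate point: it requires the encoding of analytic quasi-orders into graphs to be robust enough that weak morphisms between encoded structures are automatically honest embeddings. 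Granting that (which \cite{FriMot} supplies), the theorem follows.
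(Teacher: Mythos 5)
Your proposal is correct and follows essentially the same route as the paper: the theorem is, as you say, a reformulation of results already proved in \cite{FriMot} (embeddability and homomorphism separately), and the paper likewise presents it as a direct citation rather than reproving it. Your preliminary check that the $S$-isomorphism relation in both cases is graph isomorphism is the right sanity check, though the paper takes it for granted.
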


\medskip
We now fix some notation and recall some results for later use.
Given a Polish space $X$, the standard Borel space of closed subsets
of $X$ will be denoted by $F(X)$.
If $X$ is a Polish group, $G(X)$ will stand for the standard Borel
space of closed subgroups of $X$.
The orbit equivalence relation induced by an action $a$ will be denoted $E_a$.

For $t$ a finite sequence, $|t|$ will denote its length.
Given an element $u$ and a natural number $n$, $u^n$ will indicate the
sequence of length $n$ and constant value $u$.
Starting with a linearly ordered set, we shall denote by $\leq_{lex}$ the lexicographic order on the set of finite sequences.
If two sequences $s,t \in \seqo$ have the
same length, we
put $s \leq t$ if $s(i) \leq t(i)$ for every $i < |s|=|t|$, and
define $s+t \in \seqo$ by setting $(s+t)(i) = s(i)+t(i)$. Given a
non-empty set $X$ and a tree
$T$ on $X \times \omega$, we say that $T$ is \emph{normal} if
given $u \in {}^{<\omega}X$ and $s \in \seqo$, $(u,s) \in T$ implies
$(u,t) \in T$ for every $s \leq t$.

It is well-known that any analytic
subset of ${}^\omega 2 \times {}^\omega 2$, in particular any
analytic quasi-order $R$ on ${}^\omega 2$, is the projection $p$ of the body of a tree
on $2 \times 2 \times \omega$.
In \cite[Theorem 2.4]{louros}, this property is strengthened by showing that such an $R$ can be
viewed as the
projection of a normal tree $S$ on $2 \times 2 \times \omega$ such
that the reflexivity and transitivity properties of $R$ are
mirrored by corresponding local properties of $S$. A further
strengthening, useful in applications, was isolated in \cite{FriMot}.
Wrapping up, we have the following.

\begin{proposition}[\cite{louros}, \cite{FriMot}]\label{propnormalform}
  Let $R$ be any analytic quasi-order on ${}^\omega 2$. Then there is a
  normal tree $S$ on $2 \times 2 \times \omega$ such that:
  \begin{enumerate}[i)]
  \item $R = p[S]$;
\item for every $u \in {}^{< \omega}2$ and $s \in \seqo$ of the same
  length, $(u,u,s) \in S$;
\item for every $u,v,w \in {}^{<\omega}2$ and $s,t \in \seqo$ of the
  same length, if $(u,v,s) \in S$ and $(v,w,t) \in S$ then $(u,w,s+t)
  \in S$.
 \item for every $u,v \in {}^{<\omega}2$ of the same
   length, $(u,v,0^{|u|}) \in S$ implies $u=v$.
\end{enumerate}
\end{proposition}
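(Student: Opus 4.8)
This proposition packages \cite[Theorem~2.4]{louros} (which supplies properties (i)--(iii)) together with the refinement isolated in \cite{FriMot} (which adds (iv)), so in practice I would simply cite those sources. If one wants to reprove it, here is the line of attack.

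Start from any tree presentation $R = p[U]$, with $U$ a tree on $2 \times 2 \times \omega$, and massage $U$ in two preliminary steps. First, normalize in the $\omega$-coordinate, replacing $U$ by $U_1 = \{(u,v,s) \mid \exists s' \leq s \text{ of length } |s|,\ (u,v,s') \in U\}$: this is a normal tree, and $R = p[U_1]$, the nonobvious inclusion $p[U_1] \subseteq R$ coming from a König's lemma argument applied to the finitely branching tree of those $s \leq g\restriction|s|$ with $(x\restriction|s|, y\restriction|s|, s) \in U$, which recovers a genuine witness. Second, force zero witnesses to be diagonal and throw in the full diagonal: set $U^* = \{(u,v,s) \in U_1 \mid \forall i<|u|\,(u(i) \neq v(i) \to s(i) \geq 1)\} \cup \{(u,u,s) \mid |u| = |s|\}$. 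Bumping any witness pointwise by $1$ and using normality shows $p[U^*] = R$; moreover $U^*$ is still a normal tree, it contains the full diagonal, and crucially $(u,v,0^{|u|}) \in U^*$ forces $u = v$.

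The key move is to impose transitivity by a transitive closure taken \emph{at the level of the tree}: declare $(u,w,s) \in S$ iff there are $k \geq 1$, a sequence $u = v_0, v_1, \dots, v_k = w$ in ${}^{|u|}2$, and $s_1, \dots, s_k$ with $s_1 + \dots + s_k \leq s$ and $(v_{j-1}, v_j, s_j) \in U^*$ for all $j$. Then almost everything is immediate. $S$ is a tree, since all the witnessing data restrict coherently; $S$ is normal, thanks to the inequality in the weight condition; (ii) holds via the length-$1$ chain $u \to u$ with weight $s$; (iii) holds by concatenating a chain for $(u,v,s)$ with one for $(v,w,t)$, their combined weight being $\leq s+t$; (iv) holds because a chain of total weight $0^{|u|}$ has every link of weight $0^{|u|}$, hence every link diagonal, so $u = w$; and $R = p[U^*] \subseteq p[S]$ because $U^* \subseteq S$ via length-$1$ chains.

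The one genuinely delicate step — and the point I expect to be the main obstacle — is the reverse inclusion $p[S] \subseteq R$, i.e.\ that the tree-level transitive closure does not overshoot $R$. Given $(x,z,g) \in [S]$ one has, at each level $n$, a finite $U^*$-chain from $x\restriction n$ to $z\restriction n$ of bounded total weight, and one wants to assemble these into a single limit chain $x = \tilde v_0 \mathrel{R} \tilde v_1 \mathrel{R} \cdots \mathrel{R} \tilde v_k = z$, whence $x \mathrel{R} z$ by transitivity of $R$. When the minimal reduced chain lengths stay bounded in $n$ this is a clean König's lemma extraction, and one checks that this minimal length is non-decreasing in $n$, so boundedness is really the only issue; dealing with the case where the lengths grow without bound is exactly what forces a more careful encoding of the witnessing data and is the substance of \cite[Theorem~2.4]{louros}. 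Finally, property (iv), beyond the Louveau--Rosendal normal form, is just the bookkeeping from \cite{FriMot} that keeps zero witnesses diagonal throughout — already built into $U^*$ in the scheme above, and hence inherited by $S$.
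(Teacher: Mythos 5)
Your preprocessing is correct: $U_1$ is a normal tree projecting to $R$; the modified $U^*$ still projects to $R$ by the pointwise bump argument, contains the full diagonal, and satisfies the implication $(u,v,0^{|u|})\in U^*\Rightarrow u=v$; and your verification that the tree-level transitive closure $S$ is normal, satisfies (ii)--(iv), and contains $U^*$ (hence $R\subseteq p[S]$) is sound. This does correctly isolate (iv) as the contribution of \cite{FriMot}, secured at the level of $U^*$ and then inherited. You are also right that $p[S]\subseteq R$ is the whole ballgame and is the substance of \cite[Theorem 2.4]{louros} --- but the gap is worse than a pending K\"onig's lemma argument: the tree-level transitive closure is genuinely unsound, in the sense that there exist normal trees $U^*$ with $p[U^*]=R$ whose transitive closure $S$ satisfies $p[S]\supsetneq R$, so the unbounded-length case is not a technicality but a real obstruction.

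Concretely, let $y_a\in{}^\omega 2$ be given by $y_a(i)=1\iff i<a$, so that $y_a\to\mathbf{1}$ (the constant $1$ sequence), and let $R=\Delta\cup\{(y_a,y_b):a\le b\}$, a countable and hence analytic quasi-order with $(y_0,\mathbf{1})\notin R$. Take any normal tree $\bar U$ with $p[\bar U]=R$ that already satisfies your $U^*$-conditions, and enlarge it by adding, for every $a,n\in\omega$, the node $(y_a\restriction n,\,y_{a+1}\restriction n,\,e_a\restriction n)$ and its upward closure in the weight coordinate, where $e_a\in{}^\omega\omega$ is $1$ at position $a$ and $0$ elsewhere. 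Since $y_a$ and $y_{a+1}$ differ only at position $a$ and $e_a(a)=1$, these additions respect the weight condition and keep $U^*$ a normal tree, and they do not change the projection: the only new branches are $(y_a,y_{a+1},g)$ with $g\ge e_a$, together with diagonal branches, all lying over $R$. Yet for every $n$ the chain $y_0\restriction n,\,y_1\restriction n,\,\dots,\,y_n\restriction n=1^n$ with weights $e_0\restriction n,\dots,e_{n-1}\restriction n$ has total weight exactly $1^n$, so $(y_0\restriction n,1^n,1^n)\in S$, and hence $(y_0,\mathbf{1})\in p[S]\setminus R$. Here the minimal chain lengths grow linearly in $n$, and no K\"onig extraction can succeed because there simply is no finite $R$-chain from $y_0$ to $\mathbf{1}$. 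This is why \cite[Theorem 2.4]{louros} cannot just transitively close at the tree level; their construction has to carry the intermediate nodes along explicitly rather than existentially quantifying them away at each level. (For context: the paper itself does not reprove this proposition --- it cites \cite{louros} and \cite{FriMot} and moves on --- so deferring to those sources for (i)--(iii) is the right call; the reusable and correct part of your note is that (iv) can be arranged up front via $U^*$ and then propagates through any weight-respecting closure.)
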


A function $f \colon \seqo \to \seqo$ is said to be \emph{Lipschitz}
if $s \subseteq t \imp f(s) \subseteq f(t)$ and $|s| = |f(s)|$ for
each $s,t \in \seqo$. Consider now the space $\mathcal{T}$ of all
normal trees on $2 \times \omega$, and for $S,T \in \mathcal{T}$ put
$S \leq_{max} T$ if and only if there exists a Lipschitz function $f
\colon \seqo \to \seqo$ such that $(u,s) \in S \imp (u,f(s)) \in T$
for every $u \in {}^{<\omega}2$ and $s \in \seqo$ of the same length
(this in particular implies $p[S] \subseteq p[T]$).

\begin{theorem}[\cite{louros}]\label{theomax}
The quasi-order $\leq_{max}$ is complete for analytic quasi-orders.
\end{theorem}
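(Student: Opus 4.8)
The plan is to prove completeness by Borel reducing an arbitrary analytic quasi-order to $\leq_{max}$. Since every analytic quasi-order on a standard Borel space is Borel bireducible with an analytic quasi-order on ${}^\omega 2$, it suffices to treat an analytic quasi-order $R$ on ${}^\omega 2$. Fix, by Proposition~\ref{propnormalform}, a normal tree $S$ on $2 \times 2 \times \omega$ with $R = p[S]$ satisfying properties (i)--(iii). The idea is to turn the second coordinate of $S$ into a ``parameter'': to each $x \in {}^\omega 2$ associate the tree on $2 \times \omega$
\[
T_x = \{(v,s) \in {}^{<\omega}2 \times \seqo : |v| = |s| \text{ and } (v, x\restriction|v|, s) \in S\}.
\]
First I would check that $T_x$ really lies in the space $\mathcal{T}$: closure under restrictions is inherited from $S$, and $T_x$ is normal because if $(v,s) \in T_x$ and $s \le t$ have the same length, then normality of $S$ gives $(v, x\restriction|v|, t) \in S$, so $(v,t) \in T_x$. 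The assignment $x \mapsto T_x$ is continuous (membership of $(v,s)$ depends only on $x\restriction|v|$), hence a Borel map ${}^\omega 2 \to \mathcal{T}$.

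Next I would verify the reduction property $x \mathrel{R} y \iff T_x \leq_{max} T_y$. For the forward implication, fix $\alpha \in {}^\omega\omega$ witnessing $(x,y) \in p[S]$, i.e.\ $(x\restriction n, y\restriction n, \alpha\restriction n) \in S$ for all $n$, and set $f(s) = s + (\alpha\restriction|s|)$. This $f$ is Lipschitz, and if $(v,s) \in T_x$ then $(v, x\restriction|v|, s) \in S$, so combining with $(x\restriction|v|, y\restriction|v|, \alpha\restriction|v|) \in S$ via the transitivity property (iii) yields $(v, y\restriction|v|, s + \alpha\restriction|v|) \in S$, i.e.\ $(v, f(s)) \in T_y$; hence $f$ witnesses $T_x \leq_{max} T_y$.

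For the reverse implication, suppose a Lipschitz $f$ witnesses $T_x \leq_{max} T_y$. Here the reflexivity property (ii) is what does the work: for every $n$ and every $s$ of length $n$ we have $(x\restriction n, x\restriction n, s) \in S$, hence $(x\restriction n, s) \in T_x$, hence $(x\restriction n, f(s)) \in T_y$, i.e.\ $(x\restriction n, y\restriction n, f(s)) \in S$. Applying this with $s = 0^n$ and using that $f$ is Lipschitz, the finite sequences $f(0^n)$ ($n \in \omega$) form a chain with $|f(0^n)| = n$; letting $\alpha = \bigcup_n f(0^n)$ we get $(x\restriction n, y\restriction n, \alpha\restriction n) \in S$ for all $n$, so $(x,y) \in p[S] = R$.

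I do not expect a serious obstacle: the whole argument is a direct manipulation of the normal form. The only genuine design choices are (a) to freeze the \emph{second} coordinate of $S$ in the definition of $T_x$, so that the transitivity clause (iii) moves a witness ``from $x$ to $y$'' in the forward direction, and (b) to notice that the reflexivity clause (ii) is exactly what lets one feed the constant sequences $0^n$ into $T_x$ and thereby extract a branch witnessing $R$ from an arbitrary $\leq_{max}$-reduction. Checking the Lipschitz conditions and that $T_x$ is a normal tree is routine, and the continuity of $x \mapsto T_x$ is immediate.
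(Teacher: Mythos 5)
Your proof is correct and follows exactly the argument the paper sketches: the paper defines $S^x = \{(u,s) \mid (u, x\restriction|u|, s) \in S\}$, which is precisely your $T_x$, and asserts $x\,R\,y \iff S^x \leq_{max} S^y$ with the map $x \mapsto S^x$ continuous and injective. You have simply filled in the routine verification of the two implications (using transitivity clause (iii) for the forward direction and reflexivity clause (ii) for the backward direction), which the paper leaves to the reader with a pointer to the Louveau--Rosendal and Friedman--Motto Ros papers.
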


Indeed, consider an arbitrary analytic
quasi-order $R$ on ${}^\omega 2$ together with any tree $S$ satisfying
Proposition \ref{propnormalform}; then define for every $x \in
{}^\omega 2$ the normal tree $S^x = \{(u,s) \mid (u,x \restriction
|u|,s) \in S \}$.
It follows that $x\, R\, y$ if and only if $S^x \leq_{max} S^y$ (for $x,y
\in {}^\omega 2$).
Moreover, as discussed in \cite{FriMot}, the map which sends $x$ to $S^x$ is continuous and
injective.
Given a quasi-order $R$ on ${}^\omega 2$ and $x \in
{}^\omega 2$, from
now on we will denote by $S^x$ the normal tree defined as above.

\section{Construction of sufficiently rigid trees}

This section contains an adaptation of a construction from
\cite{FriMot}, which will be needed in the sequel. The possibility of
such a construction was already noted in \cite{FriMot}, but we will
explicitly give definitions and related proofs here for the sake of
completeness.
The symbol $\sqsubseteq $ will denote embeddability between countable
combinatorial trees (that is, connected acyclic graphs).

Let $\# \colon \seqo \to \omega$ be
any bijection and $\theta \colon {}^{<\omega }2\to\omega $ be a
bijection such that $|u|<|v|$ implies $\theta (u)<\theta (v)$.

Let $G_0$ be the combinatorial tree obtained from $ \seqo $ by
splitting each edge linking $s \restriction (|s|-1)$ to $s$ by inserting a new
vertex $s^*$ in between, for $s \in \seqo \setminus \{ \emptyset \}$.
Next, define a combinatorial tree $G_1$ by adding to $G_0$ new
vertices $s^+$, $s^{++}$, $(s^{++},i,j)$, for $s \in \seqo$, $0 \leq
j \leq i \leq \# s + 2$, and then link
 $s$ to $s^+$, $s^+$ to $s^{++}$, $s^{++}$ to $(s^{++},i,0)$, and
 $(s^{++},i,j)$ to $(s^{++},i,j+1)$ whenever these vertices are
 defined (see Figure \ref{G1}).

\begin{figure}\usetikzlibrary{calc}
\begin{tikzpicture}[grow'=up]
\node {$s \restriction (|s|-1)$}
 child {node {$s^*$}
   child {node (s) {$s$}
     child {node {$s^+$}
       child {node {$s^{++}$} [sibling distance=2cm]
         child {node {$(s^{++},0,0)$}}
         child {node {$(s^{++},1,0)$}
           child {node {$(s^{++},1,1)$}}}
         child {node {$(s^{++},2,0)$}
           child {node {$(s^{++},2,1)$}
             child {node {$(s^{++},2,2)$}}}}
         child {node {$(s^{++},3,0)$}
           child {node {$(s^{++},3,1)$}
             child {node {$(s^{++},3,2)$}
               child {node {$(s^{++},3,3)$}}}}}}}
     child[missing]
     child[missing]
     child {node {$(s^\smallfrown \langle 0 \rangle)^*$}
       child {node (dd) {$s^\smallfrown \langle 0 \rangle$}}}
     child {node (last*) {$(s^\smallfrown \langle 1 \rangle)^*$}
       child {node (last) {$s^\smallfrown \langle 1 \rangle$}}}
}};
\node (xx) at ($ (last) + (1.5,0)$) {$\dots$};
\node (xx*) at ($ (last*) + (1.5,0)$) {$\dots$};
\node () at ($ (last) + (0,1)$) {$\vdots$};
\node () at ($ (dd) + (0,1)$) {$\vdots$};
\end{tikzpicture}
\caption{\label{G1}A portion of $G_1$ around the unique $s$ such that $\# s =1$.}
\end{figure}

Given a normal tree $T$ on $2 \times \omega$, we define a
combinatorial tree $G_T$ starting from $G_1$
as follows: for every $u \in {}^{< \omega}2$ and $s \in \seqo$
such that $(u,s) \in T$, add to $G_1$ vertices $(u,s,x)$, where $x$ is an
initial subsequence of
$0^{2 \theta(u)+4}$ or $x = 0^{2 \theta(u)+2} {}^\smallfrown 1$, and
connect $(u,s,\emptyset)$ to $s$ and $(u,s,x)$ to  $(u,s,x')$ just in
case one of $x,x'$ is an immediate successor of the other. Now it is
easy to see how to reprove \cite[Theorem 3.1]{louros}, i.e.\ that $S
\leq_{max} T  \iff G_S \sqsubseteq G_T$. For one
direction, assume first that $S,T$ are normal trees on $2 \times
\omega$ such that
$S \leq_{max} T$: we claim that this can be witnessed by an injective Lipschitz
function $f
\colon \seqo \to \seqo$ such that $\forall s\in \seqo (\#s \leq\# f(s))$. To
see this, let $f' \colon \seqo \to \seqo$ be any Lipschitz function
witnessing $S \leq_{max} T$, and define inductively $f(s)$ to be the
$\leq_{lex}$-least $t$ such that:

\begin{itemize}
\item $t$ extends $f(s\restriction (|s|-1))$ if $|s|>0$,
\item $f'(s) \leq t$,
\item $f(s')\neq t$ for every $s' \in {}^{|s|} \omega$ such that $s'<_{lex} s$,
\item $\# s \leq \# t$.
\end{itemize}

If $f$ is as above,
then define the embedding $g$ from $G_S$ to $G_T$ by sending $s$ to
$f(s)$, $s^*$ to $f(s)^*$, $s^+$ to $f(s)^+$, $s^{++}$ to $f(s)^{++}$,
$(s^{++},i,j)$ to $(f(s)^{++}, i,j)$, and $(u,s,x)$ to
$(u,f(s),x)$. For the other direction, assume $S \neq \emptyset$.
Notice that all the points in
$G_S$ have valence $\leq 2$ except for those of the form $s \in
\seqo$ (which have valence $\omega$), $s^{++}$ (which have valence
$\#s +4$), and $(u,s,0^{2 \theta(u)+2})$ (which have valence
$3$). Moreover, the distance from $s$ to $s^{++}$ is always $2$
 and vertices of the form $(u,s,0^{2 \theta(u)+2})$
have odd distance (greater than $2$) from
vertices in $\seqo$, this distance being determined by
function $\theta $, which is increasing with respect to the length of
the argument.
So if $g$ is an embedding
of $G_S$ in $G_T$, it follows that
$g(\emptyset , \emptyset , 0^{2\theta(\emptyset)+2})  =  (\emptyset ,
\emptyset, 0^{2\theta(\emptyset)+2})$, and hence $g(\emptyset)  =  \emptyset$.
Now, by induction, conclude that
$f = g \restriction \seqo$ is such that $f(\emptyset) = \emptyset$,
${\rm range}(f) \subseteq \seqo$, and $f$
witnesses $S \leq_{max} T$, similarly as in the proof of \cite[Theorem
3.1]{louros}.

\begin{lemma}\label{lemma1}
 Let $S,T$ be normal trees on $2\times\omega $, and $G_S$ and $G_T$ be defined as before.
If $S \neq T$ then $G_S \not\cong G_T$.
\end{lemma}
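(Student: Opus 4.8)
The plan is to show that $G_S$ and $G_T$, being combinatorial trees, are isomorphic only if $S=T$, by extracting $S$ (and $T$) canonically from its associated graph. The key observation is that the construction of $G_T$ from $G_1$ only adds the ``decoration'' vertices $(u,s,x)$ attached to the copy of $\seqo$ inside $G_1$, and does so exactly along the pairs $(u,s)\in T$; thus an isomorphism $G_S\cong G_T$ must respect the rigid skeleton $G_1$ and therefore read off which pairs $(u,s)$ belong to each tree.

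First I would isolate the rigidity of the skeleton. As already noted in the discussion preceding the lemma, the valence pattern pins down the vertices: points of the form $s\in\seqo$ have valence $\omega$, points $s^{++}$ have valence $\#s+4$, and points $(u,s,0^{2\theta(u)+2})$ have valence $3$, while everything else has valence $\le 2$. Moreover the distance from $s$ to $s^{++}$ is always $2$, and a vertex $(u,s,0^{2\theta(u)+2})$ sits at odd distance (determined by $\theta(u)$, hence by $|u|$) from its base vertex $s\in\seqo$. Running the same argument used above for $S\leq_{max}T$, any isomorphism $g\colon G_S\to G_T$ must fix the root: $g(\emptyset,\emptyset,0^{2\theta(\emptyset)+2})=(\emptyset,\emptyset,0^{2\theta(\emptyset)+2})$, so $g(\emptyset)=\emptyset$; and then by induction along the tree order $g$ restricts to a length-preserving, inclusion-preserving bijection $f=g\restriction\seqo$ of $\seqo$ with $f(\emptyset)=\emptyset$. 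The crucial extra point — compared with the one-directional argument above — is that since $g$ is an \emph{isomorphism}, $f$ must be \emph{onto} $\seqo$ and in fact the identity: an easy induction on $|s|$ shows that at each level $f$ permutes ${}^{|s|+1}\omega$, but the combinatorial tree $G_1$ attaches to each $s$ a distinguished finite ``gadget'' through $s^+,s^{++},(s^{++},i,j)$ whose size $\#s+4$ determines $\#s$, hence determines $s$ itself; so $f(s)$ must have $\#f(s)=\#s$, forcing $f=\mathrm{id}$.

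Once $f=\mathrm{id}$, I would read off the trees. For each $s\in\seqo$, the vertex $s$ in $G_S$ carries, besides its fixed gadget and its $\seqo$-neighbors $s^*$, $(s^\smallfrown\langle i\rangle)^*$, exactly one pendant chain $(u,s,\emptyset),(u,s,\langle 0\rangle),\dots$ for each $u\in{}^{<\omega}2$ with $(u,s)\in S$; these chains are distinguishable from the gadget and from the tree-edges of $\seqo$ by length and by the valence-$3$ branching point $(u,s,0^{2\theta(u)+2})$, and the label $u$ is recovered from the odd distance of that branching point to $s$ together with whether the chain ends in an extra vertex $0^{2\theta(u)+2}{}^\smallfrown 1$ — i.e.\ $\theta(u)$ is coded in the distance and then $u$ is recovered since $\theta$ is a bijection. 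Applying $g$, which fixes $s$ and sends such a chain in $G_S$ to a chain of the same combinatorial type at $s$ in $G_T$, we get $(u,s)\in S\iff (u,s)\in T$ for all $u,s$, hence $S=T$, contradicting $S\neq T$.

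The main obstacle I expect is the bookkeeping in the induction that forces $f=\mathrm{id}$ rather than merely a levelwise permutation: one must argue carefully that the finite gadget hanging off $s$ (the vertices $s^+$, $s^{++}$, and the ``staircase'' $(s^{++},i,j)$ for $0\le j\le i\le\#s+2$) is combinatorially rigid and of a size that uniquely encodes $\#s$, so that an isomorphism cannot move $s$ to some $t$ with $\#t\ne\#s$; and then that, level by level, the constraint $f(s)\leq_{?}$-compatible with inclusion plus $\#f(s)=\#s$ leaves no freedom. The rest — identifying valences, distances, and the $\theta$-coded pendant chains — is routine once the skeleton is shown to be rigid, and essentially parallels the argument already sketched before the lemma statement.
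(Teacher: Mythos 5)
Your proposal is correct and takes essentially the same approach as the paper: use the fact that $s^{++}$ has valence $\#s+4$ (with $\#$ a bijection) to force any isomorphism $G_S\cong G_T$ to fix $\seqo$ pointwise, then recover each tree from the $\theta(u)$-keyed pendant chains. The paper gets to $i\restriction\seqo=\mathrm{id}$ a bit more directly --- $s^{++}$ is the \emph{unique} vertex of either graph with valence $\#s+4$, so it is fixed outright without the levelwise induction you sketch --- but the core argument is identical.
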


\begin{proof}
 Assume that $i$ is an isomorphism between $G_S$ and
$G_T$, in order to show $S\subseteq T$ (and symmetrically $T\subseteq S$). Since $i$ preserves distances and valences, each $s^{++}$
must be mapped to itself, as it is the unique point in both
graphs with valence $\# s +4$. Therefore also $s$ is mapped to
itself, as it is the unique point (in both graphs) which is at
distance $2$ from $s^{++}$ and has valence $\omega$. Therefore $i
\restriction \seqo$ is the identity. Suppose now $(u,s) \in S$:
 as in the proof of \cite[Theorem 3.1]{louros}, the point
 $(u,s,0^{2\theta(u)+2})$  must be sent by $i$ to $(u,i(s),0^{2\theta(u)+2}) =
 (u,s,0^{2 \theta(u)+2})$, which means that $(u,s) \in T$. Hence $S
 \subseteq T$.
\end{proof}

From now on $ \L $ will be the language with a single binary relation
symbol. Then each $G_T$ defined as above will be viewed as an element
of $Mod_{ \L }$ (it is easy to see that each $G_T$ can be coded
Borel-in-$T$ as a graph on $\omega$). We shall denote by $\G$ the set
of all such $G_T$. The following corollary gathers two important
consequences of Lemma \ref{lemma1}.
\begin{corollary}\label{mathbbg}
\begin{enumerate}[1)]
\item $\G $ is a Borel subset of $Mod_{ \L }$, so it is a standard Borel space.
\item On $\G$ equality and isomorphism coincide.
\end{enumerate}
\end{corollary}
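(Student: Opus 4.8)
The plan is to prove both parts by exploiting the explicit, combinatorial definition of $G_T$ together with Lemma \ref{lemma1}.

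For part (2) I would argue as follows. We already know that $\G$ is the set of graphs of the form $G_T$ for $T$ a normal tree on $2\times\omega$. If $G_S\cong G_T$ (as abstract graphs, i.e.\ as elements of $Mod_\L$ identified up to isomorphism), then the contrapositive of Lemma \ref{lemma1} gives $S=T$, hence $G_S=G_T$ as concrete graphs on $\omega$ — provided the coding $T\mapsto G_T$ is chosen so that equal trees give literally equal codes, which is part of the ``coded Borel-in-$T$'' setup. Conversely, equal elements are trivially isomorphic. So on $\G$ the relations of equality and isomorphism coincide. The only subtlety to mention is that Lemma \ref{lemma1} is stated for normal trees $S,T$, and one needs the bookkeeping that the map $T\mapsto (\text{code of }G_T)$ is injective on normal trees; this is immediate from the explicit recipe, since from the vertex set and edge relation of $G_T$ one recovers (via distances and valences, exactly as in the proof of Lemma \ref{lemma1} with $i$ the identity) which pairs $(u,s)$ lie in $T$.

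For part (1), I would show that $\G$ is Borel by writing down a Borel condition on $x\in Mod_\L$ expressing ``$x$ is (a code for) $G_T$ for some normal tree $T$ on $2\times\omega$''. The natural approach: first note that the map $T\mapsto G_T$ from the Polish space $\mathcal T$ of normal trees on $2\times\omega$ into $Mod_\L$ is Borel (indeed continuous once we fix the bijections $\#,\theta$ and a canonical way of listing the vertices of $G_T$ as natural numbers), and injective by part (2)/Lemma \ref{lemma1}. Then $\G$ is the injective Borel image of a standard Borel space, and by the Lusin–Souslin theorem (injective Borel images of Borel sets are Borel) $\G$ is Borel, hence a standard Borel space in the inherited structure. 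Alternatively, and perhaps cleaner for the paper's style, one can give a direct first-order-over-arithmetic description: a graph $x$ belongs to $\G$ iff it has a (necessarily unique) vertex playing the role of $\emptyset$ — recognizable as the unique vertex of valence $\omega$ whose ``$s^{++}$-neighbour two steps away'' has valence exactly $4$ — and from that root the whole skeleton $G_1$ embeds canonically, and the remaining vertices are exactly the strings $(u,s,x)$ attached in the prescribed pattern with the distances dictated by $\theta$; each of these requirements is arithmetical in $x$, hence Borel. Finally one checks that the tree $T$ read off this way (namely $(u,s)\in T$ iff the vertex $(u,s,0^{2\theta(u)+2})$ is present) is normal and that $G_T\cong x$.

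The main obstacle is purely one of bookkeeping rather than mathematics: making sure that the ad hoc coding of the combinatorial tree $G_T$ as a point of $Mod_\L$ is chosen uniformly enough that (a) $T\mapsto G_T$ is genuinely Borel and injective on the nose, and (b) the membership condition ``$x\in\G$'' unwinds to an honest Borel (indeed arithmetical) formula. Both reduce to the rigidity phenomenon already exploited in Lemma \ref{lemma1} — the special vertices $s$, $s^{++}$, $(u,s,0^{2\theta(u)+2})$ are identifiable by valence and distance — so once that recognition procedure is in place, everything follows. I would therefore present part (1) via the Lusin–Souslin route to keep the argument short, relegating the ``definability of the pattern'' verification to a remark, and derive part (2) directly from Lemma \ref{lemma1} as above.
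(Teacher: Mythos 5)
Your proposal is correct and takes exactly the route the paper intends: part (2) is the contrapositive of Lemma \ref{lemma1} (plus the triviality that the coding $T\mapsto G_T$ is a function), and part (1) follows because $T\mapsto G_T$ is a Borel injection from the standard Borel space $\mathcal T$ of normal trees on $2\times\omega$ into $Mod_\L$, so its range $\G$ is Borel by Lusin--Souslin. The paper states the corollary without proof as an immediate consequence of Lemma \ref{lemma1}, and your reconstruction, including the bookkeeping remark about the coding, is the natural unwinding of that.
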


The restrictions to $ \G $ of binary relations defined on combinatorial trees (like equality or embeddability) will often be denoted with a subscript $ \G $.

\begin{lemma}\label{lemma2}
 For every distinct $p,q \in S_\infty$ and every  normal
 tree $T$ on $2 \times
 \omega$, we have $j_\L(p,G_T) \neq j_\L(q,G_T)$.
\end{lemma}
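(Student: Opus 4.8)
The plan is to show that the logic action of $S_\infty$ on the point $G_T \in \G$ is free, i.e.\ that distinct group elements move $G_T$ to distinct structures; equivalently, that the only $p \in S_\infty$ with $j_\L(p, G_T) = G_T$ is $p = \mathrm{id}$. Indeed, suppose $j_\L(p, G_T) = j_\L(q, G_T)$; then $j_\L(q^{-1}p, G_T) = G_T$, so once we know the stabilizer of $G_T$ is trivial we get $q^{-1}p = \mathrm{id}$, i.e.\ $p = q$, contradicting $p \neq q$. So it suffices to prove that $G_T$ has trivial automorphism group as a graph on $\omega$.

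To see this, I would argue exactly as in the proof of Lemma \ref{lemma1}, but applied with $S = T$: the identity map is certainly an isomorphism of $G_T$ with itself, and the argument there shows more, namely that it is the \emph{only} one. Concretely, let $i$ be any automorphism of $G_T$. Since $i$ preserves the graph distance and the valence of each vertex, and each vertex $s^{++}$ (for $s \in \seqo$) is the unique vertex of $G_T$ of valence $\#s + 4$, we must have $i(s^{++}) = s^{++}$ for every $s$. Then each $s \in \seqo$ is the unique vertex of $G_T$ at distance $2$ from $s^{++}$ having valence $\omega$, so $i(s) = s$; thus $i \restriction \seqo$ is the identity. The intermediate vertices $s^*$, $s^+$, $(s^{++},i,j)$ lie on finite paths uniquely determined by their already-fixed endpoints, so they are fixed too. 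Finally, for $(u,s) \in T$, the vertex $(u,s,0^{2\theta(u)+2})$ is the unique vertex of valence $3$ at the appropriate odd distance (determined by the increasing function $\theta$) from the fixed vertex $s \in \seqo$ along the branch attached at $s$; hence it, and consequently the whole finite path $(u,s,x)$ hanging off $s$, is fixed by $i$. Therefore $i$ is the identity on all of $G_T$.

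The main point to be careful about — though it is really just a matter of unwinding the definition of $G_T$ — is that the various "decorations" attached at a common base vertex $s \in \seqo$ (the chain through $s^+, s^{++}$ and its pendant combs $(s^{++},i,j)$, on one hand, and the branches $(u,s,x)$ for the various $u$ with $(u,s) \in T$, on the other) are pairwise distinguishable and individually rigid: the $s^{++}$-gadget is pinned down by the valence $\#s+4$, while the branches $(u,s,x)$ are pinned down by the valence-$3$ vertex occurring at distance $2\theta(u)+3$ from $s$, and $\theta$ being injective (and increasing in length) these distances are all distinct. Once each such gadget is fixed setwise with a fixed "root" on the $s$-side, rigidity of each finite path/comb forces $i$ to fix every vertex. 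This gives $\mathrm{Aut}(G_T) = \{\mathrm{id}\}$, hence the stabilizer of $G_T$ under $j_\L$ is trivial, which is exactly the assertion $j_\L(p, G_T) \neq j_\L(q, G_T)$ for $p \neq q$.
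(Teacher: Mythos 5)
Your proof is correct, and it is essentially the paper's argument in a slightly different wrapping. Where the paper works directly with two permutations $p,q$ and an auxiliary observation (if $p,q$ agree on the endpoints of a path in the tree $G_T$, they agree on its interior), you instead factor through $q^{-1}p$ and show the stabilizer of $G_T$ under $j_\L$ is trivial, i.e.\ that $G_T$ is rigid; this reverses the logical order of the paper, which derives rigidity (Corollary~\ref{cornontrivial}) \emph{from} Lemma~\ref{lemma2} rather than the other way around, but since the two statements are equivalent the reorganization is harmless. The substance — pinning down $s^{++}$ by its valence $\#s+4$, then $s$ by valence $\omega$ and distance~$2$, then the branch vertices $(u,s,0^{2\theta(u)+2})$ by valence~$3$ and distance $2\theta(u)+3$ from $s$ (using injectivity of $\theta$), and finally propagating to all remaining vertices by rigidity of the pendant paths in an acyclic graph — is exactly the same. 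One minor quibble: in your first pass you write that the vertices $(s^{++},i,j)$ ``lie on finite paths uniquely determined by their already-fixed endpoints,'' but at that stage the leaf $(s^{++},i,i)$ is not yet known to be fixed; the correct justification (which you do supply in the following paragraph) is that these pendant paths have pairwise distinct lengths $1,\dotsc,\#s+3$, so an automorphism fixing $s^{++}$ cannot permute them.
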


\begin{proof}
 Assume that $j_\L(p,G_T) =
j_\L(q,G_T)$: we want to show that in this case $p=q$. To see this first
check that if $(g_0, \dotsc ,g_n)$ is a path in $G_T$,
and $p(g_0) = q(g_0)$ and $p(g_n) = q(g_n)$,
then $p(g_k) = q(g_k)$ for every $k \leq n$ (using the acyclicity of
$G_T$). Therefore, in our case it will be enough to show that
$p$ and $q$ coincide on vertices of the form $\emptyset$,
$(s^{++},i,i)$, $(u,s,0^{2 \theta(u)+4})$  and $(u,s,0^{2 \theta(u)+2}
{}^\smallfrown 1)$, and this amounts to showing that each of these points is
the unique element of $G_T$ which satisfies a certain  property that
can be expressed in terms of valence and distance. First note that $p$
and $q$ must coincide on vertices of the form $s^{++}$, as these are
the unique points with valence $\# s+4$: therefore we get that $p$ and
$q$ must coincide also on elements of the form $s \in \seqo$ as $s$ is
the unique point of $G_T$ which has valence $\omega$ and is at
distance $2$ from $s^{++}$ (in particular we have $p(\emptyset) =
q(\emptyset)$). Then $p$ and $q$ must coincide on elements of the form
$(s^{++},i,i)$, as these are the unique points of $G_T$ with
valence $1$, distance $i+1$ from $s^{++}$ and distance $i+3$ from $s$,
and on elements of the form $(u,s,0^{2 \theta(u)+2})$ because these
are the unique points with valence $3$, distance $2\theta(u)+3$ from
$s$, and greater distance from any $s' \in \seqo$ distinct from
$s$. Finally, $p$ and $q$ must coincide on elements of the form
$(u,s,0^{2 \theta(u)+4})$ or $(u,s,0^{2 \theta(u)+2} {}^\smallfrown
1)$, as these are the unique points with valence $1$ which are at
distance $2$ and $1$, respectively, from $(u,s,0^{2 \theta(u)+2})$.
\end{proof}

\begin{corollary}\label{cornontrivial}
 Let $T$ be a normal tree on $2 \times \omega$. Then $G_T$ is rigid (i.e. its unique automorphism is the identity).
\end{corollary}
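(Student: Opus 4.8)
The plan is to derive this immediately from Lemma \ref{lemma2}. First I would observe that, once $G_T$ is viewed as an element of $Mod_\L$ with universe $\omega$, an automorphism of $G_T$ is exactly a permutation $p \in S_\infty$ such that $j_\L(p, G_T) = G_T$. Since the identity permutation obviously satisfies $j_\L(\mathrm{id}, G_T) = G_T$, any automorphism $p$ gives $j_\L(p, G_T) = j_\L(\mathrm{id}, G_T)$. By the contrapositive of Lemma \ref{lemma2} (applied with $q = \mathrm{id}$), this forces $p = \mathrm{id}$, which is precisely the assertion that $G_T$ is rigid.

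The only points that require a word of care are bookkeeping issues rather than genuine difficulties. One should note that Lemma \ref{lemma2} is stated for \emph{every} normal tree $T$ on $2 \times \omega$, including $T = \emptyset$ (in which case $G_T = G_1$), so no case is left out. One should also note that the correspondence between automorphisms of the combinatorial tree $G_T$ and elements of $S_\infty$ fixing the coded structure is exactly the identification used implicitly in the statement and proof of Lemma \ref{lemma2}, whose argument is phrased entirely in terms of the graph-theoretic notions of valence and distance and hence is insensitive to the coding.

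Alternatively, and just as cleanly, one can re-run the argument of Lemma \ref{lemma2} (or of Lemma \ref{lemma1} with $S = T$) directly: an automorphism must fix each vertex $s^{++}$, being the unique vertex of valence $\#s + 4$; hence it fixes each $s \in \seqo$, being the unique vertex of valence $\omega$ at distance $2$ from $s^{++}$; and then it fixes each vertex of the form $(s^{++},i,j)$ and $(u,s,x)$ since, as recorded in the proofs above, these are determined among their neighbours by valence and distance data. I do not expect any real obstacle here; the substance of the corollary is entirely contained in Lemma \ref{lemma2}, and the write-up is essentially a one-line deduction.
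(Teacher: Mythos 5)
Your proof is correct and coincides with the paper's (implicit) argument: the corollary is an immediate consequence of Lemma \ref{lemma2} applied with $q = \mathrm{id}$, as you observe. The paper gives no written proof because the deduction is exactly the one-liner you describe.
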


\section{The main theorem}\label{sectionsaturation}

Theorem \ref{theorsaturation} below constitutes our main tool for
proving invariant universality of analytic quasi-orders.
To establish it, we have to deal with the problem of $E$-saturating in
a Borel way
the range of a reduction between embeddability on $\G$ and an
analytic quasi-order $S$ with $E$ an equivalence subrelation of
$E_S$.

First we need the following
technical lemma, which is essentially a reformulation of
\cite[Theorem 1.2.4]{beckerkechris}.
Let $Y$ be a Polish group and $Z$ a closed subgroup of $Y$.
By a theorem of Burgess (see \cite[Theorem 12.17]{Kechris1995}),
there is a Borel selector
$s\colon Y \to Y$ for the equivalence relation on $Y$ whose
classes are the (left) cosets of
$Z$, and, consequently, a Borel transversal $T= \{ y \in Y \mid
s(y)=y \} = {\rm range}(s)$ for the same
equivalence relation.
Next lemma is a parametrized version of Burgess' theorem.

\begin{lemma}\label{lemmaBurgess}
Let $X$ be a standard Borel space, $Y$ a Polish group and $\Sigma
\colon X \to G(Y)$ a Borel map. Then there is a Borel function $s \colon X \times
Y \to Y$ such that $s_x \colon Y \to Y  \colon y \mapsto s(x,y)$ is a
selector for the equivalence relation $E_x$ whose classes are the (left) cosets of $\Sigma(x)$. Therefore $T = \{ (x,y) \in X \times Y \mid s_x(y)
= y \}$ is Borel as well, and $T_x = \{ y \in Y \mid (x,y) \in T \}$ is
a Borel transversal for $E_x$.
\end{lemma}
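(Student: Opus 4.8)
The plan is to reduce the parametrized statement to the non-parametrized Burgess theorem by coding the data $(x,y)$ into a single element of a fixed Polish group, exactly as in the proof of \cite[Theorem 1.2.4]{beckerkechris}. First I would form the Polish group $H = X' \times Y$ for a suitable Polish group structure, where $X'$ is obtained from $X$: since $X$ is merely a standard Borel space, I would fix a Polish topology on $X$ compatible with its Borel structure in which $X$ becomes a Polish \emph{group} — for instance, by identifying $X$ Borel-isomorphically with a Borel subset of ${}^\omega 2$ and then with a closed subgroup of a larger Polish group, or simply working with $X' = {}^\omega 2$ (or ${}^\omega\omega$) as an additive Polish group containing a Borel copy of $X$. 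In this ambient group $H$, I would like to view the family $\{\Sigma(x)\}_{x \in X}$ as (essentially) the fibers of a single closed subgroup of $H$, so that a Borel selector for the cosets of that subgroup yields, fiberwise, selectors for the $E_x$.

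The key technical point is that the set $\{(x,y) : y \in \Sigma(x)\} \subseteq X \times Y$ is Borel (because $\Sigma$ is Borel and the relation "$y$ belongs to the closed subgroup coded by $z \in G(Y)$" is Borel in $(z,y)$), but it is generally not a \emph{closed} subgroup of $H = X \times Y$, only a Borel subgroup — so I cannot apply Burgess' theorem directly to it. The standard fix, and the hard part of the argument, is to pass to a finer Polish group topology: by a result on Borel subgroups (Becker–Kechris, using the fact that a Borel subgroup of a Polish group with a Borel set of cosets admitting a Borel selector can be made closed after refining the topology), one refines the Polish topology on $H$, keeping the same Borel structure, so that $G := \{(x,y) : y \in \Sigma(x)\}$ becomes closed in the new topology while $H$ remains a Polish group. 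Concretely, one adds to the topology of $H$ the Borel set $G$ and its translates; Becker–Kechris \cite[§1.2]{beckerkechris} show this can be arranged so that $H$ stays Polish. Then Burgess' theorem applies to the closed subgroup $G \leq H$.

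Having done this, I would let $\bar s \colon H \to H$ be a Borel selector for the left cosets of $G$ in $H$, and I would define $s(x,y)$ by extracting the $Y$-coordinate of an appropriate translate: note that the left coset of $G$ containing $(x,y)$ is $(x,y) \cdot G$, whose elements all have the same first coordinate $x$ (since $G \subseteq \{x\} \times Y$ fiberwise is not quite a subgroup — here one must be slightly careful and instead take $G$ to sit inside $X \times Y$ with $X$ acting trivially, so cosets respect the $X$-fiber), and whose $Y$-coordinates form exactly the left coset $y \cdot \Sigma(x)$ of $\Sigma(x)$ in $Y$. Thus setting $s(x,y) := \pi_Y(\bar s(x,y))$ gives a Borel map with $s_x$ a selector for $E_x$, and then $T = \{(x,y) : s_x(y) = y\} = \{(x,y) : \bar s(x,y) = (x,y)\}$ is Borel, with each $T_x$ a Borel transversal for $E_x$.

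The main obstacle, as indicated, is purely the topological refinement step: verifying that one can refine the Polish topology on $X \times Y$ so that the Borel subgroup $G$ becomes closed while preserving Polishness and the ambient group structure. This is precisely the content of \cite[Theorem 1.2.4]{beckerkechris} (and the surrounding machinery on Borel subgroups and change of topology in Polish groups), so I would invoke that result rather than reprove it; everything else — Borelness of $G$, Burgess' theorem on the refined group, and the projection/unpacking — is routine.
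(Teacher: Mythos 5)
Your proposal contains a genuine and fatal gap: the set $G = \{(x,y) : y \in \Sigma(x)\}$ is \emph{not a subgroup} of $H = X' \times Y$ for any natural group structure on $X'$. For $G$ to be a subgroup we would need that whenever $(x_1,y_1),(x_2,y_2)\in G$ we have $y_1y_2 \in \Sigma(x_1 x_2)$, which fails already because $\Sigma$ is an arbitrary Borel map (and because $x_1 x_2$ will typically not even lie in the Borel copy of $X$ inside $X'$). You flag this with the parenthetical ``here one must be slightly careful,'' but the caveat does not resolve it: if $X$ ``acts trivially,'' i.e.\ $X'$ is a one-point group, then $H = Y$ and you have lost the parameter; and with any nontrivial $X'$, $G$ has no reason to be closed under the group operation. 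Without $G$ being a subgroup, there are no ``left cosets of $G$'' and Burgess' theorem does not apply. The second step --- refining the Polish group topology on $H$ to make a Borel \emph{subgroup} clopen --- is also more delicate than you suggest: the standard change-of-topology results let you make a Borel \emph{set} clopen at the cost of the group operations no longer being continuous, and a Borel subgroup of a Polish group need not admit a compatible finer Polish group topology on the ambient group in which it is closed. So even if the first objection were repaired, this step would need a genuine argument rather than a citation.

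The paper's proof avoids groups-of-pairs entirely and is much more elementary. It writes $s$ as a composition $s = d\circ f_1\circ f_0$, where $f_0(x,y) = (\Sigma(x),y)$, $f_1(G,y)=yG\in F(Y)$, and $d\colon F(Y)\to Y$ is a Borel choice function picking a point from each nonempty closed set (Kuratowski--Ryll-Nardzewski). The only thing to check is that $f_1$ is Borel, which follows from the criterion
\[
yG = F \iff \forall n\,\bigl(y\,d_n(G)\in F \;\wedge\; y^{-1}d_n(F)\in G\bigr)
\]
for a fixed sequence of Borel dense selectors $d_n\colon F(Y)\to Y$. Thus $s(x,y) = d(y\,\Sigma(x))$ is a Borel selector with $s_x$ constant on each left coset of $\Sigma(x)$, and the transversal $T=\{(x,y): s(x,y)=y\}$ is Borel. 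The role of Burgess' theorem in the paper is only to supply the non-parametrized model; the parametrized version is obtained by this direct uniformization, not by packaging $X$ and $Y$ into a single group.
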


\begin{proof}
Define $s = d \circ f_1 \circ f_0$ where:
\begin{itemize}
\item $f_0 \colon X\times Y \to G(Y)\times Y$ is defined by
  $f_0(x,y)=(\Sigma(x),y)$;
\item $f_1 \colon G(Y)\times Y \to F(Y)$ is defined by $f_1(G,y)=yG$;
\item $d \colon F(Y)\to Y$ is a Borel function such that $\forall F
  \in F(Y) \setminus \{\emptyset\}\ (d(F)\in F)$.
\end{itemize}

  It is clear that each $s_x$ is a selector for $E_x$. To see that $s$ is Borel, simply notice that $f_0$ is Borel because $\Sigma$ is such, while $f_1$ is Borel because if
$\langle d_n \mid n \in \omega \rangle$ is any sequence of Borel
functions from $F(Y)$ into $Y$ such that $\langle d_n (F) \mid n \in
\omega \rangle$ is a dense subset of every $F \in F(Y) \setminus \{ \emptyset \}$, then
\[
yG=F \iff \forall n \in \omega (y d_n(G) \in F \land y^{-1} d_n (F) \in G). \qedhere
\]
\end{proof}

\begin{theorem}\label{theorsaturation}
Let $S$ be an analytic quasi-order on a standard Borel space $Z$, and
$E \subseteq E_S$  be an analytic equivalence relation on the same
space. Suppose there exists a Borel function $f \colon \G \to Z$ which
simultaneously witnesses ${\sqsubseteq_\G} \leq_B S$ and ${=_\G} \leq_B
E$ (which is the same as ${\cong_\G} \leq_B E$). Furthermore, let $Y$ be a
Polish group, $a$ a Borel action of $Y$ on a standard Borel space $W$,
and $g \colon Z \to W$ witness $E \leq_B E_a$.

Consider the map $\Sigma \colon \G \to G(Y)$ which assigns to $G \in \G$ the
stabilizer of $(g \circ f)(G)$ with respect to $a$, i.e.\
\[
\Sigma(G) = \{y \in Y \mid a(y, (g \circ f)(G)) = (g \circ f)(G)\}.
\]

If $\Sigma$ is Borel, then the pair $(S,E)$ is invariantly universal.
\end{theorem}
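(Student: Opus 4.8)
The plan is to combine the completeness of embeddability on $\G$ (which follows from Theorem~\ref{theomax}, Lemma~\ref{lemma1}, and the discussion preceding Lemma~\ref{lemma1}, since $S \leq_{max} T \iff G_S \sqsubseteq G_T$ and the normal trees $S^x$ give a reduction of any analytic quasi-order $R$ on ${}^\omega 2$ to $\leq_{max}$) together with a Borel $E$-saturation argument for the range of $f$. Fix an arbitrary analytic quasi-order $R$; by the remarks in Section~2 we may assume $R$ is a quasi-order on ${}^\omega 2$, and we have a continuous injection $x \mapsto S^x$ with $x\, R\, y \iff S^x \leq_{max} S^y$, hence a Borel (indeed continuous, using that $T \mapsto G_T$ is Borel) reduction $h \colon {}^\omega 2 \to \G$ of $R$ to $\sqsubseteq_\G$ which is also a reduction of $E_R$ to $=_\G$ (by Lemma~\ref{lemma1}, $S^x = S^y \iff G_{S^x} \cong G_{S^y} \iff G_{S^x} = G_{S^y}$ on $\G$, and $S^x = S^y \iff x = y$ by injectivity, but what we actually need is that distinct $R$-classes go to distinct points, which is immediate). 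Composing with $f$ we get a Borel map $f \circ h \colon {}^\omega 2 \to Z$ reducing $R$ to $S$ and $E_R$ to $E$. The point is now to replace the (possibly non-Borel, certainly not $E$-invariant) range of $f \circ h$ by a genuinely Borel $E$-invariant set $B$ on which $S$ restricts to something bireducible with $R$.

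The key idea for the saturation is to use the action $a$ of $Y$ on $W$ together with Lemma~\ref{lemmaBurgess}. Since $g$ reduces $E$ to $E_a$, two points $z, z' \in \mathrm{range}(f)$ are $E$-equivalent exactly when $(g(z), g(z'))$ lie in the same $a$-orbit, i.e.\ when $g(z') = a(y, g(z))$ for some $y \in Y$; and the stabilizer of $g(f(G))$ is exactly $\Sigma(G)$, a closed subgroup of $Y$ depending in a Borel way on $G$ by hypothesis. Apply Lemma~\ref{lemmaBurgess} with $X = \G$, the Polish group $Y$, and the Borel map $\Sigma$, to obtain a Borel $s \colon \G \times Y \to Y$ such that $s_G$ selects a representative from each left coset of $\Sigma(G)$, with Borel transversal $T_G$. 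I would then define the $E$-saturation of $\mathrm{range}(f)$ in a Borel way roughly as follows: a point $z \in Z$ should be put into $B$ iff it is $E$-equivalent to some $f(G)$, and we can detect this because $g(z)$ must then be $a(y, g(f(G)))$ for exactly one $y \in T_G$ (the coset structure of $\Sigma(G) = \mathrm{Stab}(g(f(G)))$ makes this $y$ unique once we use the transversal), so the relation ``$\exists G \exists y \in T_G\ g(z) = a(y, g(f(G)))$'' can be analyzed into something Borel by exploiting uniqueness. More precisely, one shows that the partial map sending such a $z$ to the pair $(G, y)$ is Borel on its (Borel) domain, and then $B$ is the $E$-saturation of $\mathrm{range}(f \circ h)$ intersected appropriately — but the cleanest route is: let $B_0 = \mathrm{range}(f \circ h)$ would not be Borel, so instead directly define $B = \{z \in Z \mid$ the unique $(G, y)$ with $G \in \G$, $y \in T_G$, $g(z) = a(y, g(f(G)))$ has $G \in \mathrm{range}(h)\}$; since $\mathrm{range}(h)$ is Borel ($h$ being continuous and injective on a Polish space, or at worst one restricts to an $R$-invariant Borel set computing $R$ correctly), $B$ is Borel, and it is $E$-invariant by construction since membership depends only on the $E_a$-orbit of $g(z)$.

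Having produced a Borel $E$-invariant $B$, the last step is to check that $S \restriction B \sim_B R$. On one hand, $f \circ h$ maps ${}^\omega 2$ into $B$ and reduces $R$ to $S$, so $R \leq_B S \restriction B$. Conversely, every $z \in B$ is $E$-equivalent, hence $E_S$-equivalent (as $E \subseteq E_S$), to a point of the form $f(h(x))$ for a unique $R$-class of $x$; the map $z \mapsto$ (that $x$, or rather a Borel selector picking one) is Borel on $B$ by the analysis above and witnesses $S \restriction B \leq_B R$, because $z\, S\, z' \iff f(h(x))\, S\, f(h(x')) \iff x\, R\, x'$ using that $S$ is a quasi-order with $E \subseteq E_S$ and that $f \circ h$ is a reduction. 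Therefore $S \restriction B \sim_B R$, and since $R$ was an arbitrary analytic quasi-order and $B$ is Borel $E$-invariant, $(S, E)$ is invariantly universal. The main obstacle I anticipate is the second paragraph: verifying that the ``$E$-saturation detected via the transversal'' is genuinely Borel and that the associated selection map $z \mapsto (G, y)$ is Borel. This requires the uniqueness coming from Lemma~\ref{lemmaBurgess} (so that no uncountable union of non-Borel fibers creeps in) and a careful application of the fact that a function with Borel graph between standard Borel spaces is Borel, together with the rigidity of the $G_T$'s (Corollary~\ref{cornontrivial}) and Lemma~\ref{lemma2}, which are presumably exactly what is needed to pin down $G$ and control the stabilizer computation so that distinct elements of $\G$ are not conflated and the orbit of $g(f(G))$ genuinely determines $G$.
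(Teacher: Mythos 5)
Your overall strategy is the same as the paper's: fix an analytic quasi-order $R$ on ${}^\omega 2$, use the normal trees $S^x$ and the map $G_{S^x}$ to land in $\G$, push forward with $f$, and then build a Borel $E$-invariant $C$ by $E$-saturating $f(\{G_{S^x}\})$, detecting membership in the saturation through the unique pair $(G,y)$ provided by the transversal from Lemma~\ref{lemmaBurgess}. Your uniqueness argument for $(G,y)$ (using that $g$ reduces $E$ to $E_a$ and $f$ reduces $=_\G$ to $E$, so the $E_a$-orbit of $g(f(G))$ determines $G$; and then that two transversal elements in the same left coset of $\Sigma(G)$ coincide) is exactly right, as is the construction of the backward reduction by sending $z$ to the unique $x$ with $f(G_{S^x})\,E\,z$. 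Minor slip: $\mathrm{range}(f\circ h)$ \emph{is} Borel (since $f\circ h$ is injective Borel, because $h$ is injective and $f$ reduces $=_\G$ to $E$); what fails is only $E$-invariance.

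The genuine gap is exactly the one you flag at the end: you never actually prove that the set $B$ (equivalently, the domain of your partial selection map $z\mapsto(G,y)$) is Borel. The tools you gesture at there are the wrong ones. Corollary~\ref{cornontrivial} and Lemma~\ref{lemma2} concern rigidity of the graphs $G_T$ under $S_\infty$-actions; they play no role in the general theorem (they only enter in the applications, where one must verify the hypothesis that $\Sigma$ is Borel), and ``a function with Borel graph is Borel'' presupposes that the domain is already known to be Borel, which is precisely what needs proving. The missing ingredient is a descriptive-set-theoretic theorem controlling the projection: the paper observes that the set
\[
P = \{z\in Z \mid \exists!\,(G,y)\in B'\times Y\ ((G,y)\in T \wedge a(y,g(f(G)))=g(z))\}
\]
is the set of uniqueness of a Borel subset of $Z\times(B'\times Y)$, hence coanalytic by Luzin's theorem (\cite[Theorem 18.11]{Kechris1995}); combined with the trivial observation that the $E$-saturation of $f(B')$ is analytic, and the equality of the two sets, Suslin's theorem gives Borelness. (Alternatively you could note that the graph relation you wrote down has vertical sections of size at most one, so its projection to $Z$ is Borel by the Luzin--Novikov theorem, which amounts to the same thing.) Without one of these citations, the Borelness of $B$ — the crux of the whole theorem — is unestablished.
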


\begin{proof}
We start by showing that it is enough to prove the following claim.
\begin{claim}\label{claimmain}
For any Borel $B \subseteq \G$, the $E$-saturation ${\rm Sat}(f(B))$
of $f(B)$ is Borel.
\end{claim}

Granting this claim, the proof of the theorem can be completed as follows:
Let $R$ be an arbitrary analytic quasi-order on ${}^\omega 2$, and let $S^x$ be normal trees defined as
before, so that $x\, R\, y \iff {S^x \leq_{max} S^y} \iff {G_{S^x}
\sqsubseteq_{ \G }G_{S^y}}$ and
the map $x \mapsto G_{S^x}$ is injective. Now consider $B_R = \{ G_{S^x}
\mid x \in {}^\omega 2 \}$: being a Borel subset of $\G$, from the claim we get that $C = {\rm Sat}(f(B_R))$ is Borel and $E$-invariant. The map ${}^{\omega
}2\to Z \colon x \mapsto f(G_{S^x})$ is clearly a reduction of $R$ to
$S \restriction C$. For the other direction, note that $f(B_R)$
consists of $E$-incomparable elements (this is by our assumption that
$f$ reduces equality on $\G$ to $E$), so that the map which sends
$y \in C$ to the unique $x \in {}^\omega 2$ such that $f(G_{S^x})\, E\, y$
is a well-defined reduction of $S \restriction C$ to $R$, and it is Borel as its graph is analytic.

\smallskip
It remains to prove the claim. Since ${\rm Sat}(f(B))$ is
easily seen to be analytic, it is enough to show that it is also a
co-analytic set. Apply Lemma
\ref{lemmaBurgess} to the map $\Sigma\restriction B$, and let
$T$ be the resulting Borel subset of $B \times Y$. Since
\[
P = \{ z \in Z \mid \exists \, ! (G,y) \in B\times Y ({(G,y) \in T}
\wedge {a(y, g(f(G))) = g(z)} ) \}
\]
is the set of uniqueness of a Borel set, and hence co-analytic by a
classical result of Luzin (see e.g.\ \cite[Theorem
18.11]{Kechris1995}), it is enough to show that ${\rm Sat}(f(B)) = P$.
One inclusion is obvious, as if $g(z) = a(y, g(f(G)))$ for some $G \in
B$ and $y \in Y$, then $z\, E\, f(G)$ and hence $z \in {\rm
Sat}(f(B))$. For the other direction, assume that $G \in B$ is such
that $z\, E\, f(G)$. Since $g$ reduces $E$ to $E_a$, there is $\bar{y}
\in Y$ such that $a(\bar{y}, g(f(G)))= g(z)$. Let $y \in Y$ be in the
same (left) coset of $\Sigma(G)$ of $\bar{y}$ and such that $(G,y) \in
T$ (such a $y$ must exist because the vertical section $T_G$ meets all
left cosets of $\Sigma(G)$): since $\bar{y}^{-1} y \in \Sigma(G)$ we
have that $a(\bar{y}^{-1} y, g(f(G))) = g(f(G))$, whence $a(y,g(f(G)))
= a(\bar{y},g(f(G))) = g(z)$. So $(G,y) \in B \times Y$ is such that
$(G,y) \in T$ and $a(y,g(f(G))) = g(z)$: we want to prove that $(G,y)$
is also the unique pair satisfying these conditions, so that $z \in P$.
Assume
 that $(G',y') \in T$ is such that
$a(y', g(f(G'))) = g(z)$. The last condition implies $f(G') \, E\, z\, E\,
f(G)$, so that $G=G'$. But
then $a(y',g(f(G))) = g(z) =
a(y,g(f(G)))$ implies that $y$ and $y'$ are in the same (left) coset of
$\Sigma(G)$: since $(G,y),(G,y') \in T$ and $T_G$ is a
transversal for the equivalence relation on $Y$ whose classes are the
(left) cosets of $\Sigma(G)$, we get also that $y = y'$, so that
$(G',y') = (G,y)$.
\end{proof}

\begin{remark}
Notice that our proof of Theorem \ref{theorsaturation} actually shows that, given $S,Z$ and $f$ as in the hypotheses of such theorem, Claim \ref{claimmain} already implies that the pair $(S,E)$ is invariantly universal, and hence such condition could also be useful to deal with the case when $E$ is not Borel reducible to an orbit equivalence relation (see e.g.\ the observation after Question \ref{questionfinal} below).

Moreover,
as kindly pointed out by the anonymous referee, given $S,Z,f,E_a$ and $g$ as in the hypotheses of Theorem \ref{theorsaturation}, $\Sigma$ is a Borel function if and only if the $E_a$-saturation of $g(f(\G))$ is Borel\footnote{One direction of the equivalence can be obtained using an argument similar to the proof of Claim \ref{claimmain} given above. For the other direction, since $g \circ f$ is injective on $\G$ we get that $g(f(\G))$ is a Borel transversal for the restriction of $E_a$ to its $E_a$-saturation (which is a Borel set by hypothesis). This implies that such restriction is a smooth orbit equivalence relation and hence, in particular, Borel. But then the map $\sigma$ assigning to each $w$ in the $E_a$-saturation of $g(f(\G))$ its stabilizer is Borel, and hence $\Sigma$ is a Borel map since $\Sigma = \sigma \circ g \circ f$.}.
As demonstrated by the applications in Section \ref{sectionapplications},  despite the fact that the last requirement could seem easier to check, the only way we found to achieve this goal was exactly the content of Theorem \ref{theorsaturation}, i.e.\ to prove that the map $\Sigma$ is Borel.
\end{remark}

To apply Theorem \ref{theorsaturation} we have thus to perform three
steps:
\begin{enumerate}[\quad(a)]
\item Find $f \colon \G \to Z$ and check that it witnesses both
    ${\sqsubseteq_\G} \leq_B S$ and ${=_\G} \leq_B E$. In all
    applications, $S$ is already known to be a complete analytic quasi-order
    by a proof showing ${\sqsubseteq_\G} \leq_B S$. Thus $f$ will be the
    function defined in that proof (or a minor modification of it),
    and this step consists in
    checking that such $f$ witnesses also ${=_\G} \leq_B E$, usually
    performing a detailed analysis of $f$.
\item Find the appropriate $Y$, $W$, $a$ and $g$. Often $E$ itself
    is an orbit equivalence relation of a Polish group action, so
    that $W=Z$, $Y$ and $a$ are given, and we tacitly assume
    that $g$ is the identity.
\item Prove that $\Sigma$ is Borel. In most applications
    this is the most delicate part of the proof.
\end{enumerate}

\section{Applications}\label{sectionapplications}

In this section we apply Theorem \ref{theorsaturation} to show that
all analytic quasi-orders we know to be complete are in fact invariantly universal when coupled with
natural equivalence relations. In
particular, we
answer affirmatively some questions
posed in \cite[Questions 6
and 8]{FriMot} (although in the first case we have just a partial
answer, as the case of epimorphisms preserving relations and functions
in both directions still remains completely open).
These asked whether the quasi-orders induced by epimorphisms between countable structures, isometric embeddings between ultrametric Polish spaces, continuous embeddings between compact metrizable spaces and linear isometric embeddings between separable Banach spaces are invariantly universal when paired with, respectively, isomorphism, isometry, homeomorphism and linear isometry.

The section is quite long and contains a great variety of applications
in many areas of mathematics, most of them involving some specific new
idea or technique. Subsection \ref{ref1} deals with the relation of
being epimorphic image between graphs, and constitutes the first
nontrivial application of Theorem \ref{theorsaturation}. Subsection
\ref{coloredlinearorders} gives some natural  examples of invariantly
universal quasi-orders in the realm of combinatorics. Subsection
\ref{dop} uses the results from the preceding subsection to give other
combinatorial examples which are in turn used in Subsection
\ref{dendrites}, where we study invariant universality in topology
(this is also the first application of Theorem \ref{theorsaturation}
where the analytic equivalence relation under consideration is not an
orbit equivalence relation itself, but is only reducible to such a
relation). Subsection \ref{metric} provides applications in metric
space theory. Subsection \ref{banach} deals with the case of separable
Banach spaces, and differs from all other subsections in that we need
to build a new and more manageable reduction of linear isometry
(restricted to some suitable and abstractly defined class of separable
Banach spaces) to an orbit equivalence relation (namely, to an
isomorphism relation). Finally, Subsection \ref{further} contains
applications of our main result to all remaining examples of complete
analytic quasi-orders (in this case we sketch the proofs of the results
as, contrarily to all other subsections, they do not involve any
genuinely new idea, but are combinations of techniques which already
appeared in previous applications).

The following notation, originating from \cite{frista1989}, will be
used several times. For each $n \in \omega $, let $TY_n$ be the set of
quantifier free types for the first $n$ variables in $\L$ (an empty
$0$-type is also considered here) and let $TY = \bigcup_{n\in\omega}
TY_n$. Fix a bijection $e \colon \omega \to TY$ such that $e(i) \in
TY_n, e(j) \in TY_m, n<m$ imply $i<j$. (This is possible as $ \mathcal
L $ consists of just one relation symbol.) Each $i \in \omega$ is the
\emph{code} of type $e(i)$. For $G$ an $\L$-structure on $\omega$ and
$t \in \seqo$, let $\tau_G (t) \in \omega$ be the code of the
quantifier free type of $t$ in $G$.

\subsection{Epimorphisms between graphs}\label{ref1}
Given graphs $H,H'$ on $\omega $, define the relation $\preceq_{epi}$ of \emph{being
epimorphic image} by letting $H\preceq_{epi}H'$ if and only if there is a surjection $\gamma \colon \omega\to\omega $ such that
\[ n,m \mbox{ are adjacent in } H'\Rightarrow \gamma (n),\gamma (m) \mbox{ are adjacent in } H. \]
We recall here the construction of \cite[Theorem 1]{Camerlo2005},
which defines a continuous function $G\mapsto G^*$ from the class of
graphs on $\omega $ to itself reducing the relation of embedding to the relation $\preceq_{epi}$.
We will prove some further properties of this construction, which will
be used to show invariant universality of $\preceq_{epi}$.

Let $\{ N_t  \mid t\in \seqo \}$ be a partition of $\omega $ into
infinite sets.
Within each $N_t$ fix distinct elements $a^t,c_i^t$, for $i \in \omega$
so that $N_t \setminus \{ a^t,c_i^t \mid i\in\omega \}$ is still infinite.
For any $t \in \seqo , n\in\omega $ let $L_{tn}$ be a graph on $N_t$ whose adjacency relation is defined according to the following clauses:
\begin{enumerate}
\item $N_t$ is the disjoint union of two sets $B_{tn}= \{ a^t,b_1^{tn},
  \dotsc ,b_{n+2}^{tn}\},C_{tn}=\{ c_i^t,d_i^{tn}\mid i\in\omega\} $ (where the elements in each list are pairwise distinct);
\item each of $B_{tn},C_{tn}$ forms a clique in $L_{tn}$;
\item in addition to the other elements of $B_{tn}$, vertex
  $b_{n+2}^{tn}$ is adjacent to all elements of $C_{tn}$.
\end{enumerate}
Given a graph $G$ on $\omega $, the adjacency relation on the graph $G^*$ is defined as follows:
\begin{itemize}
\item the adjacency relation on each $N_t$ is given by $L_{t\tau_G(t)}$;
\item for each $t\in \seqo , i\in\omega $, vertices $c_i^t,a^{t^{\smallfrown }i}$ are adjacent.
\end{itemize}

\begin{lemma}\label{lemmaiso}
$G \cong H \iff G^* \cong H^*$.
\end{lemma}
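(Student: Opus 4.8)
The plan is to prove the biconditional $G \cong H \iff G^* \cong H^*$ by establishing each direction separately, exploiting the rigid local structure that the gadgets $L_{tn}$ impose on $G^*$.

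First I would handle the forward direction, which is the easy one. If $\pi \colon \omega \to \omega$ is a graph isomorphism from $G$ to $H$, then $\pi$ induces a permutation of $\seqo$ by $t \mapsto \pi \cdot t$ (applying $\pi$ coordinatewise), and since $\tau_G(t) = \tau_H(\pi \cdot t)$ for every $t$ (because $\pi$ is an isomorphism, the quantifier-free type of $t$ in $G$ equals that of $\pi \cdot t$ in $H$), the blocks $N_t$ in $G^*$ and $N_{\pi \cdot t}$ in $H^*$ carry the same gadget graph $L_{t\tau_G(t)} = L_{(\pi\cdot t)\tau_H(\pi\cdot t)}$. One then chooses, for each $t$, a bijection $N_t \to N_{\pi \cdot t}$ matching $a^t \mapsto a^{\pi\cdot t}$, $b_j^{tn} \mapsto b_j^{(\pi\cdot t)n}$, $c_i^t \mapsto c_i^{\pi\cdot t}$, $d_i^{tn}\mapsto d_i^{(\pi\cdot t)n}$ and extending bijectively on the remaining infinite leftover sets; assembling these over all $t$ yields a bijection of $\omega$ that respects both the intra-block adjacencies (the $L$'s match) and the inter-block edges $c_i^t \sim a^{t^\smallfrown i}$ (since $\pi \cdot (t^\smallfrown i) = (\pi\cdot t)^\smallfrown \pi(i)$ and we need the $c$- and $a$-labels to track the successor structure — this is the point that forces the careful indexing of the bijections on the $c_i$'s by $i$ matching the $\pi(i)$-th successor).

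The hard part is the reverse direction: assuming $G^* \cong H^*$, recover $G \cong H$. Here the strategy is to show that any isomorphism $\psi \colon G^* \to H^*$ must respect the decomposition into blocks $\{N_t\}$ and must in fact induce a tree-automorphism-like map on $\seqo$ of the form $t \mapsto \pi\cdot t$ for a genuine isomorphism $\pi$. This is where one must argue that the structural features of the gadgets — degrees/valences of the special vertices $a^t$, $b_{n+2}^{tn}$, the clique sizes $|B_{tn}| = n+3$, and the fact that $b_{n+2}^{tn}$ is the unique vertex of $B_{tn}$ adjacent into $C_{tn}$ — are all graph-theoretically definable, so that $\psi$ must send $N_t$-structure to $N_{t'}$-structure with $\tau_G(t) = \tau_H(t')$, and the inter-block edges $c_i^t \sim a^{t^\smallfrown i}$ force $t \mapsto t'$ to preserve the "immediate successor" relation on $\seqo$, hence to be induced by a single permutation $\pi$ of $\omega$. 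The finite clique $B_{tn}$ has size depending on $n = \tau_G(t)$, which pins down the type code at $t$; combined with the tree structure this should let one conclude $\tau_G(t) = \tau_H(\pi\cdot t)$ for all $t$, and then a standard back-and-forth / type-chasing argument (exactly as in the original \cite{Camerlo2005} construction) upgrades this to $\pi$ being an isomorphism $G \cong H$. I would expect the main obstacle to be verifying cleanly that $\psi$ cannot "mix" blocks — i.e., that the only adjacencies between distinct $N_t$, $N_{t'}$ are the single edges $c_i^t \sim a^{t^\smallfrown i}$, and that these together with the valence data rigidly determine the image block of each $N_t$ — so the bulk of the proof is a careful valence-and-distance bookkeeping on $G^*$ of the same flavor as Lemmas \ref{lemma1} and \ref{lemma2}, which I would invoke or mimic rather than redo in full detail.
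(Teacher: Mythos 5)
Your forward direction is essentially the paper's: the isomorphism $\pi$ induces the coordinatewise bijection $\pi'$ of $\seqo$, which preserves type codes (so the gadget on $N_t$ in $G^*$ and on $N_{\pi\cdot t}$ in $H^*$ coincide), and one assembles a block-by-block bijection. The one slip is that you write $c_i^t \mapsto c_i^{\pi\cdot t}$; the correct assignment is $c_i^t \mapsto c_{\pi(i)}^{\pi\cdot t}$ so that the inter-block edge $c_i^t \sim a^{t^{\smallfrown}i}$ maps to $c_{\pi(i)}^{\pi\cdot t} \sim a^{(\pi\cdot t)^{\smallfrown}\pi(i)}$. Your parenthetical acknowledges exactly this, so it reads as a notational slip rather than a conceptual one.

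The reverse direction contains a real gap. You assert that an isomorphism $\psi \colon G^* \to H^*$ ``must in fact induce a tree-automorphism-like map on $\seqo$ of the form $t \mapsto \pi\cdot t$ for a genuine isomorphism $\pi$,'' on the grounds that preserving the immediate-successor structure forces this. That inference is false. What the valence-and-clique-size bookkeeping gives you is that $\psi(a^t) = a^{\psi'(t)}$ for some map $\psi'$ that is a \emph{Lipschitz bijection} of $\seqo$ (it preserves lengths and the extension order) satisfying $\tau_G(t)=\tau_H(\psi'(t))$ for all $t$ --- and nothing more. The permutation that $\psi'$ performs on the children of a node can vary from node to node, because the vertices $c_i^t$ inside a single gadget $L_{t\tau_G(t)}$ are mutually indistinguishable (they lie in one clique and are told apart only by which block $N_{t^{\smallfrown}i}$ they point to); nothing in $G^*$ forces these local child-permutations to be the restriction of a single element of $S_\infty$. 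Consequently there is in general no $\pi$ with $\psi'(t)=\pi\cdot t$, and the back-and-forth you mention at the end is not an ``upgrade'' but the heart of the argument: given only a type-preserving Lipschitz bijection $\psi'$, one must, as in the paper, inductively build $u \in S_\infty$ so that $v = \bigcup_n \psi'(u\restriction n)$ also lies in $S_\infty$ and $\tau_G(u\restriction n)=\tau_H(v\restriction n)$ for all $n$; then $u(n)\mapsto v(n)$ is the isomorphism $G\cong H$. (Note also that were your claim true, the back-and-forth would be vacuous: a single $\pi$ satisfying $\tau_G(t)=\tau_H(\pi\cdot t)$ for all $t$ is already an isomorphism.) As written, your argument only handles those $\psi$ that happen to be coordinatewise, which is not all of them.
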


\begin{proof}
Let $ \fhi \colon \omega \to \omega$ be an isomorphism between $G$ and $H$.
Then $\fhi$ induces componentwise a bijection $\fhi' \colon \seqo \to
\seqo$ with the property $\forall t \in \seqo (\tau_G(t)=\tau_H
\fhi'(t))$.
An isomorphism $\psi \colon \omega \to \omega$ between $G^*$ and $H^*$
is obtained by setting for every $t \in \seqo$:
\begin{itemize}
\item $\psi (a^t)=a^{\fhi'(t)}$;
\item $\psi (b_j^{t\tau_G(t)})=b_j^{ \fhi'(t)\tau_H \fhi'(t)}$, for $j
  \in \{ 1, \dotsc ,\tau_G(t)+2 \}$;
\item $\psi (c_i^t)=c_{\fhi(i)}^{\fhi'(t)}$,
  $\psi(d_i^{t\tau_G(t)})=d_i^{\fhi'(t)\tau_H \fhi'(t)}$, for $i\in
  \omega$.
\end{itemize}

Conversely, let $\psi \colon \omega \to \omega$ be an isomorphism
between $G^*$ and $H^*$.
Observe that $\psi(a^t)=a^{t'}$, for $t\neq\emptyset $, and the
map $\psi'$ sending $t$ into
$t'$ and $\emptyset$ into itself  is a Lipschitz bijection $\seqo \to \seqo$ such that
$\forall t \in \seqo (\tau_G(t)=\tau_H\psi'(t))$.
Imitating the proof of \cite{Camerlo2005}, by a back and forth argument inductively construct $u \in S_{\infty}$
such that $v = \bigcup_{n \in \omega} \psi'(u \restriction n) \in
S_\infty$, $\forall n \in \omega (\tau_G(u \restriction n)=\tau_H
\psi'(u \restriction n))$.
Then the bijection $u(n) \mapsto v(n)$ is an isomorphism between $G$ and $H$.
\end{proof}

\begin{defin}
Let $G$ be a graph.
An automorphism $\fhi$ of $G$ is \emph{simple} if whenever $ \fhi (u)
\neq u$, then $u$ belongs to a unique maximal clique of $G$ and
$\fhi(u)$ belongs to this same clique.
\end{defin}

So for graphs of the form $G^*$, an automorphism is simple if and only
if it leaves  all $a^t$ fixed, $t\neq\emptyset $.
We remark that such an automorphism fixes the elements of the form
$b_{\tau_G(t)+2}^{t\tau_G(t)}$ and $c_i^t$ too,  while it can
permutate sets of the forms $\{ b_1^{t\tau_G(t)},\dotsc
,b_{\tau_G(t)+1}^{t\tau_G(t)}\} $ and $\{ d_i^{t\tau_G(t)} \mid i\in\omega \}$.

\begin{lemma}\label{lemmasimple}
If the only automorphism of $G$ is the identity then all automorphisms
of $G^*$ are simple.
\end{lemma}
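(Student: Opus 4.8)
The plan is to reduce the statement to showing that the Lipschitz bijection $\psi' \colon \seqo \to \seqo$ induced by an automorphism $\psi$ of $G^*$ --- exactly as in the ``conversely'' part of the proof of Lemma~\ref{lemmaiso}, applied with $H = G$ --- is the identity. Recall that then $\psi(a^t) = a^{\psi'(t)}$ for every $t \neq \emptyset$, that $\psi'(\emptyset) = \emptyset$, and that $\tau_G(t) = \tau_G(\psi'(t))$ for all $t \in \seqo$. By the observation recorded right after the definition of a simple automorphism, $\psi$ is simple exactly when it fixes every $a^t$ with $t \neq \emptyset$, that is, exactly when $\psi' = \mathrm{id}$; so this is all we must prove.

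Fix $t \in \seqo$; the goal is $\psi'(t) = t$. First note that for each node $p \in \seqo$ the ``child map'' $\rho_p \colon \omega \to \omega$ defined by $\psi'(p {}^\smallfrown \langle i \rangle) = \psi'(p) {}^\smallfrown \langle \rho_p(i) \rangle$ is a bijection: injectivity is immediate from injectivity of $\psi'$, and surjectivity follows since $\psi'$ is onto and Lipschitz (any $q$ with $\psi'(q) = \psi'(p) {}^\smallfrown \langle j \rangle$ satisfies $q \restriction |p| = p$). Then build, by a back-and-forth argument in the spirit of the one in the proof of Lemma~\ref{lemmaiso}, a function $u \in {}^\omega\omega$ with $u \restriction |t| = t$, together with $v := \bigcup_m \psi'(u \restriction m) \in {}^\omega\omega$, so arranged that both $u$ and $v$ are surjective onto $\omega$: at the stages devoted to $u$, let $u(m)$ be the least value missing from $\{u(0), \dots, u(m-1)\}$; at the stages devoted to $v$, let $u(m)$ be the unique $i$ such that $\rho_{u \restriction m}(i)$ is the least value missing from $\{v(0), \dots, v(m-1)\}$ (here $v(l) = \rho_{u \restriction l}(u(l))$). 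Crucially, $u$ need not be injective, and it is this relaxation that lets us start from an arbitrary $t$, possibly with repeated entries.

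Since $v \restriction m = \psi'(u \restriction m)$ and $\tau_G \circ \psi' = \tau_G$, for every $m$ the tuples $(u(0), \dots, u(m-1))$ and $(v(0), \dots, v(m-1))$ realize the same quantifier free type in $G$; in particular $u(k) = u(l) \iff v(k) = v(l)$ and $u(k)$ is $G$-adjacent to $u(l)$ iff $v(k)$ is $G$-adjacent to $v(l)$, for all $k, l$. Since $u$ and $v$ are onto $\omega$, the map $u(k) \mapsto v(k)$ is thus a well-defined bijection of $\omega$ that is an automorphism of $G$; as $G$ is rigid it must be the identity, so $v = u$, whence $\psi'(t) = \psi'(u \restriction |t|) = v \restriction |t| = t$. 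As $t$ was arbitrary, $\psi' = \mathrm{id}$, and so $\psi$ is simple.

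The step that needs most care is the back-and-forth construction, and within it the point that $u$ may be taken non-injective. This is what makes the argument reach the vertices $a^t$ with $t$ having repeated entries, and it is legitimate because $\L$ has a single binary relation symbol: hence $\tau_G$ records the equality pattern of a tuple and forces $u$ and $v$ to repeat values in exactly the same coordinates, which is precisely what is needed for $u(k) \mapsto v(k)$ to be well defined even when $u \notin S_\infty$. Everything else --- the production of $\psi'$ and its basic properties, the bijectivity of the child maps, and the passage from ``equal quantifier free types at every finite level'' to ``automorphism of $G$'' --- is routine and already implicit in the proof of Lemma~\ref{lemmaiso}.
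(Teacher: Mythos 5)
Your proof is correct and follows essentially the same approach as the paper's: both rely on the fact that $\psi'$ preserves the quantifier-free type function $\tau_G$, a back-and-forth construction producing surjective (but possibly non-injective) sequences $u,v$ with $v\restriction m = \psi'(u\restriction m)$, and then rigidity of $G$ applied to the induced map $u(k)\mapsto v(k)$. The only difference is presentational — the paper argues by contradiction from a non-simple $\psi$ moving some $a^t$, while you argue directly that $\psi'(t)=t$ for each $t$ — and your explicit bijectivity of the ``child maps'' $\rho_p$ is implicit in the paper's use of the fact that $\psi'$ is a Lipschitz bijection.
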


\begin{proof}
Suppose $\psi \colon \omega \to \omega $ be a non-simple automorphism of $G^*$.
Then there exist distinct elements $t, s \in {}^m\omega$, for some $m>0$, such that $\psi (a^t)=a^s$.
Notice that if $t(n_0),\dotsc ,t(n_k)$ are distinct and include all
values taken by $t$, then $s(n_0),\dotsc ,s(n_k)$ are distinct and
include all values of $s$.
By a back and forth argument, build $u,v\in {}^{\omega }\omega$ such
that $(t(n_0),\dotsc ,t(n_k))^{\smallfrown }u,(s(n_0),\dotsc
,s(n_k))^{\smallfrown }v \in S_{\infty }$ and, letting
$x=t^{\smallfrown }u,y=s^{\smallfrown }v$, the relation $\forall n>0(\psi(a^{x\restriction n})=a^{y\restriction n})$ holds.
Since $\forall n \in \omega (\tau_G(x\restriction
n)=\tau_G(y\restriction n))$, the function $x(n)\mapsto y(n)$ is a
non-trivial automorphism of $G$.
\end{proof}

\begin{lemma}
Given a graph $G$ on $\omega $, the set $H_G$ of simple
automorphisms of $G$ is a closed subgroup of $S_\infty$.
\end{lemma}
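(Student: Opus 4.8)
The plan is to verify that $H_G$ satisfies the two defining properties of a closed subgroup of $S_\infty$: it is a subgroup, and it is closed in the topology of $S_\infty$ (equivalently, closed in the Polish space of all permutations with the standard product topology).

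First I would check that $H_G$ is a subgroup of $\mathrm{Aut}(G)$. The identity is clearly simple (it moves no vertex, so the implication in the definition is vacuous). For closure under inverse, observe that if $\fhi$ is simple and $\fhi^{-1}(u) \neq u$, then setting $v = \fhi^{-1}(u)$ we have $\fhi(v) \neq v$, so by simplicity $v$ lies in a unique maximal clique $K$ and $\fhi(v) = u \in K$; hence $u$ lies in $K$ as well, and since $u$ is in a unique maximal clique (because $v$ is, and $u$ and $v$ lie in the same clique $K$, which must then be the unique maximal clique through $u$ too), we get $\fhi^{-1}(u) = v \in K$, as required. For closure under composition, suppose $\fhi, \psi$ are simple and $(\psi \circ \fhi)(u) \neq u$. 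If $\fhi(u) \neq u$, then $u$ is in a unique maximal clique $K$ and $\fhi(u) \in K$; if moreover $\psi(\fhi(u)) \neq \fhi(u)$, then $\fhi(u)$ is in a unique maximal clique $K'$ and $\psi(\fhi(u)) \in K'$ — but $\fhi(u) \in K$, so $K' = K$, giving $(\psi\circ\fhi)(u) \in K$. The cases where exactly one of the two maps fixes the relevant point are handled similarly (and more easily). So the composite is simple.

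Next I would verify closedness. Membership in $S_\infty$ is a closed condition (it is a closed subgroup of the full permutation space only in the appropriate sense, but since we are working inside $S_\infty$ from the start, it suffices to show $H_G$ is relatively closed). Being an automorphism of $G$ is a closed condition, as $\mathrm{Aut}(G)$ is a closed subgroup of $S_\infty$ (this is standard: it is the stabilizer of $G$ under the logic action). It then remains to see that simplicity is closed inside $\mathrm{Aut}(G)$. For this, fix $u \in \omega$ and consider the set $D_u$ of automorphisms $\fhi$ of $G$ with $\fhi(u) \neq u \Rightarrow (u$ belongs to a unique maximal clique $K_u$ of $G$ and $\fhi(u) \in K_u)$. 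If $u$ belongs to a unique maximal clique $K_u$, then $D_u = \{\fhi \mid \fhi(u) \in K_u\} \cup \{\fhi \mid \fhi(u) = u\}$, which is a union of two clopen sets (conditions on the single coordinate $\fhi(u)$), hence clopen; if $u$ does not belong to a unique maximal clique, then $D_u = \{\fhi \mid \fhi(u) = u\}$, again clopen. Since $H_G = \mathrm{Aut}(G) \cap \bigcap_{u \in \omega} D_u$ is an intersection of closed sets, it is closed.

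I do not expect a serious obstacle here; the only point requiring slight care is the composition case, where one must track that the "unique maximal clique" through $u$ and through $\fhi(u)$ must coincide when $\fhi$ moves $u$ within a clique — this is exactly what makes the class of simple automorphisms closed under composition rather than just forming a generating set. Everything else is routine verification of clopen/closed conditions coordinate by coordinate.
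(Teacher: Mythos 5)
Your overall strategy coincides with the paper's: show $H_G$ is a subgroup and that it is (relatively) closed in $\mathrm{Aut}(G)$, which is itself closed in $S_\infty$. Your clopen-set decomposition for closedness is a clean restatement of the paper's sequential argument (both exploit that $\{\fhi \mid \fhi(u) \in A\}$ is clopen for any fixed $u$ and $A\subseteq\omega$), so that part is fine.

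There is, however, a gap in your inverse step. You assert that $u$ belongs to a unique maximal clique ``because $v$ is [in a unique maximal clique], and $u$ and $v$ lie in the same clique $K$, which must then be the unique maximal clique through $u$ too.'' That implication is false in general: if $G$ is a triangle $\{a,b,c\}$ with a pendant vertex $d$ attached to $a$, then $v=b$ lies in the unique maximal clique $K=\{a,b,c\}$ and $u=a\in K$, yet $u$ also lies in the maximal clique $\{a,d\}$. The conclusion you want is true, but for a different reason---precisely the one the paper singles out as the crux of the subgroup verification: \emph{for any automorphism $\fhi$ of $G$, if $u$ belongs to a unique maximal clique, then so does $\fhi(u)$}, since automorphisms permute maximal cliques. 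Applied to your situation, $v$ lies in a unique maximal clique $K$, hence $u=\fhi(v)$ lies in a unique maximal clique, and since $u\in K$ with $K$ maximal that clique must be $K$; then $v\in K$ gives simplicity of $\fhi^{-1}$. Your composition step is correct as written. So the proof is recoverable with this one repair, and once repaired it is essentially the paper's argument.
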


\begin{proof}
The fact that $H_G$ is a group uses the fact the if
$u \in \omega$ belongs to a unique maximal clique in $G$, then the same
holds for $\fhi (u)$ for any automorphism $\fhi$ of $G$.

It is now enough to show that $H_G$ is closed in the automorphism
group of $G$, since the latter is closed in $S_\infty$.
Let $\fhi_n, \fhi$ be automorphisms of $G$ such that $\lim_{n \to
  \infty} \fhi_n= \fhi \notin H_G$.
Let $u \in \omega $ be such that $\fhi(u) \neq u$ where either $u$
belongs to more than one maximal clique of $G$ or $\fhi(u)$ does not
belong to the same maximal clique as $u$.
Then eventually $\fhi_n(u)= \fhi(u)$, so $\fhi_n \notin H_G$.
\end{proof}

\begin{lemma}\label{lemmaborelmap}
The map $G \mapsto H_G$ from the space of graphs on $\omega $ to
$G(S_\infty)$, assigning to each graph the group of its simple
automorphisms, is Borel.
\end{lemma}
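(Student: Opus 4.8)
The plan is to show that the map $G \mapsto H_G$ is Borel by exhibiting, for each basic open set in $G(S_\infty)$, a Borel preimage. Recall that the Effros Borel structure on $G(S_\infty)$ (equivalently, on $F(S_\infty)$) is generated by the sets $\{Z \in G(S_\infty) \mid Z \cap U \neq \emptyset\}$ as $U$ ranges over a countable basis for $S_\infty$; so it suffices to check that for each such $U$ the set $\{G \mid H_G \cap U \neq \emptyset\}$ is Borel. Since $S_\infty$ is Polish, one can in fact fix a countable dense sequence of basic open sets and it is enough to deal with sets of the form $\{G \mid \exists \fhi \in H_G\ (\fhi \text{ extends } p)\}$ where $p$ ranges over the finite injective partial functions $\omega \rightharpoonup \omega$ whose extension to a permutation determines such a basic neighborhood.

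The key observation is the explicit description of simple automorphisms recorded just before the lemma: an automorphism $\psi$ of $G^*$ is simple precisely when $\psi(a^t) = a^t$ for all $t \neq \emptyset$, and such a $\psi$ automatically fixes every $b^{t\tau_G(t)}_{\tau_G(t)+2}$ and every $c^t_i$, and acts on each finite set $\{b^{t\tau_G(t)}_1, \dots, b^{t\tau_G(t)}_{\tau_G(t)+1}\}$ and each countable set $\{d^{t\tau_G(t)}_i \mid i \in \omega\}$ as an arbitrary permutation, independently across the different $t$. Hence $H_G$ is naturally a product $\prod_{t \in \seqo} \bigl(\mathrm{Sym}(\tau_G(t)+1) \times S_\infty\bigr)$ sitting inside $S_\infty$, and membership of a given permutation $\fhi \in S_\infty$ in $H_G$ is a condition that can be checked coordinate by coordinate, each coordinate depending only on finitely much of $\fhi$ together with the value $\tau_G(t)$ (which is itself a Borel function of $G$, as $\tau_G(t)$ is the code of the quantifier-free type of $t$ in $G$). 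The point is that $H_G$ is not merely closed but has this transparent "coordinatewise" shape dictated by the Borel parameter $\langle \tau_G(t) \mid t \in \seqo\rangle$.

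Concretely, I would argue: the set $\{(G,\fhi) \mid \fhi \in H_G\}$ is Borel in (graphs)$\times S_\infty$, because it is the conjunction over $t \neq \emptyset$ of the clauses "$\fhi(a^t) = a^t$" (open, not even depending on $G$) together with, for each $t$, the requirement that $\fhi$ map $N_t$ to $N_t$ respecting the partition of $N_t$ into the blocks determined by $L_{t\tau_G(t)}$ — and this last requirement, for fixed $t$, is a Borel condition on $(G,\fhi)$ since it only refers to $\tau_G(t)$ and to $\fhi$ restricted to the set $N_t$ (one quantifies over the finitely many possible values of $\tau_G(t)$, each giving a Borel-in-$\fhi$ condition). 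Actually since a permutation fixing every $a^t$ and mapping each $N_t$ to itself is automatically an automorphism of $G^*$ (the only edges between distinct $N_t$'s are the $c^t_i$–$a^{t^\frown i}$ edges, and those endpoints are fixed), membership in $H_G$ reduces exactly to these two families of clauses. With $\{(G,\fhi) \mid \fhi \in H_G\}$ Borel, the set $\{G \mid H_G \cap U \neq \emptyset\}$ is the projection along the $\fhi$-axis of the intersection of this Borel set with the open set $\{(G,\fhi) \mid \fhi \in U\}$; by the Arsenin–Kunugui / Lusin–Novikov uniformization machinery (projections of Borel sets with $\sigma$-compact, or here $K_\sigma$, sections are Borel), or more simply because for a closed-subgroup-valued map Borelness in the Effros structure is equivalent to the graph being Borel (cf.\ the standard characterization, e.g.\ via Burgess' selection theorem as used for Lemma~\ref{lemmaBurgess}), we conclude Borelness of $G \mapsto H_G$.

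The main obstacle is the passage from "the graph $\{(G,\fhi) : \fhi \in H_G\}$ is Borel" to "the map into $G(S_\infty)$ is Borel": one must invoke the right measurable-uniformization fact, namely that a map $X \to F(Y)$ (with $Y$ Polish) whose graph is Borel and whose sections are closed is Effros-Borel — this holds because the sections here are closed subgroups, hence in particular Polish, and one can apply the Lusin–Novikov theorem to the Borel set $\{(G,\fhi) : \fhi \in H_G \cap U\}$ to see its projection is Borel (the sections of $H_G \cap U$ need not be countable, but they are $K_\sigma$ — indeed $H_G$ is a closed subgroup of $S_\infty$, which is not locally compact, so one should instead use that $\{(G,\fhi): \fhi \in H_G\}$ Borel with closed sections suffices for Effros-measurability of the induced map, a standard fact about the Effros structure). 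Everything else — checking the explicit clauses defining $H_G$, and that these clauses are Borel in $G$ via the Borel function $t \mapsto \tau_G(t)$ — is routine bookkeeping of the kind already carried out in Lemmas~\ref{lemmasimple} and the preceding remarks.
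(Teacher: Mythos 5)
Your argument has a genuine gap in the final step, where you pass from ``the graph $\{(G,\fhi) : \fhi \in H_G\}$ is Borel with closed sections'' to ``the map $G \mapsto H_G$ is Effros--Borel.'' There is no such implication. In general, a Borel subset of $X \times Y$ with closed sections can have a non-Borel projection onto $X$ (take a closed $A \subseteq {}^\omega 2 \times {}^\omega\omega$ whose projection is a complete analytic set: each section is closed, yet $\{x \mid A_x \neq \emptyset\} = \{x \mid A_x \cap Y \neq \emptyset\}$, the preimage of a basic Effros set, is not Borel). Your own text flags the problem --- Luzin--Novikov needs countable sections, Arsenin--Kunugui needs $K_\sigma$ sections, and neither applies since $H_G$ is an infinite closed subgroup of the non-$\sigma$-compact group $S_\infty$ --- and then falls back on ``a standard fact about the Effros structure'' that is simply not a fact. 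Burgess' theorem and the Kuratowski--Ryll-Nardzewski selection theorem used in Lemma~\ref{lemmaBurgess} do not rescue this either: they go in the opposite direction (from a Borel Effros-valued map to Borel selectors), not from a Borel graph to Effros-measurability.

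A secondary issue is that you argue throughout about simple automorphisms of graphs of the form $G^*$, using the block description that applies only to those graphs, whereas the lemma is stated for an arbitrary graph $G$ on $\omega$. The paper's proof stays at the level of generality of the statement and avoids the projection problem entirely: it characterizes, \emph{directly as a Borel condition on $G$}, when a given finite injective sequence $(a_0,\dotsc,a_n)$ extends to a simple automorphism of $G$ --- namely, that $(a_0,\dotsc,a_n)$ is a partial automorphism, that whenever $i \neq a_i$ the vertex $i$ lies in a unique maximal clique (expressed by: any two neighbours of $i$ are adjacent, and likewise for $a_i$), and that $i,a_i$ are adjacent. This replaces the existential quantifier over $\fhi \in S_\infty$ by a first-order formula in $G$, so no projection theorem is needed. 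Your ``coordinatewise'' description of $H_{G^*}$ could in principle be used to write down an analogous explicit Borel characterization of ``$H_{G^*}$ meets the basic open set $N_p$,'' and that would repair the argument for graphs in the range of $G \mapsto G^*$; but as written you instead route through the projection step, which is where the proof breaks.
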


\begin{proof}
It in enough to show that given any finite injective sequence
$(a_0,\dotsc ,a_n)$ of natural numbers, the class of graphs having
a simple automorphism extending $(a_0,\dotsc ,a_n)$ is Borel.
A graph $G$ belongs to this class if and only if it satisfies the following Borel
conditions, for all $i,j \in \{ 0,\dotsc ,n\} $:
\begin{enumerate}
\item $i,j$ are adjacent in $G$ if and only if $a_i,a_j$ are adjacent in $G$;
\item if $i \neq a_i$ then:
\begin{itemize}
\item for all distinct $u,v\in\omega $, if $i$ is adjacent in $G$ both
  to $u,v$, then $u,v$ are adjacent in $G$;
\item for all distinct $u,v\in\omega $, if $a_i$ is adjacent in $G$ to
  both $u,v$, then $u,v$ are adjacent in $G$;
\item $i,a_i$ are adjacent in $G$. \qedhere
\end{itemize}
\end{enumerate}
\end{proof}

\begin{theorem}
The relation $\preceq_{epi}$ of being epimorphic image on countable
graphs is invariantly universal. In view of Lopez-Escobar's theorem, this means that if $R$ is an analytic quasi-order on a
standard Borel space $X$ then
 there is an $\L_{\omega_1\omega}$-sentence $\fhi$
   such that $R$ is
Borel bireducible with the relation $\preceq_{epi}$ on $Mod_\fhi$, where
each element of $Mod_\fhi$ is a graph.
\end{theorem}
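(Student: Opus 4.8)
The plan is to apply Theorem~\ref{theorsaturation} to the pair $(S,E) = (\preceq_{epi}, \cong)$ on the standard Borel space $Z$ of graphs on $\omega$, following the three-step scheme (a)--(c) listed after that theorem. Since $\cong$ is the orbit equivalence relation of the logic action $j_\L$ of $Y = S_\infty$ on $Mod_\L$, step (b) is free: we take $W = Z$, $Y = S_\infty$, $a = j_\L$, and $g$ the identity, so that $E = E_a$ and the only real work is steps (a) and (c).

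For step (a), the natural candidate is $f = (G \mapsto G^*) \restriction \G$, i.e.\ the composition of the embedding $\G \hookrightarrow Mod_\L$ with the continuous map $G \mapsto G^*$ from \cite{Camerlo2005}, which by construction witnesses $\sqsubseteq_\G \;\leq_B\; \preceq_{epi}$ (recall $G \sqsubseteq H \iff G^* \preceq_{epi} H^*$). I still need $f$ to witness ${=_\G} \leq_B {\cong}$, i.e.\ that for $T, T'$ normal trees on $2 \times \omega$, $G_T \cong G_{T'} \iff (G_T)^* \cong (G_{T'})^*$. The forward implication is Lemma~\ref{lemmaiso}. The reverse implication is the point: by Lemma~\ref{lemmaiso} applied contrapositively it suffices to know $(G_T)^* \cong (G_{T'})^* \imp G_T \cong G_{T'}$, which is exactly the ``$\Leftarrow$'' half of Lemma~\ref{lemmaiso} (since that lemma is stated as an iff for arbitrary graphs $G,H$, it gives both directions for $G = G_T$, $H = G_{T'}$). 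So step (a) follows immediately from Lemmas~\ref{lemma1}, \ref{lemmaiso} and Corollary~\ref{mathbbg}.

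The hard part is step (c): proving that $\Sigma \colon \G \to G(S_\infty)$, sending $G \in \G$ to the stabilizer of $(g\circ f)(G) = (G_T)^*$ under $j_\L$, is Borel. The key structural input is the combination of Corollary~\ref{cornontrivial} (each $G_T$ is rigid) with Lemma~\ref{lemmasimple} (if $G$ has only the trivial automorphism then every automorphism of $G^*$ is simple): applying these to $G = G_T$, the automorphism group of $(G_T)^*$ equals the group $H_{(G_T)^*}$ of \emph{simple} automorphisms of $(G_T)^*$. Hence $\Sigma(G_T) = H_{(G_T)^*}$, and since by Lemma~\ref{lemmaborelmap} the map $G' \mapsto H_{G'}$ is a Borel map from graphs on $\omega$ to $G(S_\infty)$, we get $\Sigma = (G' \mapsto H_{G'}) \circ f$ as a composition of Borel maps, hence Borel. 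One must also check the two closure/group lemmas apply, i.e.\ that $H_{G^*}$ is genuinely a closed subgroup of $S_\infty$ (the two lemmas preceding Lemma~\ref{lemmaborelmap}), which has already been established in the excerpt.

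With steps (a)--(c) in hand, Theorem~\ref{theorsaturation} yields that $(\preceq_{epi}, \cong)$ is invariantly universal; the reformulation in terms of $Mod_\fhi$ is then just Lopez-Escobar's theorem, since the $\cong$-invariant Borel set $B$ produced by the theorem is of the form $Mod_\fhi$ for some $\L_{\omega_1\omega}$-sentence $\fhi$, and every element of that set can be taken to be a graph by incorporating the axioms for ``being a graph'' into $\fhi$. The only delicate point worth double-checking is that the rigidity of $G_T$ is used with the right graph: Lemma~\ref{lemmasimple} needs the \emph{original} graph fed to the $(\cdot)^*$ construction to be rigid, and that graph is $G_T$, which is rigid by Corollary~\ref{cornontrivial} — so the chain of implications closes.
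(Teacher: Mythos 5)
Your proposal is correct and follows essentially the same route as the paper: choose $f = (G \mapsto G^*)\restriction\G$ and $(Y,a,g)=(S_\infty,j_\L,\mathrm{id})$, use Lemma~\ref{lemmaiso} for step (a), and for step (c) combine Corollary~\ref{cornontrivial} with Lemma~\ref{lemmasimple} to identify $\mathrm{Aut}(G^*)$ with the simple automorphism group, then invoke Lemma~\ref{lemmaborelmap}. Your write-up is in fact slightly more careful than the paper's on one notational point: the relevant Borel map is $G \mapsto H_{G^*}$ (composed as $(G' \mapsto H_{G'})\circ f$), which you state explicitly, whereas the paper's phrasing ``the group $H_G$ of its simple automorphisms'' elides the distinction between the simple automorphisms of $G$ and of $G^*$.
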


\begin{proof}
The $\preceq_{epi}$-isomorphism relation is
isomorphism, which is induced by the logic action of
$S_\infty$ on $Mod_\L$. By Lemma \ref{lemmaiso}, the map $G \mapsto
G^*$ reducing embeddability to $\preceq_{epi}$ reduces also isomorphism
to isomorphism. Moreover, by Corollary \ref{cornontrivial} and Lemma
\ref{lemmasimple} we get that for   $G \in \G$ the group
of automorphisms of $G^*$ coincides with the group $H_G$ of its
simple automorphisms, and then by Lemma \ref{lemmaborelmap} the map
$\Sigma$ assigning to each $G \in \G$ the group of automorphisms of $G^*$ is
Borel. Therefore we can apply Theorem \ref{theorsaturation} to get
that $(\preceq_{epi}, \cong)$ is invariantly universal.
\end{proof}

\subsection{Embeddings between colored linear orders}\label{coloredlinearorders}

This section and the next deal with various kinds of embeddings for linear orderings.

Denote by $LO$ the Polish space of (strict) linear orderings on $\omega$ and
let $R$ be any quasi-order on $\omega$. Define the analytic quasi-order
$\preceq_R$ on $LO \times {}^\omega \omega$ by letting $(L,c)
\preceq_R (L',c')$ if and only if there is an
embedding\footnote{We can also replace ``embedding'' with
  ``homomorphism'' or ``weak-homomorphism'', as all these notions
  coincide on strict linear orderings.} $g$ of $L$
into $L'$ which preserves $R$, i.e.\ such that $c(n)\, R\, c'(g(n))$ for
every $n \in \omega$. In \cite{MarRos} it was proved that $\preceq_=$
is a complete analytic quasi-order, and this
result was then extended in \cite{Camerlo} using
the following construction.

Fix an enumeration $\langle k_n  \mid n \in \omega \rangle$ of $\omega$
such that every natural number is listed infinitely many times. In this
way, each element $t \in \seqo$ can be seen as a label for the sequence
$\lambda_t = \langle k_{t(0)}, k_{t(1)}, \dotsc, k_{t(|t|-1)} \rangle
\in \seqo$. Given any graph $G$, define the colored order $L_G = (L_G,
c_G)$ by replacing in the lexicographic order of $\seqo \setminus \{
\emptyset \}$ each $t$ with an interval $I^t_G$ (later called a
\emph{block}) of order type $\omega^{2 \tau_G(\lambda_t)}$ (this block
and its elements will be said to \emph{correspond} to $t$ or to
\emph{replace} $t$), all these points colored by $\tau_G(\lambda_t)$.
In \cite{Camerlo}, it was shown that for every $G,G'$ if $G$ embeds
into $G'$ then $L_G \preceq_= L_{G'}$, and if $L_G \preceq_\ge L_{G'}$
then $G$ embeds into $G'$ (so that, in particular, any analytic
quasi-order $S$ on $LO \times {}^\omega \omega$ such that ${\preceq_=}
\subseteq S \subseteq {\preceq_{\geq}}$ is complete analytic).

Note that each quasi-order of the form $\preceq_R$ is a morphism
relation, and both $\preceq_=$ and $\preceq_\ge$ have the same
associated isomorphism relation $\cong_=$ of color
preserving isomorphism. We will now simultaneously show that both
$\preceq_=$ and
$\preceq_{\geq}$ are in fact invariantly universal.

\begin{lemma}\label{lemmaidentity}
If $L_G \cong_= L_{G'}$ via the isomorphism $g \colon \omega \to \omega$,
then $G = G'$ and $g = id$.
\end{lemma}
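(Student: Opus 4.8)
The plan is to analyze what a color-preserving isomorphism $g \colon \omega \to \omega$ between $L_G$ and $L_{G'}$ must look like, using the rigid block structure built into the construction. First I would observe that $g$ must carry blocks to blocks: the points of $L_G$ corresponding to a given $t \in \seqo \setminus \{\emptyset\}$ form an interval $I^t_G$ of order type $\omega^{2\tau_G(\lambda_t)}$, all of the same color $\tau_G(\lambda_t)$, and these blocks are arranged in the lexicographic order of $\seqo\setminus\{\emptyset\}$. Since $g$ is an order isomorphism preserving colors, and since the decomposition of $L_G$ into maximal monochromatic intervals with ordinal order type a power of $\omega$ is canonical (the block boundaries are recoverable from the linear order together with the coloring — e.g. a block's right endpoint is detected by the order type of the monochromatic final segment, and between consecutive blocks the color changes or the "restart" of the $\omega$-power structure occurs), $g$ induces a bijection $g'$ on the index set $\seqo \setminus \{\emptyset\}$ sending $I^t_G$ onto $I^{g'(t)}_{G'}$, and $g'$ is an order isomorphism of $(\seqo\setminus\{\emptyset\}, \leq_{lex})$ to itself.

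Next I would argue that the only order automorphism of $(\seqo\setminus\{\emptyset\}, \leq_{lex})$ that can arise here is the identity, and moreover that $\tau_G(\lambda_t) = \tau_{G'}(\lambda_{g'(t)})$ for all $t$ (color preservation) forces, together with the order type constraint $\omega^{2\tau_G(\lambda_t)} = \omega^{2\tau_{G'}(\lambda_{g'(t)})}$, that $g'$ respects lengths: $|t| = |g'(t)|$. An order automorphism of $\seqo\setminus\{\emptyset\}$ under $\leq_{lex}$ which is length-preserving and carries, for each $t$, the immediate-successor block structure correctly must fix each $t$; here I would use that $\emptyset$ is sent to $\emptyset$ (handled separately since $L_G$ replaces $\seqo\setminus\{\emptyset\}$, but the root coherence of the labeling $\lambda$ is what matters) and then induct on $|t|$: once $g'$ fixes $t \restriction (|t|-1)$, the blocks corresponding to $t^{\smallfrown}\langle i\rangle$ for $i\in\omega$ appear in $\leq_{lex}$-order among the initial segments of blocks extending $t\restriction(|t|-1)$, so $g'$ must fix each $t$ as well. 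Hence $g' = \mathrm{id}$, which immediately gives $\tau_G(\lambda_t) = \tau_{G'}(\lambda_t)$ for all $t$.

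From $g' = \mathrm{id}$ and the fact that $\langle k_n \mid n\in\omega\rangle$ lists every natural number infinitely often, every $s \in \seqo$ equals $\lambda_t$ for suitable $t$, so $\tau_G(s) = \tau_{G'}(s)$ for all $s \in \seqo$; in particular the quantifier-free type of every finite sequence agrees in $G$ and $G'$, and taking sequences of length $2$ one reads off that $G$ and $G'$ have the same edge relation, i.e. $G = G'$. Finally, knowing $G = G'$, I would return to $g$ itself: it is now a color-preserving automorphism of $L_G$ fixing every block $I^t_G$ setwise; since each block has order type $\omega^{2\tau_G(\lambda_t)}$, an ordinal, it is rigid as a linear order, so $g$ is the identity on each block and hence $g = \mathrm{id}$ on all of $\omega$.

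The main obstacle I expect is the first paragraph: making precise and rigorous the claim that the block decomposition is canonically recoverable from $(L_G, c_G)$, so that $g$ genuinely induces a well-defined order-automorphism $g'$ of the index set rather than something that mixes pieces of different blocks. The cleanest route is probably to characterize the blocks internally — e.g. as the maximal convex monochromatic subsets, noting that consecutive blocks along $\leq_{lex}$ either have different colors or, when they share a color, are still separated because the order type $\omega^{2k}$ forces a definable boundary point — and then check that $g$, preserving order and color, must preserve this internal characterization. Once that is in place the rest is the routine induction on length sketched above, mirroring the rigidity arguments already used for $G_T$ in Lemmas \ref{lemma1} and \ref{lemma2}.
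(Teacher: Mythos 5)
Your argument is essentially correct and rests on the same two facts as the paper's proof: colors distinguish the lengths of the indexing sequences (since the coding $e$ assigns disjoint sets of codes to quantifier-free types of different arities), and the relevant linear orders are well-ordered, hence rigid. The obstacle you flag in your last paragraph --- that two consecutive blocks might share a color, making the block decomposition hard to recover internally --- in fact never occurs: any two sequences $t,t'\in{}^n\omega$ of the same length always have a longer sequence $\leq_{lex}$-between them, so $\leq_{lex}$-adjacent elements of $\seqo\setminus\{\emptyset\}$ have different lengths, and therefore blocks that are adjacent in $L_G$ always carry different colors. Once this is observed, the blocks are precisely the maximal monochromatic intervals, $g'$ is well-defined, and your proposed handling of the ``shared color'' case can simply be dropped. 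The paper packages the same facts a bit more efficiently: instead of establishing a global block-to-block correspondence, it uses the disjointness of color ranges to note that $g$ restricts to an isomorphism between $L^n_G=\bigcup\{I^t_G \mid t\in{}^n\omega\}$ and $L^n_{G'}$, each of which is a well-order; uniqueness of isomorphisms between well-orders then gives both $\tau_G(\lambda_t)=\tau_{G'}(\lambda_t)$ and $g\restriction L^n_G=\mathrm{id}$ in one stroke, which is what your separate treatment of $g'$ and of the per-block rigidity accomplishes.
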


\begin{proof}
Since $g$ must be color preserving and blocks corresponding to sequences
of different length are necessarily colored with different colors,
we get that the restriction $g_n$ of $g$ to any suborder of $L_G$ with
domain $L^n_G = \bigcup \{ I^t_G \mid t \in {}^n \omega \}$ must be an
isomorphism between $L^n_G$ and $L^n_{G'}$. But any $L^n_G$ is now a
well-order, therefore one inductively gets $\tau_G(\lambda_t) =
\tau_{G'}(\lambda_t)$ for any $t \in {}^n \omega$, which implies that
$G = G'$ and hence $L_G = L_{G'}$. Moreover,
each $g_n$ being defined on a well-order, it must be the identity,
hence $g = id$.
\end{proof}

\begin{theorem}\label{theorcoloredorders}
If $S$ is any analytic quasi-order on the space of colored linear orderings on $\omega $
such that ${\preceq_=}\subseteq S \subseteq {\preceq_\geq}$, then $(S, \cong_=)$
is invariantly universal. In particular, $\preceq_=, \preceq_\ge$, as
well as any quasi-order of the form $\preceq_R$ for $R
\subseteq {\ge}$, are invariantly universal.
\end{theorem}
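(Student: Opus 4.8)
The plan is to apply Theorem \ref{theorsaturation} with $Z$ the space of colored linear orderings on $\omega$, $S$ the given quasi-order (satisfying ${\preceq_=} \subseteq S \subseteq {\preceq_\geq}$), $E = {\cong_=}$ the relation of color-preserving isomorphism, and $f \colon \G \to Z$ the composite $G \mapsto L_G$ of the construction from \cite{Camerlo} recalled above. I must check the three steps (a), (b), (c) listed after Theorem \ref{theorsaturation}.

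For step (a): by the results of \cite{Camerlo} quoted above, $G \sqsubseteq G' \imp L_G \preceq_= L_{G'}$ and $L_G \preceq_\geq L_{G'} \imp G \sqsubseteq G'$; since ${\preceq_=} \subseteq S \subseteq {\preceq_\geq}$, the map $f \colon G \mapsto L_G$ witnesses ${\sqsubseteq_\G} \leq_B S$. I also need that $f$ reduces $=_\G$ (equivalently $\cong_\G$) to $\cong_=$. One direction is clear: if $G = G'$ then $L_G = L_{G'}$, so $L_G \cong_= L_{G'}$. Conversely, if $L_G \cong_= L_{G'}$ with $G, G' \in \G$, then Lemma \ref{lemmaidentity} gives $G = G'$. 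Hence $f$ simultaneously witnesses ${\sqsubseteq_\G} \leq_B S$ and ${=_\G} \leq_B {\cong_=}$, completing step (a). (Borelness of $G \mapsto L_G$ is routine, being a coding of the explicit construction.)

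For step (b): the isomorphism relation ${\cong_=}$ on colored linear orderings is itself the orbit equivalence relation of a Polish group action. Indeed, a color-preserving isomorphism of linear orderings on $\omega$ is just a permutation of $\omega$; so take $Y = S_\infty$ acting in the natural (Borel) way on $LO \times {}^\omega\omega$ — namely $p \cdot (L, c) = (p \cdot L, c \circ p^{-1})$ — whose orbit equivalence relation is exactly ${\cong_=}$. Then $W = Z$, and $g$ is the identity, so that $E \leq_B E_a$ trivially. The map $\Sigma \colon \G \to G(S_\infty)$ then assigns to $G \in \G$ the stabilizer of $L_G$, i.e.\ the group $\mathrm{Aut}(L_G)$ of color-preserving automorphisms of the colored order $L_G$.

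Step (c), proving $\Sigma$ is Borel, is the main obstacle, and here the key is that the trees $G \in \G$ are rigid (Corollary \ref{cornontrivial}). By (the proof of) Lemma \ref{lemmaidentity}, the only color-preserving automorphism of $L_G$ is the identity for every $G \in \G$: indeed, each $L^n_G$ is a well-order on which any color-preserving automorphism must restrict to an automorphism, hence to the identity; so $\mathrm{Aut}(L_G) = \{id\}$ for all $G \in \G$. Thus $\Sigma$ is the constant map $G \mapsto \{id\}$ on $\G$, which is trivially Borel. (Note that, unlike the case of epimorphisms between graphs in Subsection \ref{ref1}, we do not even need a Lemma analogous to \ref{lemmaborelmap}, since the stabilizers here are all trivial.) Having verified (a), (b) and (c), Theorem \ref{theorsaturation} yields that $(S, {\cong_=})$ is invariantly universal. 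In particular, taking $S = {\preceq_=}$, $S = {\preceq_\geq}$, or $S = {\preceq_R}$ for any $R \subseteq {\geq}$ (each of which satisfies ${\preceq_=} \subseteq S \subseteq {\preceq_\geq}$, and has associated isomorphism relation ${\cong_=}$ since $\preceq_R$ is a morphism relation), we obtain that all of these are invariantly universal.
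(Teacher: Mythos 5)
Your proof is correct and follows essentially the same route as the paper: apply Theorem \ref{theorsaturation} with $f \colon G \mapsto L_G$, realize $\cong_=$ as the orbit equivalence relation of the natural $S_\infty$-action on $LO \times {}^\omega\omega$, and observe via Lemma \ref{lemmaidentity} that every $L_G$ has trivial stabilizer, so $\Sigma$ is constant and hence Borel. The only difference is that you spell out steps (a)--(c) in more detail, including the verification that ${\preceq_=} \subseteq S \subseteq {\preceq_\geq}$ yields ${\sqsubseteq_\G} \leq_B S$, which the paper leaves implicit.
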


\begin{proof}
First observe that $\cong_=$ is induced by an obvious action of
$S_\infty$ on $LO \times {}^\omega \omega$ sending $(p,(L,c)) \in
S_\infty \times (LO \times {}^\omega \omega)$ into
$(L',c')$, where $L' = j_\L(p,L)$ and $c'(n) = c(p^{-1}(n))$. By Lemma
\ref{lemmaidentity}, the map $G \mapsto L_G$ reduces  $=_\G$ to $\cong_=$.
Finally, since
any color preserving automorphism of an $L_G$ must be the identity by Lemma
\ref{lemmaidentity} again, the function $\Sigma$ assigning to each $G
\in \G$ the
stabilizer of $L_G$ is constantly equal to $\{ id \} \in
G(S_\infty)$, hence Borel. Applying now Theorem \ref{theorsaturation}
we get the result.
\end{proof}

\begin{corollary}
If $R'$ is any quasi-order on $\omega$ containing an infinite descending sequence or an infinite anti-chain (i.e. is not a wqo) then $(\preceq_{R'}, \cong_=)$ is invariantly
universal.
\end{corollary}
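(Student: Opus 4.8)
The plan is to deduce the statement from Theorem~\ref{theorcoloredorders} by means of a suitable recoloring of the orders $L_G$. Since $R'$ is not a wqo, it contains either an infinite antichain or an infinite strictly descending sequence, and in either case I first extract a sequence $\langle n_i \mid i \in \omega \rangle$ of pairwise distinct elements of $\omega$ such that $n_i\, R'\, n_j \imp i \geq j$ for all $i,j \in \omega$. In the antichain case just take the $n_i$ pairwise $R'$-incomparable, so that $\neg(n_i\, R'\, n_j)$ whenever $i \neq j$. In the descending case write the sequence as $n_0 > n_1 > n_2 > \dots$ with each step strict, i.e.\ $n_{i+1}\, R'\, n_i$ and $\neg(n_i\, R'\, n_{i+1})$ for every $i$; if $i<j$ and $n_i\, R'\, n_j$ held, then composing with $n_j\, R'\, n_{i+1}$ (obtained by transitivity along the chain $n_j\, R'\, n_{j-1}\, R'\, \dots\, R'\, n_{i+1}$) we would get $n_i\, R'\, n_{i+1}$, a contradiction; the same computation also shows the $n_i$ are distinct (if $n_i = n_j$ with $i<j$ then $n_i\, R'\, n_j$ by reflexivity). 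Now let $Q = \{(i,j) \in \omega \times \omega \mid n_i\, R'\, n_j\}$: reflexivity and transitivity of $R'$ make $Q$ a quasi-order on $\omega$, and by construction $Q \subseteq {\geq}$. Hence, by Theorem~\ref{theorcoloredorders}, the pair $(\preceq_Q, \cong_=)$ is invariantly universal.

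Next I introduce the recoloring. Let $h \colon \omega \to \omega$, $h(k) = n_k$; this is injective, and I set $\Phi \colon LO \times {}^\omega \omega \to LO \times {}^\omega \omega$, $\Phi(L,c) = (L, h \circ c)$. Then $\Phi$ is continuous and injective, and I claim it simultaneously reduces $\preceq_Q$ to $\preceq_{R'}$ and $\cong_=$ to $\cong_=$. For the first claim: a map $g$ witnesses $(L,c) \preceq_Q (L',c')$ if and only if $g$ is an embedding of $L$ into $L'$ with $c(m)\, Q\, c'(g(m))$ for all $m$, i.e.\ with $n_{c(m)}\, R'\, n_{c'(g(m))}$, i.e.\ with $(h \circ c)(m)\, R'\, (h \circ c')(g(m))$, which is exactly the condition that $g$ witnesses $\Phi(L,c) \preceq_{R'} \Phi(L',c')$. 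For the second claim: since $\cong_=$ is color-preserving isomorphism and $h$ is injective, $g$ witnesses $(L,c) \cong_= (L',c')$ (namely $g$ is an order isomorphism with $c = c' \circ g$) if and only if it witnesses $\Phi(L,c) \cong_= \Phi(L',c')$ (namely $g$ is an order isomorphism with $h \circ c = (h \circ c') \circ g$). Note also that $R'$ reflexive gives $\cong_= \subseteq {\preceq_{R'}}$, hence $\cong_= \subseteq E_{\preceq_{R'}}$, so $(\preceq_{R'},\cong_=)$ is a legitimate pair.

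Finally I transfer invariant universality. Let $R$ be an arbitrary analytic quasi-order; by invariant universality of $(\preceq_Q, \cong_=)$ there is a Borel $\cong_=$-invariant set $B \subseteq LO \times {}^\omega \omega$ with ${\preceq_Q \restriction B} \sim_B R$. Since $\Phi$ is continuous and injective, $\Phi(B)$ is Borel and $\Phi \restriction B$ is a Borel isomorphism of $B$ onto $\Phi(B)$ carrying $\preceq_Q \restriction B$ onto $\preceq_{R'} \restriction \Phi(B)$, so ${\preceq_{R'} \restriction \Phi(B)} \sim_B R$. It remains to check that $\Phi(B)$ is $\cong_=$-invariant: if $(L,c) \in B$ and $(L'',c'') \cong_= \Phi(L,c) = (L, h\circ c)$ via an order isomorphism $g$, then $c'' = (h \circ c) \circ g$ takes all its values in ${\rm range}(h)$, hence $c'' = h \circ c'$ for $c' := c \circ g$ (well defined since $h$ is injective); then $(L'',c') \cong_= (L,c)$ via $g$, so $(L'',c') \in B$ by invariance of $B$, and therefore $(L'',c'') = \Phi(L'',c') \in \Phi(B)$. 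This proves that $(\preceq_{R'}, \cong_=)$ is invariantly universal. (Alternatively one may apply Theorem~\ref{theorsaturation} directly to $(\preceq_{R'},\cong_=)$ with $f(G) = \Phi(L_G) = (L_G, h \circ c_G)$: such $f$ witnesses ${\sqsubseteq_\G} \leq_B {\preceq_{R'}}$, using the construction of \cite{Camerlo} together with reflexivity of $R'$ for one direction and the property $n_i\, R'\, n_j \imp i \geq j$ for the other, and ${=_\G} \leq_B {\cong_=}$ by Lemma~\ref{lemmaidentity} and injectivity of $h$, while the stabilizer $\Sigma(G)$ of $\Phi(L_G)$ under the $S_\infty$-action coincides with the stabilizer of $L_G$, hence is $\{id\}$ by Lemma~\ref{lemmaidentity}, hence Borel.) There is no serious obstacle here beyond this bookkeeping; the one point that needs care is the choice of the sequence $\langle n_i \rangle$ — in particular noticing that strict descent already forces $\neg(n_i\, R'\, n_j)$ for $i<j$ — together with the repeated use of injectivity of $h$ to keep $\cong_=$ and the stabilizer map unchanged.
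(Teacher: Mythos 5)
Your proof is correct and is essentially the argument the paper intends (no explicit proof is given for the corollary, but the natural route from Theorem~\ref{theorcoloredorders} is exactly what you do): extract a sequence $\langle n_i\rangle$ with $n_i\, R'\, n_j \Rightarrow i\geq j$, pull back $R'$ to a quasi-order $Q\subseteq{\geq}$, and transfer invariant universality via the injective recoloring $h(k)=n_k$. Your parenthetical alternative, applying Theorem~\ref{theorsaturation} directly to $G\mapsto (L_G,h\circ c_G)$, is an equally valid way to package the same idea.
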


Now we want to prove that the relation $\preceq_=$, when restricted to
colored orderings whose support is fixed to
be the order $\QQ$, is still
invariantly universal (such relation will be denoted by $\preceq_\QQ$). To begin with, we slightly modify a
construction from \cite[Lemma 3]{Camerlo}. Let $LO^*$ be the space of linear
orderings on $\omega \times \QQ$. To each $L = (L,c) \in LO \times
{}^\omega \omega$, associate $L^* = (L^*,c^*) \in LO^* \times
{}^{\omega \times \QQ} \omega$ in the following way:
\begin{itemize}
\item $L^*$ is the lexicographic product of $L$ and $\leq $ (so $L^*$ is a countable dense linear ordering without
endpoints);
\item $c^*(n,0) = 2c(n)+1$, and $c^*(n,q)
= 2 \eta(q)$ if $q \neq 0$, where $\eta$ is any bijection between $\QQ
\setminus \{ 0 \}$ and $\omega$.
\end{itemize}
As each colored ordering of the type
$L^*$ can
be Borel-in-$L$ identified with a coloring on
$\QQ$, we can identify $L^*$ with its coded version on $\QQ$. Now it
is easy to check that
the Borel map $L \mapsto L^*$ is a reduction of $\preceq_=$ on $LO
\times {}^\omega \omega$ to $\preceq_\QQ$. For one direction, if $g
\colon \omega \to \omega$ is an order and color preserving embedding
between $L$ and $L'$, then the map $(n,q) \mapsto (g(n),q)$ is a
witness of $L^* \preceq_\QQ (L')^*$. Conversely, any witness $h$ of $L^*
\preceq_{ \QQ }(L')^*$ must be such that $h(n,0) = (n',0)$ for some $n' \in
\omega$, and this induces an order and color preserving embedding $n\mapsto n'$
between $L$ and $L'$.

\begin{theorem}\label{theorQuniversal}
The quasi-order $\preceq_\QQ$ on ${}^\QQ \omega$ is invariantly universal.
\end{theorem}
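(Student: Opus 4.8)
The natural strategy is to invoke Theorem~\ref{theorsaturation} with $Z = {}^\QQ \omega$, $S = E = {\preceq_\QQ}$ (so $E_S = {\preceq_\QQ}$ as well, since $\preceq_\QQ$ turns out to have the same associated isomorphism relation as its bi-morphism relation here), and the composite reduction $f \colon \G \to {}^\QQ \omega$ given by $G \mapsto (L_G)^*$, i.e.\ first apply the colored-order construction $G \mapsto L_G$ from Subsection~\ref{coloredlinearorders} and then the doubling construction $L \mapsto L^*$ just introduced. The point of threading through $L^*$ rather than working directly with $\preceq_=$ is precisely to land in the fixed-support space ${}^\QQ\omega$. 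For step (a), I would check that $f$ witnesses ${\sqsubseteq_\G} \leq_B {\preceq_\QQ}$: this is immediate by composing Theorem~\ref{theorcoloredorders}'s reduction (embeddability to $\preceq_=$, via $G \mapsto L_G$) with the reduction $L \mapsto L^*$ of $\preceq_=$ to $\preceq_\QQ$ established right before the statement. I would also check that $f$ witnesses ${=_\G} \leq_B {\cong_\QQ}$ (equivalently ${\cong_\G} \leq_B E$): by Lemma~\ref{lemmaidentity} the map $G \mapsto L_G$ already reduces $=_\G$ to $\cong_=$, and one checks that $L \mapsto L^*$ sends $\cong_=$-inequivalent colored orders to $\cong_\QQ$-inequivalent ones — indeed if $L^* \cong_\QQ (L')^*$ then, as in the paragraph before the theorem, the isomorphism must respect the ``$q=0$'' layer (those are the points colored with odd colors) and hence restricts to a color-preserving isomorphism of $L$ with $L'$, so $L \cong_= L'$, whence $G = G'$.

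For step (b), I would take $Y = S_\infty$, $W = {}^\QQ\omega$, and $a$ the obvious logic-type action of $S_\infty$ on colored orderings of $\QQ$ (permute the underlying set $\QQ$, carry the coloring along), whose orbit equivalence relation is exactly $\cong_\QQ = E$; thus $g$ is the identity and $E \leq_B E_a$ holds trivially. For step (c), I must show that $\Sigma \colon \G \to G(S_\infty)$, assigning to $G \in \G$ the stabilizer of $(L_G)^*$ under $a$ — i.e.\ the group of color-preserving automorphisms of $(L_G)^*$ as an ordering of $\QQ$ — is Borel. Here is where the analysis of $L^*$ pays off: a color-preserving automorphism of $L^*$ must preserve the set of points colored by odd numbers, which is a copy of the domain of $L$ sitting inside $\QQ$ as $\{(n,0) : n \in \omega\}$, and on that set it is forced (by Lemma~\ref{lemmaidentity}, since the automorphism restricts to a color-preserving automorphism of $L_G$, which is the identity) to be the identity. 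But an automorphism of a dense linear order without endpoints that fixes a dense subset pointwise is the identity; and $\{(n,0)\}$ is dense in $L^* = L \times \QQ$ ordered lexicographically. Hence $(L_G)^*$ is rigid for every $G \in \G$, so $\Sigma$ is the constant map with value $\{ id \}$, which is trivially Borel.

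With all three steps in place, Theorem~\ref{theorsaturation} applies and yields that $(\preceq_\QQ, \cong_\QQ)$ is invariantly universal, which (since $\cong_\QQ = E_{\preceq_\QQ}$ on the relevant space) is the assertion that $\preceq_\QQ$ is invariantly universal in the morphism-relation sense of the paper.

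**Main obstacle.** I expect the routine-looking but genuinely load-bearing point to be verifying that $L \mapsto L^*$ behaves well with respect to the isomorphism relations — concretely, that a color-preserving automorphism (or isomorphism) of $(L_G)^*$ cannot do anything nontrivial. The key insight making this easy is the parity trick in the definition of $c^*$ (odd colors $2c(n)+1$ on the $q=0$ layer, even colors $2\eta(q)$ elsewhere), which pins down the embedded copy of $L$ inside $\QQ$ and lets one reduce everything to Lemma~\ref{lemmaidentity} plus the elementary fact that a dense-order automorphism fixing a dense set is trivial. Once rigidity of $(L_G)^*$ is secured, the Borelness of $\Sigma$ — usually, per the remarks after Theorem~\ref{theorsaturation}, the delicate part — is immediate, so there is no real difficulty left.
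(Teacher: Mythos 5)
Your overall plan is the same as the paper's — same map $f\colon G\mapsto L_G^*$, same action $a$ of $S_\infty$ on ${}^\QQ\omega$, same goal of showing $\Sigma$ is constantly $\{id\}$ by proving rigidity of each $L_G^*$ — and steps (a) and (b) are carried out correctly. However, the rigidity argument in step (c) has a genuine gap: the claim that $\{(n,0) : n\in\omega\}$ is dense in $L^* = L\times\QQ$ (lexicographic) is false. For any fixed $n$, the interval $\{n\}\times(0,1)$ is a nonempty open interval of $L^*$ containing no point of the form $(m,0)$; more structurally, $\{(n,0):n\}$ is order-isomorphic to $L_G$, which is a lexicographic sum of well-orders and hence scattered, so it cannot be dense in anything. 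Thus the general principle ``an automorphism of a DLOWE fixing a dense set pointwise is trivial'' does not apply here, and knowing that $h$ is the identity on the odd-colored layer does not by itself force $h=id$.

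The paper closes this gap with two further observations that you need to supply. First, for $q\neq 0$ the color $2\eta(q)$ determines $q$ uniquely (since $\eta$ is a bijection $\QQ\setminus\{0\}\to\omega$), so any color-preserving automorphism $h$ must send $(n,q)$ to $(n',q)$ for some $n'$, i.e.\ it preserves the second coordinate. Second, one rules out $n'\neq n$ by an order argument using what is already known: if, say, $q>0$ and $h(n,q)=(n',q)$ with $n'>_{L_G}n$, then since $h(n,0)=(n,0)$ and $(n',0)$ lies strictly between $(n,0)$ and $(n',q)$ in $L^*$, some $(n,q')$ with $0<q'<q$ must map to $(n',0)$ — but that point has odd color while $(n,q')$ has even color $2\eta(q')$, a contradiction. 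With this, rigidity follows and the rest of your argument goes through.
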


\begin{proof}
The $\preceq_{ \QQ }$-isomorphism relation $\cong_\QQ$ is induced by
the obvious action of the group $Aut(\QQ)$ of automorphisms of $\QQ$.
Now consider the map $\G \to {}^\QQ \omega \colon G \mapsto L_G^*$,
defined as above. We first want to show that this is a reduction of
$=_{ \G }$ to $\cong_\QQ$. For the nontrivial direction, if $h$
witnesses $L_G^* \cong_\QQ L_H^*$ then its restriction to $\omega
\times \{0\}$ must have range $\omega \times \{ 0 \}$, and hence it
induces a witness of $L_G \cong_= L_H$: but this implies $G = H$ by
Lemma \ref{lemmaidentity}. Finally, we prove that each $L_G^*$ (where
$G \in \G$) has no nontrivial color preserving automorphism (so that
the map $\Sigma$ assigning to each $G \in \G$ the $Aut(\QQ)$-stabilizer
of $L_G^*$ is constantly equal to the identity group, hence a Borel
map). If $h$ is a color preserving automorphism of $L_G^*$, then it
induces a color preserving automorphism of $L_G$: but since any such
isomorphism must be the identity by Lemma \ref{lemmaidentity}, we must
have $h(n,0) = (n,0)$ for every $n \in \omega$. Moreover, for every $0
\neq q \in \QQ$ and $n \in \omega$ we must have $h(n,q) = (n',q)$ for
some $n' \in \QQ$, since points of the form $(n,q)$ are the only ones
with color $2 \eta(q)$. Finally, since $h(n,0) = (n,0)$ we must have
also $h(n,q) = (n,q)$ for every $q \in \QQ$, because if e.g.\  $h(n,q)
= (n',q)$ with $q > 0$ and $n'$ bigger than $n$ in the order $L_G$,
then there must be some $q'$ strictly  between $0$ and $q$ such that
$h(n,q') = (n',0)$, which is clearly impossible as $q' \neq 0$. Now we
can again apply Theorem \ref{theorsaturation} and get the result.
\end{proof}

A linear order is \emph{non-scattered} if it contains a subset isomorphic to the rationals.
We end this section by extending Theorem \ref{theorQuniversal} to any countable non-scattered linearly ordered set $L$, that is considering the relation $\preceq_L$ of order and color
preserving embedding between colorings on $L$. As in \cite[Corollary 4]{Camerlo}, we define a continuous map from ${}^\QQ
\omega$ into ${}^L \omega$ which assigns to each coloring $c$ on $\QQ$ a
coloring $\hat{c}$ on $L$ and reduces $\preceq_\QQ$ to
$\preceq_L$. First of all, the fact that any non-scattered linear order can be written as a $1+ \QQ +1$ sum of non-empty orders allows to write $L$ as $L' + \sum_{q
  \in \QQ} (U_q + \{ r_q \} + V_q) + L''$, where $U_q$ and $V_q$ are
non-scattered. For $c \in {}^\QQ \omega$ define $\hat{c} \in {}^L
\omega$ by letting $\hat{c}(x) = 0$ if $x \in L' \cup L'' \cup \bigcup
\{ U_q, V_q \mid q \in \QQ\}$, and $\hat{c}(r_q) = c(q) + 1$.

\begin{theorem} \label{theornonscattered}
If $L$ is a countable non-scattered linear order, the quasi-order $\preceq_L$ on ${}^L \omega$ is invariantly universal.
\end{theorem}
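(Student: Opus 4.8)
The plan is to feed everything into Theorem~\ref{theorsaturation}. Take $S = {\preceq_L}$, an analytic quasi-order on the standard Borel space ${}^L \omega$, and let $\cong_L$ be the associated $\preceq_L$-isomorphism relation, i.e.\ the orbit equivalence relation of the continuous action $a$ of the Polish group $Y = Aut(L)$ (a closed subgroup of $S_\infty$) on ${}^L \omega$ given by $(\pi,c) \mapsto c \circ \pi^{-1}$; thus $E = \cong_L$, $W = Z = {}^L \omega$, and $g$ is the identity. As the map $f \colon \G \to {}^L \omega$ I would take $f(G) = \widehat{L_G^*}$, the composition of $G \mapsto L_G^*$ from the proof of Theorem~\ref{theorQuniversal} with the continuous map $c \mapsto \hat c$ described just before the present statement; this is Borel. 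Composing the reductions ${\sqsubseteq_\G} \leq_B {\preceq_\QQ}$ (from the proof of Theorem~\ref{theorQuniversal}) and ${\preceq_\QQ} \leq_B {\preceq_L}$ (established before the statement), we get that $f$ witnesses ${\sqsubseteq_\G} \leq_B {\preceq_L}$.

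Next I would check that $f$ also witnesses ${=_\G} \leq_B {\cong_L}$, only the nontrivial direction requiring work. Suppose $\pi \in Aut(L)$ is a color preserving isomorphism from $\widehat{L_G^*}$ onto $\widehat{L_H^*}$, with $G,H \in \G$. By the definition of $\hat c$, the set $M = \{ r_q \mid q \in \QQ \}$ is exactly the set of points receiving nonzero color in \emph{every} $\widehat{L_G^*}$ (it depends only on the fixed decomposition $L = L' + \sum_{q \in \QQ}(U_q + \{ r_q \} + V_q) + L''$, not on the coloring), so $\pi$ maps $M$ onto $M$. Since $q \mapsto r_q$ is an order isomorphism $\QQ \to (M,\leq_L)$, the conjugate $\sigma$ of $\pi \restriction M$ lies in $Aut(\QQ)$, and from $\widehat{L_G^*}(r_q) = L_G^*(q)+1$ we read off $L_G^*(q) = L_H^*(\sigma(q))$ for all $q$, i.e.\ $\sigma$ witnesses $L_G^* \cong_\QQ L_H^*$. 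As $G \mapsto L_G^*$ reduces ${=_\G}$ to ${\cong_\QQ}$ (proof of Theorem~\ref{theorQuniversal}), this gives $G=H$.

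The remaining, and I expect main, step is to prove that the map $\Sigma \colon \G \to G(Aut(L))$ sending $G$ to the stabilizer $\{ \pi \in Aut(L) \mid a(\pi, \widehat{L_G^*}) = \widehat{L_G^*} \}$ is Borel. I claim it is in fact constantly equal to $H_L = \{ \pi \in Aut(L) \mid \pi(r_q) = r_q \text{ for all } q \in \QQ \}$, the pointwise stabilizer of $M$, which is a closed subgroup of $Aut(L)$ (an intersection of point stabilizers) and hence a single point of $G(Aut(L))$; a constant map is Borel. The inclusion $\Sigma(G) \supseteq H_L$ holds because $\widehat{L_G^*}$ vanishes off $M$ while every $\pi \in H_L$ fixes $M$ pointwise (hence permutes $L \setminus M$); for $\Sigma(G) \subseteq H_L$, a $\pi$ fixing $\widehat{L_G^*}$ maps $M$ onto $M$ and, by the argument of the previous paragraph with $H=G$, induces a color preserving automorphism $\sigma$ of the colored order $L_G^*$, which must be the identity since $G \in \G$ (proof of Theorem~\ref{theorQuniversal}); hence $\pi$ fixes every $r_q$. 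With $\Sigma$ Borel, Theorem~\ref{theorsaturation} applies and yields that $(\preceq_L,\cong_L)$, i.e.\ $\preceq_L$, is invariantly universal. The only further points are routine: that $\cong_L$ is an analytic equivalence subrelation of $E_{\preceq_L}$, and that $G \mapsto \widehat{L_G^*}$ is Borel as a composition of Borel maps.
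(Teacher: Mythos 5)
Your proof is correct and follows essentially the same route as the paper: apply Theorem~\ref{theorsaturation} to $f(G) = \widehat{L_G^*}$, reduce the isomorphism check to the $\QQ$-case via the points $r_q$, and show that $\Sigma$ is constant. Your description of that constant value as the pointwise stabilizer of $\{r_q \mid q \in \QQ\}$ in $Aut(L)$ is a tidy, equivalent reformulation of the paper's conjunction of conditions on $L'$, $L''$, $U_q$, $V_q$, $r_q$, since any order automorphism of $L$ fixing every $r_q$ automatically fixes $L'$, $L''$ and each $U_q$, $V_q$ setwise.
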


\begin{proof}
We will again apply Theorem \ref{theorsaturation} to the map $f \colon
\G \to {}^L \omega \colon G
\mapsto \hat{L}_G^*$, hence it is enough to show that the
hypotheses of that theorem are satisfied. The $\preceq_L$-isomorphism relation
is induced by the natural
action of the group $Aut(L)$ of automorphisms of $L$ on ${}^L
\omega$. To show that $f$ reduces $=_{ \G }$ to $\cong_L$, let $h$ be an isomorphism
between $f(G)$ and $f(H)$. Clearly $h(r_q)$ must be of the form
$r_{q'}$ for some $q' \in \QQ$, as these are the unique points with
non-null color. But then $h$ induces an isomorphism between
$L^*_G$ and $L^*_H$, which in turn implies $G = H$ by the
proof of Theorem \ref{theorQuniversal}.

It remains to show that the map $\Sigma$
assigning to each $G \in \G$ the $Aut(L)$-stabilizer of $f(G) =
\hat{L}^*_G$ is
Borel. Call each suborder of $L$ of the form $L_q = U_q + \{ r_q \} + V_q$ a
\emph{block} (\emph{associated to} $q$), and let $h$ be a color
preserving automorphism
of $f(G)$. We first want to show that for every $q \in \QQ$ there is $q'
\in \QQ$ such that $h \restriction L_q$ is an order and color
preserving isomorphism between $L_q$ and $L_{q'}$ (from this it follows
that $h \restriction L'$ and $h \restriction L''$ are order and color
preserving automorphisms of $L'$ and $L''$, respectively). This follows from
the fact that each block contains exactly one point with color
different from $0$, the image under $h$ of each block (which, in particular, is
an interval) must be an interval, and that each interval of $L$ which
is not included in a single block (and is included neither in $L'$
nor in $L''$) must contain more than one element with non-null color
as the order induced by $L$ on blocks is dense. The second step is to
show that the image of $L_q$ under $h$ must be itself. This is because
if $L_q$ is sent to $L_{q'}$ by $h$, then $h$ induces a color
preserving
automorphism on $L^*_G$ sending $q$ to $q'$: but since any
color preserving automorphism on $L^*_G$ must be the identity by the proof of
Theorem \ref{theorQuniversal}, we get $q=q'$. By the observations
above, $h \colon L\to L$ is a color preserving automorphism of $f(G)$
if and only  if the
following properties hold:
\begin{itemize}
\item $h \restriction L'$ and $h \restriction L''$ are
  automorphisms of $L'$ and $L''$;
\item $h \restriction U_q$ and $h \restriction V_q$ are
  automorphisms of $U_q$ and $V_q$ for every $q \in \QQ$;
\item $h(r_q) = r_q$ for every $q \in \QQ$.
\end{itemize}
Since these conditions are independent of $G$, we have that
all $f(G)$ have the same fixed stabilizer, so that the map $\Sigma$
 is constant (hence,
in particular, Borel).
\end{proof}

\begin{remark}
As observed in \cite{FriMot} about the invariant universality of
embeddability between (ordered) graphs, most of the results of this
paper have an effective counterpart as well, i.e.\ the one obtained by
systematically replacing ``analytic'' and ``Borel'' by (lightface)
$\Sigma^1_1$ and $\Delta^1_1$ in all definitions, statements and
proofs. However, our proof  of Theorem \ref{theornonscattered}
explicitly uses the countably many parameters $Aut(L')$, $Aut(L'')$,
$Aut(U_q)$ and $Aut(V_q)$ to show that the stabilizer map is Borel.
Therefore this proof does not give a proof of the effective counterpart
of Theorem \ref{theornonscattered} (whether such a result holds is
still an open problem), but just of the effective counterpart
\emph{relativized to those parameters}. A similar remark also holds for
Theorem \ref{theoreq2} below.
\end{remark}

\subsection{Dense order preserving embeddings}\label{dop}

Following \cite{MarRos}, a function $g \colon \QQ \to \QQ$ is
\emph{dense order preserving} if it is increasing and for all
$q_0,q_1,r_0,r_1 \in \QQ$ with $f(q_0) < r_0 < r_1 < f(q_1)$ there
exists $q \in \QQ$ such that $r_0 < f(q) < r_1$. This means that ${\rm
  range}(g)$ is dense in the (real) interval $(\inf {\rm range}(g),
\sup {\rm range}(g))$. Moreover, in \cite{MarRos} the dense order
preserving functions were characterized as the restrictions to $\QQ$ of
those continuous increasing functions $g$ from the real line $\RR$
into itself such
that $g(\QQ) \subseteq \QQ$.

Given a quasi-order $R$ on an at most countable set $C$, consider the
analytic quasi-order $\leq_{dop}^R$ on the Polish space ${}^\QQ C$ of
$C$-colorings of $\QQ$ defined by $c \leq_{dop}^R c'$ if and only if
there is a dense order preserving function $g \colon \QQ \to \QQ$ such
that $c(q)\, R\, c'(g(q))$ for every $q \in \QQ$. Extending previous
results of \cite{MarRos}, it was proved in \cite{Camerlo} that when $R$ is one of the
relations $=$ on
$\omega$, $\geq$ on
$\omega$, or $=$ on $2$ (which for simplicity of notation will be
denoted in the sequel by $=$, $\geq$ and $=_2$, respectively) the
corresponding quasi-order $\leq_{dop}^R$
becomes complete analytic. In this section we want to further extend
such results by showing that these quasi-orders are in fact invariantly universal.

The case of $=_2$ implies that of $=$ but, as in
\cite{Camerlo}, dealing with $=$ and $\geq$
simultaneously will provide results for all analytic quasi-orders pinched between $\leq_{dop}^=$ and $\leq_{dop}^{\geq }$. Fix partitions $\{ Q, Q_n \mid n \in \omega \}$ and
$\{ P_n \mid n \in \omega \}$ of $\QQ$ into dense subsets, and for
$\nu \in \seqo \setminus \{ \emptyset \}$, let $Q_\nu = Q_{\nu(0)}
\times \dotsc \times Q_{\nu(|\nu|-1)}$.
If $t \in {}^{l+1} \QQ $, we
call $t$ \emph{good} if there is $\nu \in {}^{l+1} \omega$ such that
$t \in Q_\nu$ (for any good $t$ such a $\nu$ is clearly unique and
will be denoted by $\nu_t$), and \emph{bad} if there is $i \leq l$
such that $t(i) \in Q$. Also, for $n \geq 1$ let $W_n = \{ 1,2,
\dotsc , n\}$.  Finally, let $A =({}^{< \omega} \QQ \setminus \{
\emptyset \} )\times \QQ$ be endowed with
the lexicographic order (where ${}^{< \omega} \QQ$ is ordered
lexicographically and $\QQ$ is ordered by $\leq$, so that $A$ is
isomorphic to $\QQ$ and we can identify them), and for $t \in {}^{< \omega} \QQ \setminus \{
\emptyset \}$ let $J_t = \{ t \} \times \QQ$. We will now construct a
continuous map $G \mapsto G^+$ from the space of graphs on $\omega $ to the space
${}^A \omega$ of colorings on $A$. Let $G$ be given,
and let $(t,q) \in A$ with $|t| = l+1$, some $l \in \omega$.
Let $\alpha(n)$
denote the least $i$ such that $e(i) \in TY_n$.
If $t$ is
good then put:
\begin{itemize}
\item[{\bf -}] $G^+(t,q) = 0$ if $q \in W_{\tau_G(\nu_t) + l}$ and
\item[{\bf -}] $G^+(t,q) = \tau_G(\nu_t) + l + n$ if $q \in P_n \setminus
W_{\tau_G(\nu_t) + l}$.
\end{itemize}
If on the other hand $t$ is bad then set
\begin{itemize}
\item[{\bf -}] $G^+(t,q) = 0$ if $q \in W_{\alpha(l+1) + l}$ and
\item[{\bf -}] $G^+(t,q) =
\alpha(l+2) + l +n$ if $q \in P_n \setminus W_{\alpha(l+1) + l}$.
\end{itemize}

In \cite[Theorem 6]{Camerlo}, it was already proved that if $G$ embeds
into $H$ then $G^+ \leq_{dop}^= H^+$, while if $G^+ \leq_{dop}^\geq
H^+$ then $G$ embeds into $H$. This in particular implies that the map
$G \mapsto G^+$ is a reduction of embeddability between
graphs on $\omega $ to both $\leq_{dop}^=$ and $\leq_{dop}^\geq$.

\begin{theorem}\label{theordopuniversal}
The quasi-orders $\leq_{dop}^=$ and $\leq_{dop}^\geq$ are invariantly
universal. More generally, if $S$ is any analytic quasi-order on
${}^\QQ \omega$ such that $\leq_{dop}^= \subseteq S \subseteq
\leq_{dop}^\geq$ then $(S, \cong_{dop})$ (where $c \cong_{dop} c'$ if
and only if there is a color preserving automorphism of $\QQ$) is
invariantly universal.
\end{theorem}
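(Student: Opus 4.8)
The plan is to apply Theorem~\ref{theorsaturation} to the (continuous, hence Borel) map $f \colon \G \to {}^\QQ \omega$ sending $G$ to $G^+$, where we use the fixed order isomorphism between $A$ and $\QQ$ to regard $G^+$ as an element of ${}^\QQ \omega$. For the remaining data we take $Y = Aut(\QQ)$, the Polish group of order automorphisms of $\QQ$, $W = {}^\QQ \omega$, $a$ the natural action of $Aut(\QQ)$ on $W$, whose orbit equivalence relation is exactly $\cong_{dop}$, $E = \cong_{dop}$, and $g$ the identity (so $E = E_a$ and $E \leq_B E_a$ trivially). Note that $E = {\cong_{dop}} \subseteq E_S$: if $c \cong_{dop} c'$ then some order automorphism of $\QQ$ and its inverse witness $c \leq_{dop}^= c'$ and $c' \leq_{dop}^= c$, and ${\leq_{dop}^=} \subseteq S$. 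So it remains to carry out steps (a)--(c) following the statement of Theorem~\ref{theorsaturation}.

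For step~(a): that $f$ witnesses ${\sqsubseteq_\G} \leq_B S$ is immediate from the results of \cite{Camerlo} recalled above (namely $G \sqsubseteq H \imp G^+ \leq_{dop}^= H^+$ and $G^+ \leq_{dop}^\geq H^+ \imp G \sqsubseteq H$) together with the inclusions ${\leq_{dop}^=} \subseteq S \subseteq {\leq_{dop}^\geq}$. It then remains to check that $f$ witnesses ${=_\G} \leq_B {\cong_{dop}}$; one direction is trivial since $G = H$ gives $G^+ = H^+$. For the converse, suppose $h$ is a color preserving order automorphism of $\QQ$ with $h \cdot G^+ = H^+$; recall that $h$ is the restriction to $\QQ$ of a continuous increasing bijection of $\RR$, and hence sends intervals to intervals. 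The key point is that inside a block $J_t$ with $|t| = l+1$ the set of colors occurring is $\{0\} \cup \{ m \in \omega \mid m \geq c\}$, where $c = \tau_G(\nu_t)+l$ if $t$ is good and $c = \alpha(l+2)+l$ if $t$ is bad; since $\alpha(l+1) \leq \tau_G(\nu_t) < \alpha(l+2)$ (as $\tau_G(\nu_t)$ is a code in $TY_{l+1}$), the intervals of possible values of $c$ for distinct levels are pairwise disjoint and correctly ordered, so from $c$ one reads off the level $l$, whether $t$ is good or bad, and, if good, the type code $\tau_G(\nu_t) = c - l$; moreover the number of $0$-colored points of $J_t$ recovers the same data. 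Combining this decoding with the order structure forces $h$ to respect the block decomposition and to induce, level by level, a matching of the good blocks preserving type codes, so that a back-and-forth argument along the lines of \cite{Camerlo} yields $G = H$ (and in fact the induced matching of blocks corresponds to an automorphism of $G$).

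The main obstacle is step~(c): showing that the map $\Sigma$ assigning to $G \in \G$ the $Aut(\QQ)$-stabilizer of $G^+$ is Borel. In contrast with the previous subsection, this stabilizer is in general nontrivial: inside each block $J_t$ only the finitely many points of $W_{c(t)}$ carry color $0$ (where $c(t) = \tau_G(\nu_t)+|t|-1$ for good $t$ and $c(t) = \alpha(|t|)+|t|-1$ for bad $t$) and so must be fixed, but between and outside these points a color preserving order automorphism of $J_t \cong \QQ$ can move points around, as each $P_n$ is dense. On the other hand, since every $G \in \G$ is rigid by Corollary~\ref{cornontrivial}, the analysis of step~(a) shows that every color preserving automorphism of $G^+$ fixes each block $J_t$ setwise; hence $\Sigma(G)$ consists exactly of the $h \in Aut(\QQ)$ that preserve each $J_t$ setwise, fix each point of $W_{c(t)}$, and send every point of a block lying in $P_n$ to a point of the same block lying in $P_n$. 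The only dependence on $G$ is through the finitely many thresholds $c(t)$, and these are Borel functions of $G$ (equivalently, $G^+$ is Borel-in-$G$). Thus, imitating the proof of Lemma~\ref{lemmaborelmap}, for every pair of finite injective tuples $(r_0, \dots, r_n)$, $(a_0, \dots, a_n)$ of rationals the set of $G \in \G$ admitting an element of $\Sigma(G)$ mapping $r_i$ to $a_i$ for all $i \leq n$ is cut out by finitely many Borel conditions on $G^+$ (the $r_i$ and $a_i$ lie in corresponding blocks, are color compatible, and respect the order), hence is Borel; so $\Sigma$ is Borel.

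With steps (a)--(c) established, Theorem~\ref{theorsaturation} yields that $(S, \cong_{dop})$ is invariantly universal for every analytic quasi-order $S$ on ${}^\QQ \omega$ with ${\leq_{dop}^=} \subseteq S \subseteq {\leq_{dop}^\geq}$; specializing to $S = {\leq_{dop}^=}$ and $S = {\leq_{dop}^\geq}$ gives the first assertion.
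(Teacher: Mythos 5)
Your steps (a) and (b) are essentially sound, and your decoding argument (reading off level, good/bad status, and $\tau_G(\nu_t)$ from the minimum nonzero color in a block) is a reasonable, somewhat more self-contained substitute for the paper's appeal to the structural facts i)--iii) from the cited work.

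The gap is in step (c). The claim that rigidity of $G$ forces every color preserving automorphism of $G^+$ to fix each block $J_t$ setwise is false. What rigidity actually yields --- and what the paper uses --- is only that the induced permutation $\rho$ of the block indices $^{<\omega}\QQ \setminus \{\emptyset\}$ satisfies $\nu_{\rho(t)} = \nu_t$ for every good $t$, so that $J_t$ and $J_{\rho(t)}$ carry \emph{identical} colorings; it does not give $\rho(t)=t$. In fact $\rho$ can be nontrivial: the partition $\{Q\}\cup\{Q_n \mid n\in\omega\}$ of $\QQ$ into dense pieces admits nontrivial order automorphisms $\sigma$ of $\QQ$ preserving every piece (back-and-forth), and applying such $\sigma$'s coordinate-wise produces a nontrivial $\rho$ that preserves length, goodness/badness, $\nu$, the tree structure, and $\leq_{lex}$; pasting in order-preserving, $W$-fixing, $P_n$-respecting bijections on each block then yields a color preserving automorphism of $G^+$ that genuinely moves blocks. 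Consequently $\Sigma(G)$ is strictly larger than the group you describe, and your proposed Borel condition (which forces $r_i$ and $a_i$ into \emph{the same} block) cuts out a proper subgroup of $\Sigma(G)$, so it does not establish Borelness of $\Sigma$. The paper's proof instead lists Borel conditions allowing $r_i\in J_t$, $a_i\in J_s$ with $t\neq s$, requiring $|t|=|s|$, the same good/bad status, $\nu_t=\nu_s$ when good, compatibility with the containment relation among the $t$'s, an analogous $\nu$-condition on proper initial segments when $t$ is bad, and detailed positional conditions within blocks. You would need to replace your stabilizer description and Borel conditions with something of that kind to close the gap.
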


\begin{proof}
 Identify $\QQ$ with $A$. In order to apply Theorem
 \ref{theorsaturation}, first notice that $\cong_{dop}$ is the
 isomorphism relation associated to both $\leq_{dop}^=$ and
 $\leq_{dop}^\geq$, and that it is induced by the group $Aut(\QQ)$ of
 automorphisms of $\QQ$ (note that any automorphism of $\QQ$ is
 automatically a dense order preserving function since it is
 surjective). Consider now the map $G \mapsto G^+$ previously
 defined. If $h \colon A \to A$ is a color and (dense) order
 preserving isomorphism of $G^+$ to $H^+$, the following hold (see
 the proof of \cite[Theorem 6]{Camerlo} for more details):
\begin{enumerate}[i)]
 \item $\forall t \in {}^{<\omega} \QQ \setminus \{ \emptyset \} \,
   \exists s \in {}^{< \omega} \QQ \setminus \{ \emptyset \} (h(J_t) =
   J_s)$;
\item defining $\rho(t) = s$ if $h(J_t) = J_s$, $\rho$ is an order and
  length preserving bijection;
\item $t$ is good if and only if $\rho(t)$ is good, and in this case
  $\tau_G(\nu_t) = \tau_H(\nu_{\rho(t)})$.
\end{enumerate}
Using these facts as at the end of the proof of \cite[Theorem
6]{Camerlo}, by a back and forth argument we have that $G^+
\cong_{dop} H^+$ implies $G \cong H$. Since for $G,H \in \G$ we have
$G \cong H  \iff G =
H \iff G^+ = H^+$ we get that $G \cong H \iff G^+
\cong_{dop} H^+$.

It remains to prove that the map $\Sigma$ associating to each $G \in \G$ the
$Aut(\QQ)$-stabilizer of $G^+$ is Borel. By an argument similar to
that of Lemma \ref{lemmasimple}, one can check that since $G$ has no
nontrivial automorphisms, then if $h$ is a color preserving
automorphism of $G^+$  and
$\rho$ is defined as in ii) above, $t = \rho(s)$ and $t$ good imply
$\nu_t = \nu_s$, which in turn implies that $(t,q)$ and $(s,q)$ have
same color for every $q \in \QQ$; the same trivially holds in the
case $t$ bad, as by iii) $t$ is good if and only if $\rho(t)$ is
good. Therefore one can check that given a map
$\fhi \colon B \to A$ (where $B$ is a finite subset of $A$), $G^+$
has a color preserving automorphism extending $\fhi$ if and only if the conjunction of
the following Borel conditions is satisfied:
\begin{itemize}
\item $\fhi$ is an order and coloring preserving injection;
\item for every $b \in B$, if $t,s \in {}^{< \omega} \QQ \setminus \{
  \emptyset \}$ are such that $b \in J_t$ and $\fhi(b) \in J_s$ then
\begin{itemize}
\item $|t|=|s|$,
\item $t$ is good if and only if $s$ is good, and if $t$ is good then
  $\nu_t = \nu_s$;
\end{itemize}
\item for every $b,b' \in B$ and $t,s \in {}^{< \omega} \QQ \setminus
  \{ \emptyset \}$, if $b,b' \in J_t$ then $\fhi(b) \in J_s \iff
  \fhi(b') \in J_s$;
\item for every $b\in B$, if $b\in J_t$ with $t$ bad, $ \fhi (b)\in J_s$ and $0<m<|t|$, setting $r=t\restriction m,u=s\restriction m$, one has
\begin{itemize}
\item $r$ is good $ \iff u$ is good,
\item if $r$ is good, then $\nu_r=\nu_u$;
\end{itemize}
\item for every $b,b'\in B$, if $b\in J_t,b'\in J_{t'}, \fhi (b)\in J_s, \fhi (b')\in J_{s'}$ then $t\subseteq t' \iff s\subseteq s'$;
\item for every $t \in {}^{< \omega} \QQ \setminus \{ \emptyset \}$,
  for every $1 \leq i < k_t$ (where $k_t$ is either $\tau_G(\nu_t)
  + l$ or $\alpha(l+1)+l$, depending on whether $t$ is good or bad),
  and for every $(t,q) \in B$, if $\fhi(t,q) = (s,q')$ then
\begin{enumerate}[a)]
\item $q < 1 \iff q' < 1$
\item $i < q < i+1 \iff i < q' < i+1$
\item $q > k_t \iff q' > k_t$
\item $q = i \iff q' = i$ and $q = k_t \iff q' = k_t$. \qedhere
\end{enumerate}
\end{itemize}
\end{proof}

We end this section with the proof that the quasi-order
$\leq_{dop}^{=_2}$ on the space ${}^\QQ 2$ of two-color
colorings of $\QQ$ is already invariantly universal. We first recall
the following construction from
\cite[Theorem 8]{Camerlo}. Let $\{ L_n \mid n \in \omega \}$ be a
collection of linear orders such that no one of them is
isomorphic to an interval of another, and for each $l \in \omega$ fix
scattered subsets
$W_l^{\alpha(l+1)}, W_l^{\alpha(l+1)+1}, \dotsc, W_l^{\alpha(l+2)-1}$
of $\QQ$ pairwise incomparable under embeddability.
For $t\in {}^{<\omega } \QQ \setminus\{\emptyset\} $, set
$J_t=L_{|t|-1}\times 2\times \QQ $ and let
$A= \{ (t,u,i,r) \mid t \in {}^{< \omega} \QQ \setminus \{ \emptyset
\}, (u,i,r) \in J_t\}$ be endowed with the lexicographic order, so
that $A$ is isomorphic to
$\QQ$. Define $t$ to be good or bad as done before. Given a countable
graph $G$, define $G'$ to be the coloring on $A$ defined by letting
$(t,u,i,r)$ to have color $i$ if and only if:
\begin{itemize}
\item[-] either $t$ is bad,
\item[-] or else $t$ is good and either $i = 0$ or $r \notin
  W_{|t|-1}^{\tau_G(\nu_t)}$.
\end{itemize}
Otherwise (that is if $t$ is good, $i=1$ and $r\in W_{|t|-1}^{\tau_G(\nu_t)}$) let
the color be $0$.
In \cite[Theorem 8]{Camerlo}  it is proved that the continuous map $G
\mapsto G'$ reduces embeddability between countable graphs to
$\leq^{=_2}_{dop}$.

\begin{theorem}\label{theoreq2}
The quasi-order $\leq_{dop}^{=_2}$ is invariantly
universal. Therefore, if $R$ is a quasi-order on $\omega$ with at
least two incomparable elements, then $(\leq^R_{dop},\cong_{dop})$ is
invariantly universal.
\end{theorem}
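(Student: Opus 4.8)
The plan is to apply Theorem~\ref{theorsaturation} to the map $f \colon \G \to {}^\QQ 2 \colon G \mapsto G'$ defined above, so I must verify its three hypotheses in the order (a), (b), (c) from the recipe following the main theorem. The ``morphism'' here is $\leq_{dop}^{=_2}$ and the associated $S$-isomorphism relation is $\cong_{dop}$ (color preserving automorphisms of $\QQ$), induced by the natural action of $Aut(\QQ)$ on ${}^\QQ 2$; so as usual $W = Z$, $Y = Aut(\QQ)$, $a$ is this action, and $g = id$. Since $Aut(\QQ)$ is a Polish group and the action is Borel (indeed continuous), the setup of Theorem~\ref{theorsaturation} is in place once $\Sigma$ is shown Borel.

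First I would check that $f$ reduces $=_\G$ (equivalently $\cong_\G$, by Corollary~\ref{mathbbg}) to $\cong_{dop}$. One direction is immediate: $G = H \imp G' = H'$. For the converse, suppose $h$ is a color preserving automorphism of $\QQ$ witnessing $G' \cong_{dop} H'$. As in the proofs of Theorems~\ref{theorQuniversal} and~\ref{theordopuniversal}, $h$ must respect the block structure: the intervals $J_t = L_{|t|-1} \times 2 \times \QQ$ are sent to intervals $J_s$, and because the linear orders $L_n$ are pairwise non-embeddable as intervals, $h \restriction J_t$ can only map $J_t$ onto some $J_s$ with $|t| = |s|$, inducing an order and length preserving bijection $\rho$ of ${}^{<\omega}\QQ \setminus \{\emptyset\}$. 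A color counting argument (the ``footprint'' of the color-$0$ points inside each block $J_t$ records, via the incomparable scattered sets $W_{|t|-1}^{\tau_G(\nu_t)}$, the value $\tau_G(\nu_t)$ when $t$ is good) then forces $t$ good $\iff \rho(t)$ good, and in the good case $\tau_G(\nu_t) = \tau_H(\nu_{\rho(t)})$. A back and forth argument exactly as at the end of \cite[Theorem 8]{Camerlo} (compare the proof of Theorem~\ref{theordopuniversal}) then produces an element of $S_\infty$ witnessing $G \cong H$, whence $G = H$ by Corollary~\ref{mathbbg}. Thus $f$ witnesses both ${\sqsubseteq_\G} \leq_B {\leq_{dop}^{=_2}}$ (this is \cite[Theorem 8]{Camerlo}) and ${=_\G} \leq_B {\cong_{dop}}$.

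The remaining and main point is step (c): showing that $\Sigma \colon \G \to G(Aut(\QQ))$, sending $G$ to the $Aut(\QQ)$-stabilizer of $G'$, is Borel. The key structural fact, proved by the same technique as Lemma~\ref{lemmasimple} (exploiting that each $G \in \G$ is rigid by Corollary~\ref{cornontrivial}), is that if $h$ is a color preserving automorphism of $G'$ with associated $\rho$, then $\rho(t) = s$ with $t$ good forces $\nu_t = \nu_s$, and hence $(t,u,i,r)$ and $(s,u,i,r)$ receive the same color for all $(u,i,r) \in J_t$; the analogous statement for $t$ bad is automatic since $t$ is good iff $\rho(t)$ is. From this one extracts a purely local, $G$-independent characterization: for a finite partial map $\fhi \colon B \to A$ (with $B$ a finite subset of $A$), $G'$ has a color preserving automorphism extending $\fhi$ if and only if $\fhi$ is an order and color preserving injection satisfying a finite list of Borel conditions matching lengths $|t| = |s|$, preserving the good/bad dichotomy together with the label $\nu_t = \nu_s$ in the good case, respecting the $\subseteq$ relation on the first coordinates, and the obvious compatibility with the $W_l^j$-footprints inside each block --- all conditions that do not mention $G$. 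Since these conditions are independent of $G$, the stabilizer of $G'$ is in fact the same closed subgroup $H^* \leq Aut(\QQ)$ for every $G \in \G$, so $\Sigma$ is constant and trivially Borel. Theorem~\ref{theorsaturation} then yields invariant universality of $\leq_{dop}^{=_2}$. The last sentence, about $(\leq_{dop}^R, \cong_{dop})$ for any quasi-order $R$ on $\omega$ with two incomparable elements, follows because such an $R$ restricts on a two-element subset to $=_2$, giving ${\leq_{dop}^{=_2}} \subseteq {\leq_{dop}^R} \cap ({\leq_{dop}^R})^{-1}$-style containments that let one transfer the result exactly as in Theorem~\ref{theordopuniversal}; I expect the delicate part to be the bookkeeping in the back-and-forth and the verification that the automorphism characterization is genuinely $G$-independent, both of which are modeled closely on \cite{Camerlo} and on the earlier proofs in this section.
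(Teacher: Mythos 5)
Your overall strategy is the paper's: apply Theorem~\ref{theorsaturation} to the map $G\mapsto G'$ from \cite[Theorem 8]{Camerlo}, check that it simultaneously reduces $\sqsubseteq_\G$ to $\leq^{=_2}_{dop}$ and $=_\G$ to $\cong_{dop}$ (the latter by a back-and-forth argument modeled on Theorem~\ref{theordopuniversal}), and then show that the stabilizer map $\Sigma$ is Borel by characterizing, via a finite list of Borel conditions, when a finite partial map $\fhi\colon B\to A$ extends to a color-preserving automorphism of $G'$. Up to this point you match the paper.

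However, your final step contains a genuine error. You claim that the extension conditions ``do not mention $G$'' and conclude that $\Sigma$ is a \emph{constant} map. This is false. The coloring $G'$ on a good block $J_t$ encodes $\tau_G(\nu_t)$ precisely through the position of the color-$0$ points inside $\{t\}\times\{u\}\times\{1\}\times\QQ$, namely the scattered set $W^{\tau_G(\nu_t)}_{|t|-1}$. A color-preserving automorphism $h$ of $G'$ must (by the rigidity argument) map that interval onto itself and set-wise fix $W^{\tau_G(\nu_t)}_{|t|-1}$; since the sets $W^j_l$ are pairwise non-embeddable, this is a genuinely $G$-dependent constraint, and for $G\neq H$ in $\G$ the stabilizers $\Sigma(G)$ and $\Sigma(H)$ in general differ. (Concretely, one can build an automorphism of $A$ acting nontrivially only on a single such interval, fixing $W^{\tau_G(\nu_t)}_{|t|-1}$ set-wise but not $W^{\tau_H(\nu_t)}_{|t|-1}$.) The paper's actual characterization reflects this: among its conditions is ``$\fhi_{t,u}$ can be extended to an automorphism of $\QQ$ which setwise fixes $W_t$, where $W_t=\emptyset$ if $t$ is bad and $W_t=W^{\tau_G(\nu_t)}_l$ if $t$ is good,'' which visibly depends on $G$. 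What you need to argue — and what the paper does argue — is that each such condition is \emph{Borel as a function of $(G,\fhi)$}, not that the conditions are $G$-independent; Theorem~\ref{theorsaturation} only requires $\Sigma$ Borel, not constant. (Contrast this with Theorem~\ref{theornonscattered}, where $\Sigma$ really is constant; the structure of the construction there is different. The paper's Remark following Theorem~\ref{theornonscattered} notes that Theorem~\ref{theoreq2}'s proof, like that one, uses the parameters $L_n,W^j_l$, but that remark concerns effectivity, not constancy of $\Sigma$.) You should also make explicit the paper's further structural facts about $h$ (that it induces well-defined maps on the $L_{|t|-1}$-coordinate and on the $2$-coordinate within each block), which are needed to show the extension conditions actually characterize extendibility to a full automorphism.
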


\begin{proof}
 The application of Theorem \ref{theorsaturation} will suffice again,
 so we have just to show that the hypotheses of that
 theorem are satisfied. Identify $A$ with $ \QQ $. Clearly the
 isomorphism relation associated to $\leq_{dop}^{=_2}$ is
  $\cong_{dop}$ again, so it is induced by the natural action of
  $Aut(\QQ)$. That the map $G \mapsto G'$ reduces $\cong_{ \G }$ to $\cong_{dop}$ essentially follows from
 the fact that properties similar to conditions
 i)--iii) given in the proof of Theorem \ref{theordopuniversal} hold
 also with respect to our new construction, so we
 need just to prove that the map $\Sigma$ sending each $G \in \G$ to the
 $Aut(\QQ)$-stabilizer of $G'$ is Borel. Arguing as before,
 since $G$ has no nontrivial automorphisms then if $h$ is a color preserving
 automorphism of $G'$ and $\rho$ is defined as in ii)
 above, $s= \rho(t)$ and $t$ good imply $\nu_t = \nu_s$, which in
 turn implies that $(t,u,i,r)$ and $(s,u,i,r)$ have
 same color for every $(u,i,r) \in L_{|t|-1} \times 2 \times \QQ$
 (the same is trivially true for bad $t$ and $s$
 again). Moreover, it is not hard to check that for every $(t,u,i,r),
 (t,u',i',r') \in A$ if $h(t,u,i,r) = (s,v,j,p)$ and
 $h(t,u',i',r') = (s,v',j',r')$ then $i=j$ (which in particular
 implies $i= i' \imp j = j'$) and $u = u' \imp v = v'$.
 Therefore we have that given $\fhi \colon B\to A$, where $B$ is a
 finite subset of $A$, $G'$ has a color preserving automorphism
 extending $\fhi$ if
 and only if the conjunction of the following Borel conditions is
 satisfied:
\begin{itemize}
 \item $\fhi$ is an order and coloring preserving injection;
 \item if $(t,u,i,r),(t',u',i',r') \in B$, $\fhi(t,u,i,r) = (s,v,j,p)$
   and $\fhi(t',u',i',r') = (s',u',i',p')$, then
\begin{itemize}
\item $|t|=|s|$,
\item $t$ is
good if and only if $s$ is,
\item if $t$ is good then $\nu_t = \nu_s$,
\item $i=j$,
\item $t = t' \imp s=s'$,
\item $t = t' \wedge u = u' \imp s=s'
\wedge v = v'$;
\end{itemize}
\item for every $(t,u,i,r)\in B$ with $t$ bad, if $ \fhi (t,u,i,r)=(s,v,i,p),0<m<|t|,t'=t\restriction m,s'=s\restriction m$, then
\begin{itemize}
\item $t'$ is good $ \iff s' $ is good,
\item if $t'$ is good then $\nu_{t'}=\nu_{s'}$;
\end{itemize}
\item if $(t,u,i,r),(t',u',i',r')\in B, \fhi (t,u,i,r)=(s,v,i,p), \fhi (t',u',i',r')=(s',v',i',p')$, then $t\subseteq t' \iff s\subseteq s'$;
\item for each $t \in {}^{l+1} \QQ$, $\fhi_t \colon {\rm dom} \fhi_t \subseteq L_l \to L_l$ can
  be extended to an automorphism of $L_l$, where
$\fhi_t$ is defined on $u$ with value $v$ if there is $(t,u,i,r) \in
B$ such that $\fhi(t,u,i,r)$ is of the form
$(s,v,i,p)$ for some $s,p$ ($\fhi_t$ is well-defined when the
previous conditions are satisfied);
\item for each  $t \in {}^{l+1} \QQ, u \in L_l$, the map
  $\fhi_{t,u} \colon \QQ \to \QQ$ can be extended to an
automorphism of $\QQ$ which setwise fixes $W_t$, where
$\fhi_{t,u}(r) = p$ if there is $(t,u,1,r) \in B$ such that
$\fhi(t,u,1,r) = (s,v,1,p)$ for some $s,v$, $W_t= \emptyset$ if $t$ is bad, and
$W_t= W^{\tau_G(\nu_t)}_l$ if $t$ is good. \qedhere
\end{itemize}

\end{proof}

\subsection{Continuous embeddings between compacta}\label{dendrites}

In this section, Theorem \ref{theorsaturation} will be applied to show
the invariant universality of the pair $(\sqsubseteq_c,\simeq )$,
where $\sqsubseteq_c$ is the relation of continuous embeddability
between dendrites (that is, compact, connected, locally connected
metric spaces not containing circles) and $\simeq $ is homeomorphism.
Dendrites form a standard Borel space, in fact a $ \boldsymbol
\Pi^0_3$-complete subset of the hyperspace $K({}^{\omega }[0,1])$, by
results in \cite{CDM2005}.
If a dendrite is the union of finitely many arcs, then it is called a tree.
The order $ord(p,X)$ of a point $p$ in a topological space $X$ is the
smallest cardinal $\kappa $ such that $p$ has an open neighborhood basis in
$X$ whose members have boundaries of cardinality at
most $\kappa $.

For a point $p$ in a dendrite $D$, the inequality $ord(p,D)\leq\omega $ holds.
If $ord(p,D)=1$ then $p$ is called an end point of $D$; if
$ord(p,D)\geq 3$, then it is called a branch point.
A maximal open free arc in $D$ is a subset of $D$ homeomorphic to
$(0,1)$ that does not contain branch points of $D$ and is maximal with
these properties.
What we shall actually show is the invariant universality restricted
to the class $Z$ of dendrites having infinitely many maximal open free
arcs, infinitely many branch points and whose points have order at
most $3$ (the fact that this is a Borel set can be recovered from the
proofs of \cite[Lemma 6.5]{CDM2005} and of \cite[Lemma 1.4]{MarRos}).
Again by \cite{CDM2005}, the set of pairs $(D,p)\in Z\times {}^{\omega
}[0,1]$ such that $p$ is a branch point of $D$ is a Borel set (with
countable vertical sections) and the same for the set of $(D,A)\in Z\times
K({}^{\omega }[0,1])$ such that $A$ is the closure of a maximal open
free arc in $D$.

As discussed after Theorem \ref{theorsaturation}, we need now: a map
$f$ from countable graphs to $Z$ and a map $g$ reducing $\simeq $ to a
suitable orbit equivalence relation.
Function $f$ has already been defined in \cite{Camerlo}, and we shall
recall here its definition; its range will actually
yield dendrites contained in the square ${}^2[0,1]$.
As for $g$, we shall
partially modify a construction of \cite{CDM2005}, since this will
make it easier to check the properties we need.

First we define the function $f$ and prove that it has the desired
properties. Fix trees $T_0,T_1,T_2\in Z$ with the property that
\begin{itemize}
\item[{\bf -}] the order of points in each $T_j$ is at most $3$;
\item[{\bf -}] each
$T_j$ has an end point $p_j$ such that for $i\neq j$ there is
no continuous embedding $ \fhi \colon T_i\to T_j$ with $ \fhi (p_i)=p_j$.
\end{itemize}
Fix also an enumeration $\langle  q_n \mid n\in\omega \rangle$ of $
\QQ \cap ( \frac 12 ,1)$ and an increasing bijection $h\colon  \QQ \cap (
\frac 12 ,1)\to \QQ
$.
For $(n,j)\in\omega\times 2$, let $g_{nj}\colon T_j\to [ \frac 12 ,1]\times
[0,1]$ be a continuous embedding whose range $T_n^j$ has diameter less
that $ \frac 1{n+1} $ and such that $g_{nj}(p_j)=(q_n,0)$ and
$T_n^j\cap T_{n'}^{j'}=\emptyset $ for $n\neq n'$.
Similarly, fix a continuous embedding $ \bar g \colon T_2\to [0, \frac 12
)\times [0,1]$ with $ \bar g (p_2)=( \frac 14 ,0)$ ant let $T$ be
its range.

Now, given a graph $G$ on $\omega $, let $G'$ be as in Theorem
\ref{theoreq2} and let
$$f(G)=([0,1]\times\{ 0\} )\cup T\cup\bigcup_{n\in\omega }T_n^{\gamma_n},$$
where $\gamma_n\in 2$ is the color assigned by $G'$ to $h(q_n)$. Notice
that $f(G)\in Z$.

The fact that $f$ reduces embeddability of graphs on $\omega $ to
$\sqsubseteq_c$ is contained in \cite{Camerlo}.
Suppose that $f(G)\simeq f(H)$, for $G,H\in \G $; then
$G'\cong_{dop}H'$ and $G=H$ by the previous section.

Now we define the function $g$. Let $\Lambda =\{ P,R\} $, where $P$ is
a unary relation symbol and $R$ is a ternary relation symbol.
In \cite{CDM2005} a function $\Phi $ is built reducing homeomorphism
on $Z$ (actually on a bigger class) to isomorphism in $Mod_{\Lambda
}$, the Polish space of $\Lambda $-structures on $\omega$: roughly
speaking, given $D \in Z$ the natural numbers (which constitute the
domain of $\Phi(D)$) are used as codes for the branching points and the
closure
of the maximal open free arcs of $D$, the interpretation in $\Phi(D)$
of the predicate $P$ identify the (codes for) branching points of $D$,
and the ternary relation $R$ is interpreted in $\Phi(D)$ as the
relation of ``being in between''.
We use here a similar reduction $g \colon Z\to Mod_{\Lambda }$ by taking
$g\restriction (Z\setminus f( \G ))=\Phi\restriction (Z\setminus f( \G
))$ and defining $g$ on $f( \G )$ in such a way to keep $g(D)$
isomorphic with $\Phi (D)$, so to grant that $g$ will still be a
reduction.

Let $\beta_n \colon f( \G )\to {}^2[0,1]$ be a sequence of Borel functions
such that $\beta_n(D)$ enumerate the branch points of $D$ having
positive second coordinate, so that the set of branch points of $D$ is
$\{ ( \frac 14,0)\}\cup (( \QQ \cap ( \frac 12 ,1))\times\{ 0\})\cup\{\beta_n(D)\mid n\in\omega\} $.
Similarly, let $\alpha_n \colon f( \G )\to K({}^2[0,1])$ be a sequence of
Borel functions such that $\alpha_n(D)$ enumerate the closures of the
maximal open free arcs of $D$ not lying on $[0,1]\times\{ 0\} $.
The collection of closures of maximal open free arcs of $D$ is thus
$\{ [0, \frac 14 ]\times\{ 0\} ,[ \frac 14 , \frac 12 ]\times\{ 0\}\}\cup\{\alpha_n(D)\mid n\in\omega\} $.
Now let $p_n \colon f( \G )\to K ({}^2[0,1])$ be defined by letting
$$p_n(D)= \left \{
\begin{array}{lcl}
\{ ( \frac 14 ,0)\} & \mbox{if} & n=0, \\
\{\beta_m(D)\} & \mbox{if} & n=4m+2, \\
\{ (q_m,0)\} & \mbox{if} & n=4m+4, \\
{}[0, \frac 14 ]\times\{ 0\} & \mbox{if} & n=1, \\
{}[ \frac 14 , \frac 12 ]\times\{ 0\} & \mbox{if} & n=3, \\
\alpha_m(D) & \mbox{if} & n=2m+5.
\end{array}
\right .
$$
So $\{ p_n(D)\mid n\in\omega\} $ is an enumeration of branch points and closures of maximal open free arcs of $D$.

To define function $g$ on $f( \G )$ consider $D\in f( \G )$.
For $n,m,m',m''\in\omega $, let
\begin{itemize}
\item $P^{g(D)}(n)$ if and only if $n$ is even,
\item $R^{g(D)}(m,m',m'')$ if and only if all points in $p_{m'}(D)$
  lie on any path from a point in $p_m(D)$ to a point in
  $p_{m''}(D)$.
\end{itemize}
As in the proof of \cite[Lemma 6.5]{CDM2005}, function $g$ is Borel
(since $f(\G)$ is Borel by the injectivity of $f$) and reduces
homeomorphism on $Z$ to isomorphism on
$Mod_{\Lambda }$.

\begin{theorem}
The pair $(\sqsubseteq_c,\simeq )$ on dendrite is invariantly
universal: for any analytic quasi-order $R$ there is a Borel class $
\mathcal C $ of dendrites closed under homeomorphism such that $R$ is
Borel bireducible with the restriction of $\sqsubseteq_c$ to $ \mathcal C
$.
\end{theorem}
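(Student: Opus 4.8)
The strategy is to apply Theorem~\ref{theorsaturation} to the pair $(S,E) = (\sqsubseteq_c, \simeq)$ on the standard Borel space $Z$ of dendrites with infinitely many maximal open free arcs, infinitely many branch points and all points of order at most $3$, using the map $f$ defined above (sending a graph $G$ to the dendrite $f(G)$), the orbit equivalence relation $E_a = {\cong}$ on $Mod_\Lambda$ induced by the logic action of $S_\infty$, and the Borel map $g \colon Z \to Mod_\Lambda$ just constructed. Since $g$ reduces $\simeq$ to $\cong$ and $f$ reduces embeddability of graphs to $\sqsubseteq_c$, steps (a) and (b) of the recipe following Theorem~\ref{theorsaturation} are already in place once we note that $f$ restricted to $\G$ reduces $=_\G$ (equivalently $\cong_\G$) to $\simeq$: this is exactly the remark above that $f(G) \simeq f(H)$ forces $G' \cong_{dop} H'$ and hence $G = H$ for $G,H \in \G$. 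It remains to carry out step (c): verify that the stabilizer map $\Sigma \colon \G \to G(S_\infty)$, sending $G$ to the stabilizer of $(g \circ f)(G)$ under the logic action, is Borel.

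The key point for step (c) is to identify, for $G \in \G$, the automorphism group of the $\Lambda$-structure $(g\circ f)(G)$ — equivalently (since $(g\circ f)(G) \cong \Phi(f(G))$ and $\Phi$ is a reduction of homeomorphism to isomorphism respecting the coding) the self-homeomorphism group of the dendrite $f(G)$, read off the enumeration $\langle p_n(D)\rangle$ of branch points and closures of maximal open free arcs. A self-homeomorphism of $f(G)$ must fix the distinguished arc $[0,1]\times\{0\}$ and its three special subarcs $[0,\tfrac14], [\tfrac14,\tfrac12], [\tfrac12,1]$ (distinguished by the attached copy of $T$ at $(\tfrac14,0)$ and by the accumulation of the pieces $T_n^{\gamma_n}$ on $(\tfrac12,1)$), hence it fixes each branch point $(q_n,0)$ and permutes the attached trees $T_n^{\gamma_n}$ only among those of the same color $\gamma_n$ — and by the choice of $T_0,T_1,T_2$ (pairwise non-embeddable respecting the marked end points) it in fact fixes each $T_n^{\gamma_n}$ setwise. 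So the only freedom is inside each $T_n^{\gamma_n}$ and inside the internal trees $\alpha_m(D)$; as in the proof of Theorem~\ref{theoreq2}, these local automorphism groups are governed by Borel data (automorphism groups of the fixed trees $T_j$ and of the $\QQ$-pieces, and the condition of setwise fixing the scattered sets $W_l^k$), and the combinatorial constraints translate into a Borel list of conditions on finite partial maps $\fhi \colon B \to \omega$ extending to an automorphism of $(g\circ f)(G)$, exactly as was done there. This shows $\Sigma$ is Borel.

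I would therefore organize the proof as: (1) recall that $(\sqsubseteq_c,\simeq)$, $f$, $g$ are as above and that $f\restriction\G$ reduces $=_\G$ to $\simeq$ while $g$ reduces $\simeq$ to $\cong$; (2) analyze a homeomorphism of $f(G)$ to show it fixes the backbone arc together with all branch points $(q_n,0)$ and $(\tfrac14,0)$, fixes each attached and each internal tree setwise, and hence that its possible behaviours are controlled by the (fixed, $G$-independent) automorphism groups of the building blocks $T_j$, $T$, and the $\QQ$-colorings; (3) using the coding $\langle p_n(D)\rangle$ and $P^{g(D)}, R^{g(D)}$, convert this into a Borel condition on finite partial injections characterizing extendibility to an automorphism of $(g\circ f)(G)$, concluding $\Sigma$ is Borel; (4) invoke Theorem~\ref{theorsaturation}. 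The main obstacle is step (2)–(3): one must be careful that the "rigidity" of the backbone genuinely pins down enough structure so that the residual automorphism freedom is independent of $G$ (it depends on $G$ only through which color $\gamma_n$ each $q_n$ carries, but the two color-classes of building trees $T_0, T_1$ are themselves rigid-in-the-relevant-sense by hypothesis), and that the "being in between" relation $R$ together with the order-$3$ constraint suffices to recover the attaching structure in a Borel way — this is where the careful setup of the special trees $T_0,T_1,T_2$ with non-embeddable marked end points, and the earlier analysis of $G'$ from Theorem~\ref{theoreq2}, does the essential work.
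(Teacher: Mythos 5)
Your overall framework is correct and matches the paper: apply Theorem~\ref{theorsaturation} with the maps $f$ (graphs to dendrites) and $g$ (dendrites to $\Lambda$-structures) defined just before the statement, noting that $f$ restricted to $\G$ reduces $=_\G$ to $\simeq$ and $g$ reduces $\simeq$ to $\cong$, and then verify that $\Sigma$ is Borel. However, your step~(2) analysis of the self-homeomorphisms of $f(G)$ contains a genuine error. You claim that since the three subarcs $[0,\tfrac14],[\tfrac14,\tfrac12],[\tfrac12,1]$ of the backbone are setwise fixed, ``hence it fixes each branch point $(q_n,0)$\ldots in fact fixes each $T_n^{\gamma_n}$ setwise.'' This does not follow and is false in general. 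A self-homeomorphism of $f(G)$ fixes the arc $[\tfrac12,1]\times\{0\}$ only as a set, and acts on it as some increasing self-homeomorphism; it can move $(q_n,0)$ to $(q_m,0)$ and hence send $T_n^{\gamma_n}$ to $T_m^{\gamma_m}$, as long as $\gamma_n=\gamma_m$ (so the attached copies are homeomorphic) and the induced permutation of the branch points extends to a color preserving automorphism of $G'$ on $\QQ$ via the identification $h$. And the $Aut(\QQ)$-stabilizer of $G'$ is typically nontrivial even for $G\in\G$: the whole content of the proof of Theorem~\ref{theoreq2} was to show that this stabilizer is a \emph{Borel function} of $G$, not that it is trivial; the conditions there (extending to an automorphism of $L_l$, extending to an automorphism of $\QQ$ setwise fixing $W_t$) are describing precisely the nontrivial residual freedom. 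Your statement that ``the only freedom is inside each $T_n^{\gamma_n}$ and inside the internal trees $\alpha_m(D)$'' misidentifies where the freedom actually lives.

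This mischaracterization is not merely cosmetic: it is exactly this permutation freedom among the backbone branch points that is the hard part of showing $\Sigma$ Borel, and the paper handles it by introducing the auxiliary maps $\kappa_l\colon f(\G)\to\{4m\mid m\in\omega\}$ which, purely from the abstract coding $g(D)$, recover which backbone branch point $p_{\kappa_l(D)}(D)$ each coded element $p_l(D)$ hangs above (using the $R^{g(D)}$ relation). Only with the $\kappa_l$ in hand can one write down the last, decisive Borel condition, namely that the induced partial permutation on backbone branch points extends to an automorphism of $g(D)$, and then this condition reduces to the Borel extendibility criterion proved in Theorem~\ref{theoreq2} for color preserving automorphisms of $(f^{-1}(D))'$. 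Your sketch never introduces anything like the $\kappa_l$, so it gives no way to translate the geometric constraints on a partial finite injection (on the abstract universe $\omega$ of $g(D)$) into conditions about the coloring $G'$; the appeal to ``exactly as was done there'' presupposes the bridge without building it. To repair the argument you would need to (i) retract the claim that the branch points $(q_n,0)$ are all fixed, (ii) recover from $g(D)$ the ``attachment'' function assigning to each element its backbone branch point, verify it is Borel, and (iii) use it to reduce the extendibility question for a finite partial automorphism of $g(D)$ to the already established Borel extendibility criterion for color preserving automorphisms of $G'$.
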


\begin{proof}
For $l\neq 1,3$ let $\kappa_l \colon f( \G )\to\{ 4m\mid m\in\omega\} $ be defined by setting $\kappa_l(D)=n$ if and only if $p_n(D)$ is the value that the first point map on $[0,1]\times\{ 0\} $ assumes on some (all) points of $p_l(D)$.
Notice the following facts:
\begin{enumerate}
\item Functions $\kappa_l$ are Borel:
  indeed, $\kappa_l(D)=n\Leftrightarrow\neg\exists r\in\{ 4m\mid
  m\in\omega\}\ (r\neq n\wedge R^{g(D)}(l,r,n))$.
\item Denoting for $D\in f( \G ),n\in\omega $ by $L_D(n)\in 2$
  the color assigned by $(f^{-1}(D))'$ to $h(q_n)$, the map $f( \G
  )\to {}^{ \omega }2 \colon D\mapsto L_D$ is Borel.
\end{enumerate}

By the observations preceding this theorem, in order to apply Theorem
\ref{theorsaturation} it remains only to show that the map $\Sigma' \colon f( \G )\to G(S_{\infty
})$ assigning to each $D\in f( \G )$ the
stabilizer $Stab(g(D))$ of $g(D)$ is Borel (so that the map $\Sigma$
assigning $Stab(g(f(G)))$ to each $G \in \G$, which is simply the
composition of $f$ and $\Sigma'$, is Borel as well).
So fix a finite injective sequence $s=(s_0,\ldots ,s_k)$ with $k\geq
3$ of natural numbers, in order to check that
 \[ V_s=\{ D\in f( \G )\mid\exists \fhi \in Stab(g(D))\ (s\subseteq \fhi) \} \]
is Borel.
Indeed, $D\in V_s$ is equivalent to the following
Borel conditions on $D$:
\begin{itemize}
\item $s_0=0,s_1=1,s_2=2$;
\item $P^{g(D)}(i)\Leftrightarrow P^{g(D)}(s(i))$, for $i\in\{ 0,\ldots ,k\} $;
\item $R^{g(D)}(i,i',i'')\Leftrightarrow R^{g(D)}(s_i,s_{i'},s_{i''})$, for $i,i',i''\in\{ 0,\ldots ,k\} $;
\item
  $\kappa_i(D)=\kappa_{i'}(D)\Leftrightarrow\kappa_{s_i}(D)=\kappa_{s_{i'}}(D)$, for $i,i' \in\{ 0,\ldots ,k\}\setminus\{ 1,3\} $;
\item for any $n$ such that $\kappa_{\bar{l}}(D) = n$ for some
  $\bar{l} \leq k$, the restriction of $s$ to $\{ l\leq
  k\mid\kappa_l(D)=n\} $ is extendible to an isomorphism $\psi $
  between the finite structures $ \mathcal A =\{
  l\in\omega\mid\kappa_l(D)=n\} $ and $ \mathcal B =\{
  l\in\omega\mid\kappa_l(D)=\kappa_{s_{ \bar l }}(D)\} $;
\item there is an automorphism of $g(D)$ extending the function
  sending $\kappa_l(D)$ to $\kappa_{s_l}(D)$ for all $l\in\{ 0,\ldots ,k\}\setminus\{ 1,3\} $.
\end{itemize}
This last set is Borel, since it is the set of all $D\in f( \G )$ such
that there is a color preserving automorphism of
$(f^{-1}(D))'$ extending the function that sends each $h(q_{
  \frac{\kappa_l(D)-4}4 })$ to
$h(q_{ \frac{\kappa_{s_l}(D)-4}4 })$ when $\kappa_l(D)\neq 0$; thus this follows from the proof
of Theorem \ref{theoreq2}.
\end{proof}

\subsection{Isometric embeddings between Polish metric spaces}\label{metric}

Now we consider the case of isometric embeddability on (some classes
of) Polish metric spaces. Recall that $(X,d)$ is a \emph{Polish metric space}
if $d$ is a complete metric on $X$ which generates a second countable
topology on $X$. It is a well-known fact that there is a universal
Polish space  $\mathbb{U}$ (called \emph{Urysohn space}) such that each
Polish metric space is isometric to a closed subspace of $\mathbb{U}$
and $\mathbb{U}$ is ultrahomogeneous
(such a $\mathbb{U}$ is unique up to isometry). Therefore it is
natural to consider the space $F(\mathbb{U})$ of the closed subspaces
of $\mathbb{U}$ (endowed with the usual Effros Borel structure, see \cite[\S 12.C]{Kechris1995}) as the
standard Borel space of Polish metric spaces. As shown in \cite{louros}, the relation of isometric embeddability $\sqsubseteq^i$ (that is the
quasi-order on $F(\mathbb{U})$ induced by isometric embeddings, i.e.\ distance preserving functions, between
Polish metric spaces)
is a complete analytic quasi-order (and $\cong^i$, the relation of
being isometric, is clearly the $\sqsubseteq^i$-isomorphism). To be
more precise, Louveau and Rosendal
gave two different proofs of the completeness of $\sqsubseteq^i$ by
considering two specific subclasses of $F(\mathbb{U})$: the class of
\emph{discrete} Polish metric spaces (i.e.\ spaces in which the induced
topology is discrete) and \emph{ultrametric} Polish spaces (i.e.\
spaces in which the metric $d$ is actually an ultrametric, that is $d$
satisfies the inequality $d(x,z) \leq \max \{ d(x,y),d(y,z) \}$ for all
points $x,y,z$ of the space).  It is natural to ask whether $\sqsubseteq^i$ is
invariantly universal, and even whether its restrictions to the subclasses
above are invariantly universal (such restricted versions easily imply
the general result, as the notions of discrete Polish metric space and
ultrametric Polish space are invariant under isometry).

We first consider the easier case of discrete Polish metric
spaces. Note that this class was also considered in
\cite{FriMot}, although with a different coding: in that paper the
class of discrete Polish metric spaces was defined as the collection
of the spaces of the form $(\omega,d)$ where $d$ induces a discrete
topology on $\omega$ (this can be done since any discrete Polish metric space must
be countable), while here we want to consider the class $\mathcal{D}
\subseteq F(\mathbb{U})$ of those infinite spaces $(X,d)$ such that $d$ induces a
discrete topology, which is potentially a much harder problem.
Let
  us note here that $\mathcal{D}$ is \emph{not} a standard Borel space
  as it is a proper co-analytic subset of $F(\mathbb{U})$ --- see
  \cite[Exercise 27.8]{Kechris1995}. In fact, we shall prove the result for a Borel $Z\subseteq \mathcal D $ invariant under isometry.

Given a combinatorial tree $G$ on $\omega$, consider the discrete Polish metric space $D_G$
whose domain is $\omega$ and whose distance $d_G$ is
the geodesic distance on $G$, that is $d_G(n,m) = $ the length of the (unique) path joining
$n$ to $m$ in $G$. Such a $D_G$ will be then
Borel-in-$G$ coded as an element of $\mathcal{D} \subseteq F( \mathbb U )$, also denoted by
$D_G$. As noted in \cite{louros}
and \cite{FriMot}, from such a $D_G$ one can recover the structure of
$G$ by linking two vertices $n,m$ if and only if $d_G(n,m) = 1$, so
that we clearly have $G \cong H \iff D_G \cong^i D_H$ and $G
\sqsubseteq H \iff D_G \sqsubseteq^i D_H$ for each pair of
combinatorial
trees $G,H$.

In order to apply Theorem \ref{theorsaturation}, we now define
a reduction of $\cong^i$ on $\mathcal{D}$ to isomorphism on a suitable
class of structures (the existence of such a reduction is a well-known
result, see e.g.\ \cite{gaokechris}, but here we need an explicit
definition). Recall from \cite[Theorem 12.13]{Kechris1995} that
there is a sequence $\psi_n \colon F(\mathbb{U}) \to \mathbb{U}$ of
Borel functions such that for every $F \in F(\mathbb{U})\setminus\{\emptyset\} $ the sequence
$\langle \psi_n(F) \mid n \in \omega \rangle$ is an enumeration
(which can be assumed without repetitions if $F$ is infinite) of a dense subset of $F$. If we consider the
restriction of those $\psi_n$ to $\mathcal{D}$, the sequence defined
above will actually be an enumeration of the full $F \in \mathcal{D}$, as each
point of $F$ is isolated.

Consider the infinite
language $\Lambda = \{ R_q\mid q\in \QQ^+\}$, where each $R_q$ is a
binary predicate.
To each $D = (X,d) \in
\mathcal{D}$ associate the $\Lambda $-structure $S(D)$ on $\omega$ putting
$R_q(i,j) \iff d(\psi_i(X),\psi_j(X)) < q$. It is clear that if $D =
(X,d),D' = (X',d') \in \mathcal{D}$ and $\fhi$ is an isometry between
$D$ and $D'$ then the map $f_\fhi \colon \omega \to \omega$ which maps
$i$ to the unique $j$ such that $\fhi(\psi_i(X)) = \psi_j(X')$ is an
isomorphism between $S(D)$ and $S(D')$. Conversely, if $\rho$ is an
isomorphism between $S(D)$ and $S(D')$ then the map $g_\rho \colon X
\to X' \colon \psi_n(X) \mapsto \psi_{\rho(n)}(X')$ is an isometry
between $D$ and $D'$.

\begin{theorem}\label{theordiscrete}
The relation of isometric embeddability between discrete Polish metric
spaces is invariantly universal, meaning that for every analytic
quasi-order $R$ there is a Borel class $\mathcal{C} \subseteq \mathcal{D}$
closed under isometry such that $R \sim_B {{\sqsubseteq^i}
  \restriction \mathcal{C}}$.

In particular, $\sqsubseteq^i$ between arbitrary Polish metric spaces
is invariantly universal.
\end{theorem}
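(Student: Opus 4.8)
The plan is to apply Theorem~\ref{theorsaturation}, following the three-step scheme laid out right after its proof. For step~(a), the reduction $f \colon \G \to \mathcal D$ will be $G \mapsto D_G$ (its range will in fact lie inside the Borel set $Z$ isolated below): by the facts recalled just before the statement one has $G \sqsubseteq H \iff D_G \sqsubseteq^i D_H$ and $G \cong H \iff D_G \cong^i D_H$ for combinatorial trees, and since $=$ and $\cong$ coincide on $\G$ by Corollary~\ref{mathbbg}, this single $f$ witnesses both ${\sqsubseteq_\G} \leq_B {\sqsubseteq^i}$ and ${=_\G} \leq_B {\cong^i}$. For step~(b), I take $Y = S_\infty$, $W = Mod_\Lambda$ with $\Lambda = \{ R_q \mid q \in \QQ^+ \}$, $a$ the logic action of $S_\infty$ on $Mod_\Lambda$ (whose orbit equivalence relation is isomorphism $\cong$), and $g$ the Borel map $D \mapsto S(D)$ defined above, which reduces $\cong^i$ to $\cong$.

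Because $\mathcal D$ is only co-analytic, before invoking Theorem~\ref{theorsaturation} I must produce a genuinely Borel, isometry-invariant $Z \subseteq \mathcal D$ still containing every $D_G$ with $G \in \G$. I would let $Z$ be the class of those $(X,d) \in F(\mathbb{U})$ whose distance function takes values only in $\omega$, equals the geodesic distance of its ``unit graph'' $\{ (x,y) \mid d(x,y)=1 \}$, and whose unit graph is connected and acyclic; equivalently, $Z$ is the class of metric spaces isometric to $D_G$ for some countable combinatorial tree $G$. Using the Borel functions $\psi_n \colon F(\mathbb{U}) \to \mathbb{U}$ of \cite[Theorem~12.13]{Kechris1995} --- which, restricted to $\mathcal D$, enumerate the \emph{whole} space, and without repetitions since members of $\mathcal D$ are infinite --- each of these requirements unfolds into a condition quantifying over natural numbers and finite tuples of them, so $Z$ is Borel; it is plainly closed under isometry; and it contains all $D_G$ with $G$ a combinatorial tree, each of which is indeed a discrete Polish metric space (a uniformly discrete metric being automatically complete). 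One then applies Theorem~\ref{theorsaturation} with $S = {\sqsubseteq^i} \restriction Z$ and $E = {\cong^i} \restriction Z$ (here $E \subseteq E_S$, as an isometry is an isometric embedding in both directions), together with $f$, $g$, $Y$, $W$, $a$ as above.

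It remains to carry out step~(c), namely to verify that $\Sigma \colon \G \to G(S_\infty)$, $G \mapsto Stab(S(D_G))$, is Borel; here the rigidity built into $\G$ does everything, and I claim $\Sigma$ is the constant map with value $\{ id \}$. Indeed, for $G = G_T \in \G$ an isometry of $D_G$ preserves, and is determined by, the relation ``$d_G(\cdot,\cdot)=1$'', so it is exactly a graph automorphism of $G$; hence $Isom(D_G) = Aut(G) = \{ id \}$ by Corollary~\ref{cornontrivial}. Consequently $S(D_G)$ is itself rigid: any $\rho \in S_\infty$ fixing $S(D_G)$ induces via $g_\rho$ a self-isometry of $D_G$, which must be the identity, whence $\psi_{\rho(n)}(D_G) = \psi_n(D_G)$ and so $\rho(n) = n$ for all $n$ (the enumeration of the infinite space $D_G$ having no repetitions). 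Thus $\Sigma$ is constant, a fortiori Borel, and Theorem~\ref{theorsaturation} gives that $({\sqsubseteq^i} \restriction Z, {\cong^i} \restriction Z)$ is invariantly universal. Finally, given an analytic quasi-order $R$, the Borel ${\cong^i} \restriction Z$-invariant set $\mathcal C \subseteq Z$ thereby produced, with $R \sim_B {\sqsubseteq^i} \restriction \mathcal C$, is Borel in $F(\mathbb{U})$ and --- because $Z$ is itself closed under isometry --- closed under isometry in all of $F(\mathbb{U})$; so the same $\mathcal C$ also witnesses invariant universality of $\sqsubseteq^i$ between arbitrary Polish metric spaces.

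The only step I expect to require real care is the verification that $Z$ is Borel --- that is, expressing ``the metric is the graph metric of a connected acyclic graph'' by conditions quantifying only over natural numbers and finite sequences, in a way independent of the chosen coding; all the other verifications are routine or already contained in the material recalled above.
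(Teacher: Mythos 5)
Your proposal is correct and follows essentially the same route as the paper's proof: same reduction $f\colon G \mapsto D_G$, same reduction $g\colon D \mapsto S(D)$ to isomorphism of $\Lambda$-structures with $\Lambda = \{R_q \mid q \in \QQ^+\}$, and the same rigidity argument showing that $\Sigma$ is constantly $\{\mathrm{id}\}$. The one place you genuinely diverge is the choice of the Borel, $\cong^i$-invariant set $Z \subseteq \mathcal D$ with $f(\G) \subseteq Z$, and it is precisely the step you flag as requiring real care. You take $Z$ to be the spaces isometric to $D_G$ for $G$ a combinatorial tree (integer distances, metric equal to the geodesic distance of the unit graph, unit graph connected and acyclic); this is indeed Borel by the arithmetic quantifier-count you sketch, but it is more work than necessary. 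The paper instead takes
\[
Z = \{\, F \in F(\mathbb U) \mid F \text{ infinite} \ \wedge\ \forall n,m \in \omega\ d(\psi_n(F),\psi_m(F)) \in \omega \,\},
\]
which is manifestly Borel and isometry-closed, clearly contains all $D_G$, and lies inside $\mathcal D$ for a one-line reason: if all distances are integers then $\{\psi_n(F) \mid n \in \omega\}$ has only eventually constant Cauchy sequences, hence is closed in $\mathbb U$, hence equals $F$, and for the same reason $F$ is discrete. In other words, you do not need $Z$ to consist exactly of tree-metric spaces; any Borel, isometry-invariant $Z$ with $f(\G) \subseteq Z \subseteq \mathcal D$ works, and the integer-distance condition alone suffices and makes the Borelness check trivial. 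Everything else in your argument --- that $f$ simultaneously reduces $\sqsubseteq_\G$ and $=_\G$, that $g$ reduces $\cong^i \restriction Z$ to $\cong$ on $Mod_\Lambda$, that automorphisms of $S(D_G)$ induce self-isometries of $D_G$ and then automorphisms of $G$, that $\Sigma$ is constant, and the passage from $Z$-invariance to $F(\mathbb U)$-invariance --- matches the paper.
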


\begin{proof}
Let $Z=\{ F\in F( \mathbb U )\mid F \mbox{ is infinite} \wedge\forall n,m\in\omega\ d(\psi_n(F),\psi_m(F))\in\omega\} $.
So $Z$ is Borel and closed under isometry.
Moreover, $Z\subseteq \mathcal D $:
indeed, for any $F\in Z$, notice that $F=\{\psi_n(F)\mid n\in\omega\} $ since $\{\psi_n(F)\mid n\in\omega\} $ is closed
in $ \mathbb U $, its only Cauchy sequences being the eventually constant ones; this same reason entails that $F$ is discrete.

It is
enough now to show that the hypotheses of Theorem \ref{theorsaturation}
are satisfied. As
already observed above, the map which sends a combinatorial tree $G$ to
the discrete Polish metric space $D_G\in Z$ simultaneously reduces
$\sqsubseteq$ to $\sqsubseteq^i$ and $\cong$ to $\cong^i$. Moreover
the map which sends $D\in Z$ to the structure
$S(D)$ is a reduction of $\cong^i$ to $\cong$, and since $\cong$ is
induced by the
logic action of $S_\infty$ it remains to show that the map $\Sigma
\colon \G
\to G(S_\infty)$ which associates to each $G \in \G$ the group
$H_G$ of the
automorphisms of $S(D_G)$ is Borel. But it is easy to check that any
nontrivial automorphism of $S(D_G)$ induces a nontrivial isometry of
$D_G$ into itself, which in turn induces a nontrivial automorphism of
$G$: thus, since by Corollary \ref{cornontrivial} any $G \in \G$ is
rigid, we get
that each $H_G$ consists exactly of the
identity function, therefore the  map $G \mapsto H_G$ is constant
(hence Borel).
\end{proof}

We now consider the slightly more complicated case of
ultrametric Polish spaces. In this case the collection $\mathcal{U}$
of all infinite ultrametric Polish spaces forms a
standard Borel subspace of $F(\mathbb{U})$. We shall again apply
Theorem \ref{theorsaturation}, so we need a Borel reduction $f \colon \G \to
\mathcal{U}$ of $\sqsubseteq$ into $\sqsubseteq^i$ and, simultaneously, of
$\cong$ into $\cong^i$ and a Borel function $g$ reducing $\cong^i$ on $\mathcal{U}$
to some orbit equivalence relation in a suitable way.

The function $f$ was (essentially) already defined in \cite{louros} and \cite{FriMot}:
for $G \in \G$, let $U_G$ be the ultrametric Polish space consisting
of all maximal paths of $G$ beginning in the root $\emptyset$ (such
paths will be called \emph{branches} of $G$),
together with the metric $d_G$ defined by $d_G(x,y) = 2^{-n}$, if
branches $x$ and $y$ are distinct and share $n+1$ vertices of $G$, and
$d_G(x,y) = 0$ if $x=y$. Each $U_G$ is easily seen to be an
ultrametric Polish
space, and is isometrically identified in a Borel
way with an element of $\mathcal{U}$, also denoted by $U_G$. Given any $G \in \G$, denote by $a_x$, $x \in {}^\omega \omega$, the
 unique maximal branch which contains all vertices of the form $x
 \restriction n$, by $b_{s, i}$ the branch determined by the points
 $(s^{++},i,0)$ (whenever such points exist in $G$), and by
 $c_{s,u,i}$, $u \in {}^{< \omega} 2, i = 0,1$, the branch determined
 by the points $(u,s,0^{2 \theta(u) +2} {}^\smallfrown i)$ (whenever
 these points exist in $G$). Let us call a \emph{fork} any set of
 branches of the form $F^G_s = \{ b_{s, i} \mid i \leq \# s + 2 \}$ or
  $F^G_{s,u} = \{ c_{s,u,i} \mid i = 0,1 \}$ (where $s \in
 {}^{< \omega} \omega, u \in {}^{< \omega}2$). Note that a point of
 $U_G$ is non-isolated if and only if is of the form $a_x$.

\begin{lemma} \label{lemmaisometry1}
Let $G,H \in \G$ and $U_G, U_H$ be defined as above. Then
 $U_G \cong^i U_H$ implies $G = H$.
\end{lemma}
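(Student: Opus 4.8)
The plan is to read off the normal tree $S$ (where $G = G_S$ and $H = G_T$) from the metric structure of $U_G$, exploiting that $\#$ and $\theta$ are bijections; once $S = T$ is established, $G = G_S = G_T = H$ follows at once (alternatively one reconstructs an abstract isomorphism $G_S \cong G_T$ and quotes Lemma~\ref{lemma1} together with Corollary~\ref{mathbbg}). Fix a surjective isometry $\Phi \colon U_G \to U_H$. Since $\Phi$ is a homeomorphism and, as remarked just before the statement, the non-isolated points of $U_G$ are exactly the branches $a_x$ ($x \in {}^\omega\omega$), $\Phi$ carries $\{a^G_x \mid x \in {}^\omega\omega\}$ bijectively onto $\{a^H_y \mid y \in {}^\omega\omega\}$. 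For distinct $x, x'$ one has $d_G(a^G_x, a^G_{x'}) = 2^{-2k}$, where $k$ is the length of the longest common initial segment of $x$ and $x'$ (the two branches share the $2k+1$ vertices $\emptyset, (x \restriction 1)^*, x \restriction 1, \dots, (x \restriction k)^*, x \restriction k$ and then split), and likewise in $H$; hence the bijection of ${}^\omega\omega$ induced by $\Phi$ preserves the length of the longest common initial segment, and is therefore induced by a unique level- and $\subseteq$-preserving bijection $\pi \colon \seqo \to \seqo$, i.e.\ $\Phi(a^G_x) = a^H_y$ where $y \restriction n = \pi(x \restriction n)$ for all $n$.

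Next I would isolate the combinatorial facts that make the forks and the $c$-gadgets metrically visible. The basic observation is that in any $G_T$ two distinct branches can split only at a vertex of the form $s \in \seqo$, or $s^{++}$, or $(u,s,0^{2\theta(u)+2})$ --- every other vertex has valence $\le 2$ on the side away from the root --- and the corresponding numbers of shared vertices are $2|s|+1$, $2|s|+3$ and $2|s|+2\theta(u)+4$. Consequently: (i) a distance $2^{-m}$ with $m$ odd occurs in $U_G$ only between $c^G_{s,u,0}$ and $c^G_{s,u,1}$ for some $(u,s) \in S$, and then $m = 2|s|+2\theta(u)+3$; (ii) writing $\delta(p)$ for the distance from an isolated branch $p$ to the set of non-isolated points, one has $\delta(b^G_{s,i}) = \delta(c^G_{s,u,i}) = 2^{-2|s|}$, realized in both cases exactly on the full cylinder $\{a^G_x \mid x \supseteq s\}$, and the fork $F^G_s$ is precisely a \emph{maximal} set of at least three isolated branches on which $\delta(\cdot)$ is constant and which are pairwise equidistant at a distance smaller than that constant value (for such a set, $\delta(\cdot) \equiv 2^{-2|s|}$, the common pairwise distance is $2^{-(2|s|+2)}$, and $|F^G_s| = \#s + 3$). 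Because these descriptions are isometry-invariant, $\Phi$ maps each fork $F^G_s$ onto a fork of $U_H$; since the members of $F^G_s$ lie at distance $2^{-2|s|}$ exactly on $\{a^G_x \mid x \supseteq s\}$, whose $\Phi$-image is $\{a^H_y \mid y \supseteq \pi(s)\}$ (as $\pi$ is a tree bijection), that fork must be $F^H_{\pi(s)}$, and moreover $\#s + 3 = |F^G_s| = |F^H_{\pi(s)}| = \#\pi(s) + 3$. In the same way $\Phi$ sends each pair $\{c^G_{s,u,0}, c^G_{s,u,1}\}$ (present exactly when $(u,s) \in S$) onto a pair $\{c^H_{s',u',0}, c^H_{s',u',1}\}$ with $(u',s') \in T$, and the invariants force $s' = \pi(s)$, $|s'| = |s|$ and $2|s'| + 2\theta(u') + 3 = 2|s| + 2\theta(u) + 3$, whence $\theta(u') = \theta(u)$ and so $u' = u$ since $\theta$ is a bijection.

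It then remains to put the pieces together. From $\#\pi(s) = \#s$ for every $s \in \seqo$ and the injectivity of $\#$ we get $\pi = \mathrm{id}$. Hence the preceding paragraph gives $(u,s) \in S \imp (u,s) \in T$, and applying the identical analysis to the isometry $\Phi^{-1}$ yields the reverse implication, so $S = T$ and $G = H$, as required. The only genuinely delicate point is the middle paragraph: verifying from the explicit combinatorial tree $G_T$ that branch splittings occur only at the three listed kinds of vertices, that $\delta$ together with the ``full cylinder'' condition singles out the forks (and the odd-exponent distances the $c$-gadgets), and that the distance-to-the-non-isolated-part invariant anchors both kinds of gadget above $\pi(s)$; once this bookkeeping is in place, the bijectivity of $\#$ (and of $\theta$) makes the whole configuration rigid and forces $S = T$.
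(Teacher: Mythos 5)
Your proof is correct and follows essentially the same route as the paper's: identify the non-isolated points, characterize the forks $F^G_s$ and the $c$-pairs by isometry-invariant data (distance to the non-isolated part, pairwise distances, cardinality), and then use the injectivity of $\#$ and $\theta$ to force the induced tree map to be the identity and read off $S=T$. The presentation differs only cosmetically --- you isolate the level-preserving tree bijection $\pi$ explicitly and verify the bookkeeping about where branches can split in a bit more detail than the paper does, but the invariants used and the logical skeleton are the same.
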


\begin{proof}
Note that  any isometry  $h$ between $U_G$ and $U_H$ must be a bijection of  $\{ a_x \mid x \in {}^\omega \omega \}$ onto itself. It is sufficient to show that

\begin{claim}
Given $s \in {}^{<\omega} \omega, u \in {}^{<\omega} 2$, $F^G_{s,u}$ is a fork of $U_G$ if and only if $F^H_{s,u}$
is a fork of $U_H$.
\end{claim}

To prove the claim we first show that $h(F^G_s) = F^H_s$. Note that all elements of $F^G_s$
are isolated, have distance $2^{-2 |s|}$ from $a_{s {}^\smallfrown \vec{0}}$, have distances
$2^{-(2|s|+2)}$ between them, and are the only points of $U_G$ with
these properties. Let $t = y \restriction |s|$ where $a_y = h(a_{s {}^\smallfrown \vec{0}})$: since $h$ is an isometry and the elements
of $F^H_t$ are exactly the isolated points in $U_H$ with distance
$2^{-2 |s|}$ from $a_y$ and distance $2^{-(2|s|+2)}$ between them, we
conclude that $h(F^G_s) = F^H_t$. In particular, $F^G_s$ and $F^H_t$ have the same cardinality, hence $s=t$.

This also implies that $h(a_x) = a_x$ for every $x \in {}^\omega \omega$.
Now note that the elements of
$F^G_{s,u}$ are exactly the two points of $U_G$ which have
distance $2^{-2 |s|}$ from $a_{s {}^\smallfrown \vec{0}}$, and have distance
$2^{-(2|s|+2\theta(u)+3)}$ between them. But then $h$ maps them to two points of $U_H$ which have distance $2^{-2 |s|}$ from
$a_{s {}^\smallfrown \vec{0}}$, and have distance $2^{-(2|s|+2\theta(u)+3)}$ between them. Such points can only be the elements of $F^H_{s,u}$, which is therefore a fork of $U_H$.
\end{proof}

\begin{lemma} \label{lemmaisometry2}
Let $G,H \in \G$. Then
 $G \sqsubseteq H \iff U_G \sqsubseteq^i U_H$.
\end{lemma}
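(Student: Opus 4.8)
The plan is to prove the two implications separately. The implication $G \sqsubseteq H \imp U_G \sqsubseteq^i U_H$ will be a direct construction, turning a Lipschitz witness of $\leq_{max}$ into an isometric embedding, whereas the converse --- extracting a Lipschitz witness of $\leq_{max}$ from an arbitrary isometric embedding --- requires a careful analysis of the metric structure of the $U_G$'s and is where I expect the real difficulty to lie. For the easy direction, write $G = G_S$ and $H = G_T$ with $S,T$ normal trees on $2 \times \omega$. By the reproof of \cite[Theorem 3.1]{louros} recalled before Lemma \ref{lemma1}, $G_S \sqsubseteq G_T$ yields $S \leq_{max} T$, and in fact this is witnessed by an \emph{injective} Lipschitz $f \colon \seqo \to \seqo$ with $\# s \leq \# f(s)$ for all $s$. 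From such an $f$ I would define $\iota \colon U_G \to U_H$ on branches by
\[ \iota(a_x) = a_{x'} \text{ with } x' = \bigcup_{n} f(x \restriction n), \qquad \iota(b_{s,i}) = b_{f(s),i}, \qquad \iota(c_{s,u,i}) = c_{f(s),u,i} ; \]
this is well defined because $f$ is Lipschitz, because $i \leq \# s+2 \leq \# f(s)+2$, and because $(u,s) \in S \imp (u,f(s)) \in T$ (indeed $\iota$ is just the restriction to the space of branches of the explicit combinatorial embedding $G_S \to G_T$ built from $f$ in that same reproof). It is injective since $f$ is; and to check that it is isometric I would take distinct branches $\beta_1,\beta_2$ of $G$ with last common vertex $v$ --- so $d_G(\beta_1,\beta_2) = 2^{-n}$ where $n+1$ is the number of vertices on the $\emptyset$-to-$v$ path --- and verify, inspecting the few possible patterns and using $|f(s)| = |s|$, that $\iota(\beta_1)$ and $\iota(\beta_2)$ diverge immediately after a vertex whose $\emptyset$-to-it path again has $n+1$ vertices; since $G_T$ is acyclic, two branches diverging at a vertex share exactly the path up to that vertex, so $d_H(\iota(\beta_1),\iota(\beta_2)) = 2^{-n}$.

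For the converse, let $\iota \colon U_G \to U_H$ be an isometric embedding, $G = G_S$, $H = G_T$. Since $\iota$ is injective and continuous it sends non-isolated points to non-isolated points; as the non-isolated points of $U_G$ and $U_H$ are exactly those of the form $a_x$ (noted before Lemma \ref{lemmaisometry1}), there is $\sigma \colon {}^\omega \omega \to {}^\omega \omega$ with $\iota(a_x) = a_{\sigma(x)}$. Computing distances between branches of the form $a_\cdot$ one sees that $x$ and $y$ agreeing on exactly their first $k$ coordinates forces $d_G(a_x,a_y) = 2^{-2k}$; applying $\iota$, the same must hold of $\sigma(x)$ and $\sigma(y)$, so $\sigma$ preserves common-prefix length and therefore induces a well-defined Lipschitz map $f \colon \seqo \to \seqo$ via $f(s) = \sigma(x) \restriction |s|$ for any $x \supseteq s$. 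What is then left to prove is that $(u,s) \in S \imp (u,f(s)) \in T$, i.e.\ that $f$ witnesses $S \leq_{max} T$, from which $G = G_S \sqsubseteq G_T = H$ follows.

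This last step is the one I expect to be the main obstacle, and I would handle it as in the analysis behind Lemma \ref{lemmaisometry1}, via a classification of the possible divergence vertices in $G_T$: the only vertices of $G_T$ of valence $\geq 3$ are those of the form $w \in \seqo$ (valence $\omega$), $w^{++}$ (valence $\# w + 4$) and $(v,w,0^{2\theta(v)+2})$ (valence $3$), and the numbers of vertices on the $\emptyset$-to-it path for these three kinds of vertices are $2|w|+1$, $2|w|+3$ and $2|w| + 2\theta(v) + 4$ respectively --- the first two even, the last odd. Now fix $(u,s) \in S$ and set $p_i = \iota(c_{s,u,i})$ for $i = 0,1$; in $U_G$ the points $c_{s,u,0},c_{s,u,1}$ have distance $2^{-(2|s|+2\theta(u)+3)}$ from each other and distance $2^{-2|s|}$ from $a_{s {}^\smallfrown \vec 0}$, so in $U_H$ the points $p_0,p_1$ have distance $2^{-(2|s|+2\theta(u)+3)}$ from each other and distance $2^{-2|s|}$ from $a_{\sigma(s {}^\smallfrown \vec 0)}$. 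Since the exponent $2|s|+2\theta(u)+3$ is odd, the classification forces $p_0$ and $p_1$ to diverge at a vertex of the form $(v',s',0^{2\theta(v')+2})$ with $(v',s') \in T$; the only two branches through such a vertex are the elements of the fork $F^H_{s',v'}$, so $\{p_0,p_1\} = F^H_{s',v'}$. Comparing with the distance $2^{-2|s|}$ from $a_{\sigma(s {}^\smallfrown \vec 0)}$ (each element of $F^H_{s',v'}$ leaves the $\seqo$-skeleton of $G_T$ exactly at $s'$), and using that $\theta$ is increasing in length, then forces $|s'| = |s|$ and $s' \subseteq \sigma(s {}^\smallfrown \vec 0)$, i.e.\ $s' = f(s)$; and then $2|s'| + 2\theta(v') + 3 = 2|s| + 2\theta(u) + 3$ together with $|s'| = |s|$ yields $\theta(v') = \theta(u)$, hence $v' = u$ by injectivity of $\theta$. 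Thus $F^H_{u,f(s)}$ is a fork of $U_H$, which means precisely $(u,f(s)) \in T$, completing the proof that $f$ witnesses $S \leq_{max} T$.
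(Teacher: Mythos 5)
Your proof is correct and follows essentially the same approach as the paper's: for the easy direction, convert a combinatorial embedding (via the induced Lipschitz map) into an isometric embedding of branches; for the hard direction, observe that the isometry preserves non-isolated points (branches of the form $a_x$), read off an induced Lipschitz $\fhi$ from the action on those, and then show fork-preservation by identifying forks metrically (two points at the prescribed distances from $a_{s^\smallfrown\vec 0}$ and from each other), concluding that $\fhi$ witnesses $S\leq_{max}T$. You supply considerably more detail than the paper in the fork-preservation step, working through the parity/valence classification of divergence vertices explicitly, whereas the paper compresses this into a reference to the previous lemma. One small slip: you state that the path lengths $2|w|+1$, $2|w|+3$, $2|w|+2\theta(v)+4$ are ``the first two even, the last odd,'' when in fact the first two are odd and the last is even; the conclusion you draw is nonetheless the correct one (the exponent $2|s|+2\theta(u)+3$ being odd means the shared-vertex count $2|s|+2\theta(u)+4$ is even, matching the third kind), so this is purely a typo. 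Also, in ruling out $|s'|>|s|$ you invoke ``$\theta$ increasing in length'' somewhat elliptically; the missing observation is that $|v'|=|s'|$ and $|u|=|s|$ (since $S,T$ are trees on $2\times\omega$), so $|s'|>|s|$ would force both $\theta(v')>\theta(u)$ and $\theta(v')<\theta(u)$. Neither point affects the validity of the argument.
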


\begin{proof}
We just show $U_G \sqsubseteq^i U_H \imp G \sqsubseteq H$
because any embedding between  $G$ and $H$ can be easily converted
into an isometric embedding between $U_G$ and $U_H$.

Fix an isometric embedding $h$ from $U_G$ to $U_H$. Note that
 $h$ must send
 elements of the form $a_x$ into elements of
 the same form. The function $h$ induces the Lipschitz map $\fhi \colon {}^{< \omega} \omega \to {}^{< \omega} \omega$ defined by $\fhi(s) = y \restriction |s|$ where $a_y = h(a_{s {}^\smallfrown \vec{0}})$.

We first show that if $s \in {}^{<\omega} \omega, u \in {}^{<\omega} 2$ are
such that $F^G_{s,u}$ is a fork of $U_G$, then $F^H_{\fhi(s),u}$
is a fork of $U_H$.
In fact, as already noticed in the proof of the previous lemma, the elements of
$F^G_{s,u}$ are exactly the two points of $U_G$ which  have
distance $2^{-2 |s|}$ from $a_{s{}^\smallfrown \vec{0}}$, and have distance
$2^{-(2|s|+2\theta(u)+3)}$ between them. But then there must be two
points in $U_H$ which have distance $2^{-2 |s|}$ from
$a_y$ (where $y$ is as in the definition of $\fhi$), and have distance $2^{-(2|s|+2\theta(u)+3)}$ between them: this can
happen only if $F^H_{\fhi(s),u}$ is a fork of $U_H$.

Now let $S$ and $T$ be normal trees such that $G = G_S$ and $H = G_T$. By the above, $\fhi$ is a witness of $S \leq_{max} T$, and hence $G \sqsubseteq H$.
\end{proof}

Regarding the map $g$ that will be used in Theorem
\ref{theorultrametric}, it was already introduced in
\cite{gaokechris}, although here we need to perform a slight
modification to the original construction in order to simplify
subsequent computations. Let $\psi_n \colon F(\mathbb{U}) \to
\mathbb{U}$ be Borel functions defined as above.
The basic open balls of a space $U \in \mathcal{U}$ are
of the form $B(\psi_n(U),q) = \{ x \in U \mid d_U(x, \psi_n(U)) < q
\}$ for some $n \in \omega$ and $q \in
\QQ^+$. Let $\Lambda $ be a first order language consisting of a binary
relation symbol $R$ and of countably many unary relation symbols
$Q_r$, for $r\in \QQ^+$. For
each $U \in \mathcal{U}$, consider the $\Lambda $-structure $S(U)$ whose
domain is $\{ (n, q) \mid n\in \omega ,q\in \QQ^+ \}$, in which $R$
holds between $(n,q)$ and $(n',q')$ if and
only if $B(\psi_n(U),q) \subseteq B(\psi_{n'}(U),q')$, and in
which $Q_r$ holds for $(n,q)$ if and only if ${\rm
  diam}(B(\psi_n(U),q)) <r$. As it is shown in \cite{gaokechris},
the map sending $U \in \mathcal{U}$ to $S(U)$ (coded as a
$\Lambda $-structure with domain $\omega$) is Borel and reduces
$\cong^i$ to $\cong$.

Notice now that the above construction does not really depend on the
choice of the functions $\psi_n$: had we chosen a different sequence
of functions $\psi'_n$ (with the same properties), the structure
constructed from $U \in \mathcal{U}$ as explained above but using the
$\psi'_n$ instead of the $\psi_n$ would be formally different but
still isomorphic to $S(U)$. So define the $\psi'_n$ as follows: first set $\psi'_n \restriction (F(\mathbb{U}) \setminus f(\G))= \psi_n
\restriction (F(\mathbb{U}) \setminus f(\G))$.
Then, for $G \in \G$ let
\begin{itemize}
\item[{\bf -}] $\psi'_{3 \# s}(U_G) = a_{s {}^\smallfrown \vec{0}}$,
\item[{\bf -}] $\langle \psi'_{3n +
  1}(U_G) \mid n \in \omega \rangle$ be the enumeration of the branches of $G$ of the form $b_{s ,i}$ according to the position of $(s,i)$ in some fixed ordering of ${ }^{<\omega }\omega\times\omega $ in type $\omega $,
\item[{\bf -}] $\langle \psi'_{3n+2}(U_G)\mid n \in
\omega \rangle$ be the enumeration of the branches of $G$ of the form
$c_{s,u,i}$ according to the position of $(s,u,i)$ in some fixed ordering of ${ }^{<\omega }\omega\times{ }^{<\omega }2\times 2$ in type $\omega $.
\end{itemize}
Since $f(\G)$ is a Borel subset
of $\mathcal{U}$ (as $\G$ is Borel and $f$ is injective because it reduces
equality on $\G$ to isometry), the
functions $\psi'_n$ are still Borel and their values on $U \in \mathcal{U}$
still enumerate a dense subset of $U$. Still call $S(U)$ the $\Lambda $-structure
constructed from $U \in \mathcal{U}$ as explained above but using these
specific $\psi'_n$ in the construction (each $S(U)$ will be confused
with its Borel-in-$U$ coded version as a $\Lambda $-structure on
$\omega$), and let $g$ be the Borel map sending $U$ to $S(U)$.

\begin{theorem}\label{theorultrametric}
The relation of isometric embeddability between ultrametric Polish
spaces is invariantly universal, that is for every analytic
quasi-order $R$ there is a Borel class $\mathcal{C} \subseteq \mathcal{U}$
closed under isometry such that $R \sim_B {{\sqsubseteq^i}
  \restriction \mathcal{C}}$.
\end{theorem}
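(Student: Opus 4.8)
The plan is to apply Theorem~\ref{theorsaturation} with $Z=\mathcal{U}$, $S={\sqsubseteq^i}\restriction\mathcal{U}$, $E={\cong^i}\restriction\mathcal{U}$, the map $f\colon\G\to\mathcal{U}$ sending $G$ to $U_G$, the Polish group $Y=S_\infty$ acting on $W=Mod_\Lambda$ by the logic action $j_\Lambda$, and the map $g\colon\mathcal{U}\to Mod_\Lambda$ sending $U$ to $S(U)$. Of the three steps listed after Theorem~\ref{theorsaturation}, (a) and (b) are already in place: Lemma~\ref{lemmaisometry2} gives ${\sqsubseteq_\G}\leq_B{\sqsubseteq^i}$, Lemma~\ref{lemmaisometry1} gives ${=_\G}\leq_B{\cong^i}$ (so in particular $f$ is injective on $\G$), and the construction of $S(\cdot)$ recalled above, after \cite{gaokechris}, shows that $g$ reduces $\cong^i$ to isomorphism, i.e.\ to the orbit equivalence relation $E_{j_\Lambda}$. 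So the whole proof reduces to step (c): proving that the map $\Sigma\colon\G\to G(S_\infty)$ sending $G$ to the $j_\Lambda$-stabilizer of $S(U_G)$ --- equivalently, to $\mathrm{Aut}(S(U_G))$ --- is Borel.

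First I would obtain a concrete description of $\mathrm{Aut}(S(U_G))$. Since $U_G$ is a complete ultrametric space whose countable dense subset is enumerated by the $\psi'_n(U_G)$, the proof that $S(\cdot)$ reduces $\cong^i$ to $\cong$ in fact exhibits every automorphism of $S(U_G)$ as a self-isometry of $U_G$ followed by a permutation of codes naming the same ball of $U_G$ (and the partition of codes into such classes, as well as these code permutations, clearly vary Borel in $G$). It remains to describe $\mathrm{Isom}(U_G)$. Running the argument of Lemma~\ref{lemmaisometry1} in the case $G=H$: each fork $F^G_s$ consists exactly of the isolated points $p$ of $U_G$ whose least positive distance to another point of $U_G$ equals $2^{-(2|s|+2)}$ and with $d(p,a_{s {}^\smallfrown \vec{0}})=2^{-2|s|}$, and each fork $F^G_{s,u}$ consists of the isolated points $p$ with least positive distance $2^{-(2|s|+2\theta(u)+3)}$ and $d(p,a_{s {}^\smallfrown \vec{0}})=2^{-2|s|}$. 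Hence any self-isometry $h$ of $U_G$ maps $F^G_s$ onto a fork of $U_G$, which --- since distinct forks $F^G_s$ have distinct finite cardinalities $\#s+3$ --- must be $F^G_s$ itself; consequently $h(a_{s {}^\smallfrown \vec{0}})=a_{s {}^\smallfrown \vec{0}}$ for all $s$, whence $h(a_x)=a_x$ for all $x$ (approximating $a_x$ by the $a_{(x\restriction m){}^\smallfrown \vec{0}}$), and then each $F^G_{s,u}$ is fixed setwise too. Conversely, since distances in $U_G$ depend only on which vertices of $G$ two branches share, any permutation of $U_G$ fixing every $a_x$ and permuting each fork within itself is an isometry. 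Thus $\mathrm{Isom}(U_G)$ is the product of the finite symmetric groups $\mathrm{Sym}(F^G_s)$, $s\in\seqo$, and $\mathrm{Sym}(F^G_{s,u})$ over those pairs $(s,u)$ for which $F^G_{s,u}$ occurs in $U_G$; combining this with the code permutations above yields $\mathrm{Aut}(S(U_G))$.

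With this description at hand, Borelness of $\Sigma$ follows by the route already used for Lemma~\ref{lemmaborelmap} and in the proof of Theorem~\ref{theoreq2}: by the standard characterization of Borel maps into $G(S_\infty)$ it suffices to show that for every finite injective sequence $(a_0,\dots,a_k)$ of natural numbers the set $\{\,G\in\G\mid S(U_G)\text{ has an automorphism extending }(a_0,\dots,a_k)\,\}$ is Borel. Here the specific enumeration $\psi'_n$ pays off: the residue of $n$ modulo $3$ already records whether $\psi'_n(U_G)$ is an $a$-, $b$-, or $c$-branch, and together with the fixed orderings it records which branch it is. Using this, one rewrites ``$S(U_G)$ has an automorphism extending $(a_0,\dots,a_k)$'' as a conjunction of conditions on $G$: $(a_0,\dots,a_k)$ is a partial $\Lambda$-isomorphism of $S(U_G)$; all of the finitely many forks $F^G_s$, $F^G_{s,u}$ that are touched by $(a_0,\dots,a_k)$ actually occur in $U_G$ --- a Borel condition, since for $G=G_T$ the fork $F^G_{s,u}$ occurs precisely when the vertex $(u,s,\emptyset)$ is in $G$, i.e.\ precisely when $(u,s)\in T$, which is recoverable Borel-in-$G$; and $(a_0,\dots,a_k)$ is compatible with the decomposition of $\mathrm{Aut}(S(U_G))$ found above, i.e.\ it respects the fork partition, maps fork-blocks to fork-blocks of the same cardinality with the same attaching data, fixes (the codes for) the non-isolated points locally, and permutes redundant codes among those naming the same ball. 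Each conjunct is Borel in $G$, so $\Sigma$ is Borel; Theorem~\ref{theorsaturation} then yields that $({\sqsubseteq^i}\restriction\mathcal{U},{\cong^i}\restriction\mathcal{U})$ is invariantly universal, and since being an ultrametric Polish space is invariant under isometry, this is exactly the assertion of the theorem.

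The step I expect to be the main obstacle is the second one: securing the precise description of $\mathrm{Aut}(S(U_G))$ --- that is, checking that automorphisms of the ball structure $S(U_G)$ arise from self-isometries of $U_G$ up to the code permutations, and that the only self-isometries of $U_G$ are the fork permutations (the latter being essentially the content of the argument in Lemma~\ref{lemmaisometry1}, reused with $G=H$ and tightened to self-isometries). Once this structural picture is in place, the computation in the third step is a routine piece of bookkeeping, entirely parallel to those carried out for $\preceq_{epi}$ and for $\leq^{=_2}_{dop}$.
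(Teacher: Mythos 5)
Your proposal matches the paper's proof in all essential respects: same choice of $f$, $g$, and reenumeration $\psi'_n$, same appeal to Lemmas~\ref{lemmaisometry1} and \ref{lemmaisometry2}, and the same structural analysis showing that any self-isometry of $U_G$ fixes each accumulation point $a_x$ and fixes each fork setwise (via the fork cardinalities $\#s+3$ and the distance characterizations), which the paper also obtains by rerunning the argument of Lemma~\ref{lemmaisometry1}. The only place you are looser than the paper is the final bookkeeping step: the paper spells out three explicit Borel conditions (partial automorphism; if $(n,q)$ names a ball of positive diameter then $h(n,q)$ names the \emph{same} ball; if $(n,q)$ isolates a point then $h(n,q)$ isolates a point in the \emph{same} fork, recorded via a Borel relation $D_G$) and then gives a genuine construction of an extending automorphism, whereas you summarize this as routine bookkeeping; in particular your phrase ``maps fork-blocks to fork-blocks of the same cardinality with the same attaching data'' should be sharpened to ``maps codes for isolated points to codes for isolated points \emph{in the same fork},'' which is what your structural claim (forks fixed setwise) actually delivers and what the equivalence with extendibility requires.
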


\begin{proof}
To show that the hypotheses of Theorem \ref{theorsaturation} are
satisfied, consider the maps $f$ and $g$ defined above: by Lemmas \ref{lemmaisometry1} and \ref{lemmaisometry2} and the previous
observations, we just need to check that the map $\Sigma$ sending $G \in \G$
into the group of automorphisms of $S(U_G)$ is
Borel.

First note that, by the proof of Lemma \ref{lemmaisometry1},  an isometry of $U_G$ into itself must
be the identity on branches of the form $a_x$ (i.e.\ on its accumulation points) and set-wise fix the
forks of $U_G$. Then notice that the
function taking $G$ into the binary relation $D_G$ on $\omega$ defined
by
$$\begin{array}{rcl}
n \, D_G\,  m & \iff & \psi'_n(U_G) \mbox{ and } \psi'_m(U_G) \mbox{ are both
isolated} \\
 & & \mbox{and belong to the same fork of } G
\end{array}
$$
is Borel. Finally, recall
from \cite{gaokechris} that any automorphism $H$ of $S(U_G)$ induces
the isometry $H' \colon U_G \to U_G$ defined by $H'(x) = y \iff \{y\}
= \bigcap_{k \in \omega} B(\psi_{m_k}(U_G),q_k)$, where the $m_k$ and the $q_k$ are such that there are sequences $\langle n_k \mid k
\in \omega \rangle, \langle r_k \mid k \in \omega \rangle$ of natural
numbers and elements of $\QQ^+$, respectively, with the properties
that $\psi_{n_k}(U_G) \to x$, $r_k \to 0$, and $H(n_k,r_k) =
(m_k,q_k)$ for every $k \in \omega$. From all these facts it follows
that given a function $h \colon A \subseteq \omega \times \QQ^+  \to
\omega \times \QQ^+$ with $A$
finite, the set of those $G \in \G$ for which $h$ can be extended to
an automorphism of $S(U_G)$ is defined by  the conjunction of the
following Borel conditions:
\begin{itemize}
 \item $h$ is a partial automorphism of $S(U_G)$ (i.e.\ it is injective and respects the predicates $R, Q_r$ of $S(U_G)$);
\item for every $(n,q) \in A$, if there is $r \in \QQ^+$ such that
  $Q_r$ does not hold for $(n,q)$ in $S(U_G)$ then $(n,q) \, R\,
  h(n,q)\,  R \, (n,q)$ (in $S(U_G)$);
\item for every $(n,q) \in A$, if $Q_r$ holds for $(n,q)$ in $S(U_G)$
  for every $r \in \QQ^+$ and $h(n,q) = (n',q')$, then $n\, D_G \,
  n'$.
\end{itemize}
Indeed, fix $G \in \G$ and for $(n,q) \in \omega \times \QQ^+$ put $B_{n,q} =
B(\psi_n(U_G),q)$. Now notice that for each $(n,q) \in \omega \times
\QQ^+$ there are infinitely many distinct pairs $(n',q') \in \omega
\times \QQ^+$ such that $B_{n,q} = B_{n',q'}$ (so that in $S(U_G)$ we have
$(n,q)\, R\, (n',q')\, R\, (n,q)$ and $Q_r(n,q) \iff Q_r(n',q')$ for
every $r \in \QQ^+$): such pair will be called \emph{names} for the
basic ball $B_{n,q}$. If $h$ satisfies all the conditions above then it can
be extended to an automorphism $H$ of $S(U_G)$ in the following way.
Let $(n,q) \in A$ and let $(m,r) = h(n,q)$:
 if $B_{n,q}$ contains two distinct points, then $B_{n,q} = B_{m,r}$,
 while if $B_{n,q}$ isolate some point $x \in U_G$ then $B_{m,r}$
 isolate a point
 which belongs to the same fork of $x$ (hence $h$ induces partial
 permutations on the forks of $U_G$). Define $H$ on the names of
 $B_{n,q}$ by choosing any bijection extending $h$ between
such names and the names of $B_{m,r}$
 (this can be done because $A$ is finite and every basic ball
 have infinitely many
 names). For any other pair
 $(n',q')$ not yet considered (that is for any element of $\omega \times
 \QQ^+$ which is not the name of $B_{n,q}$ for some
 $(n,q) \in A$), we consider two cases: if $B_{n',q'}$ does not isolate a point
 of $U_G$ define $H(n',q') = (n',q')$. Otherwise, $B_{n',q'} = \{ x
 \}$ where $x$ belongs to some fork $F$ of $U_G$. Choose
 any permutation of the elements of $F$ which extends the partial
 permutation induced by $h$: if $x$ is sent to some $y$ by such
 permutation, then define $H$ on the names of $B_{n',q'}$ by choosing
 any permutation between such names and the names of the basic ball
 which isolate $y$. It is not hard to check that the $H$ constructed
 in this way is an
 automorphism of $S(U_G)$.
Conversely, if $H$ is an automorphism of $S(U_G)$  such that $h = H
\restriction A$, then the whole $H$ must satisfy the conditions above
because, as observed above, the isometry $H'$ of $U_G$ into itself
induced by $H$ must be the identity on its accumulation points and
set-wise fix all its forks: therefore $h$ satisfies such conditions as
well.
\end{proof}

\subsection{Linear isometric embeddings between separable Banach spaces}\label{banach}

Any
separable Banach space $X = (X, \| \cdot \|_X)$ is linearly isometric to
a closed subspace of
$\mathcal{C}([0,1])$ equipped with the sup norm, hence it is natural to
consider the Borel set $\mathcal{B}$ of
all closed linear subspaces of $\mathcal{C}([0,1])$ as the standard
Borel space of separable Banach spaces. For $X,Y \in \mathcal{B}$
say that $X$ \emph{linearly isometrically embeds} into $Y$ ($X
\sqsubseteq^{li} Y$ in symbols) if there is
a linear isometric embedding, that is a linear and norm-preserving map,
between $X$ and $Y$. The relation $\sqsubseteq^{li}$ is obviously an
analytic quasi-order, and the relation $\cong^{li}$ of \emph{linear
  isometry}  is the corresponding  $\sqsubseteq^{li}$-isomorphism.

In \cite{louros}
it was proved that $\sqsubseteq^{li}$ is a complete  analytic
quasi-order, so we would like to improve this result by showing that
$\sqsubseteq^{li}$ is indeed invariantly universal (coupled with
$\cong^{li}$).  In
order to apply
Theorem \ref{theorsaturation}, we need a Borel map $f$ simultaneously
reducing embeddability and isomorphism
on $\G$ to, respectively,  $\sqsubseteq^{li}$ and $\cong^{li}$, and
then a Borel
map $g$ reducing $\cong^{li}$ (possibly restricted to some standard
Borel space $Z$, invariant under linear isometry, with $f(\G) \subseteq Z \subseteq
\mathcal{B}$, so that
$\cong^{li}$-saturation in $Z$ will coincide with
$\cong^{li}$-saturation in $\mathcal{B}$)  to some orbit equivalence
relation in such a way that the map $\Sigma$ assigning to each $G \in \G$ the
stabilizer of $g(f(G))$ is Borel. A map $f$ with the required properties
was
already defined in \cite[Theorem 4.6]{louros}. Moreover, $\cong^{li}$
(on the whole $\mathcal{B}$)
is reducible to an orbit equivalence relation: this is because by a
theorem of Mazur $\cong^{li}$ and $\cong^i$ coincide on
$\mathcal{B}$, and by \cite{gaokechris} the relation $\cong^i$ is in
turn Borel reducible to
(in fact, Borel bireducible with)
the orbit
equivalence relation on $F(\mathbb{U})$ induced by the natural action
of the group of
automorphisms of the Urysohn space $\mathbb{U}$. However, the last
reduction is not quite explicit, and hence it seems to be  very
difficult to have a control on the resulting map $\Sigma$. Therefore
we will restrict our attention to a suitable proper
$\cong^{li}$-saturated
$Z \subseteq \mathcal{B}$, and to define such $Z$ we will first need
to slightly modify the
original construction of the map $f$ from \cite{louros}.

Let $c_0$ be the Banach space of sequences converging to $0$ endowed
with the sup-norm $\| \cdot \|_\infty$, let $\langle e_p \mid p \in
\omega \rangle$ be the usual base of $c_0$, and denote elements of
$c_0$ by $\Sigma_n \alpha_n e_n$. Given $G \in \G$ define\footnote{The
norm $\| \cdot \|_G$ (and hence the Banach space $X_G$) can actually be
defined starting from \emph{any} graph $G$, and not just from graphs
in $\G$. Moreover,
Lemma \ref{lemmareduction} would still hold in this more general
setup.} a new norm
$\| \cdot \|_G$ on $c_0$ by
\[ \| \Sigma_n \alpha_n e_n \|_G = {\rm sup} \left\{ |\alpha_i| +
\frac{|\alpha_j|}{3 - \chi_G(i,j)} \mid i \neq j \in \omega \right \}, \]
where $\chi_G \colon \omega \times \omega \to \{ 0,1 \}$ is the
characteristic function of the graph relation of
$G$. As for the norms defined in the proof of \cite[Theorem
4.6]{louros}, it is easy to check that $\| \cdot \|_G$ is indeed
equivalent to $\| \cdot \|_\infty$, as $\| \Sigma_n \alpha_n e_n
\|_\infty \leq \| \Sigma_n \alpha_n e_n \|_G \leq \frac{3}{2} \|
\Sigma_n \alpha_n e_n
\|_\infty$, and that the map $f$ sending $G \in \G$ into $X_G = (c_0,
\| \cdot \|_G)$ is
Borel (when we identify $(c_0, \| \cdot \|_G)$ with its linearly
isometric copy in $\mathcal{B}$).

\begin{lemma} \label{lemmareduction}
The map $f$ reduces $\sqsubseteq$ to $\sqsubseteq^{li}$ and $\cong$ to
$\cong^{li}$.
\end{lemma}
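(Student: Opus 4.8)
The plan is to analyze the norm $\| \cdot \|_G$ carefully and show that, from a linear isometric embedding $\Phi \colon X_G \to X_H$, one can read off a graph embedding $\gamma \colon \omega \to \omega$, and conversely. First I would establish the basic structure of $\| \cdot \|_G$. Notice that since $2 \leq 3 - \chi_G(i,j) \leq 3$ for every $i \neq j$, and since $G \in \G$ is a connected graph (being a copy of some $G_T$, hence infinite and with vertices of various valences), the value of $\| e_i \|_G$ can be computed: $\| e_i \|_G = \sup\{ 1 + 0, 0 + \frac{0}{\cdots} \} \cdot$ — more precisely $\| e_i \|_G = 1$ since only the $|\alpha_i|$ term contributes when the vector is $e_i$. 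Similarly, for a two-term vector $\alpha e_i + \beta e_j$ one gets $\| \alpha e_i + \beta e_j \|_G = \max\{ |\alpha| + \frac{|\beta|}{3 - \chi_G(i,j)}, |\beta| + \frac{|\alpha|}{3-\chi_G(i,j)} \}$. The key extremal observation is that $i,j$ are adjacent in $G$ if and only if $\| e_i + e_j \|_G = 1 + \frac{1}{2} = \frac{3}{2}$, whereas $i,j$ non-adjacent (but distinct) gives $\| e_i + e_j \|_G = 1 + \frac{1}{3} = \frac{4}{3}$; and the norm of $e_i$ alone is always $1$. So the graph relation is encoded in the norms of the vectors $e_i + e_j$.

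Next I would pin down which vectors of $X_G$ can possibly be the images of the base vectors $e_i$ under a linear isometric embedding. The vectors $e_i$ are the \emph{extreme points of a specific kind} — more usefully, I would characterize the set $\{ \pm e_i \mid i \in \omega \}$ (or a suitable canonical subset of it) intrinsically in terms of the norm, for instance via the fact that $e_i$ is the unique (up to sign and limits) family of norm-one vectors $v$ such that for every other $w$ in the family, $\| v + w \|_G \in \{ \frac43, \frac32 \}$ and such that $\| v \pm w \|_G$ takes the specified extremal values. Since $\| \cdot \|_G$ is equivalent to $\| \cdot \|_\infty$ and $X_G$ has $c_0$ as underlying space, a linear isometry $\Phi \colon X_G \to X_H$ is in particular a bounded linear injection of $c_0$ into $c_0$; using weak-$*$ arguments (or the description of $c_0$ as having the $e_i$ determined, up to the natural ambiguities, by their role in the norm) one shows $\Phi(e_i) = \pm e_{\gamma(i)}$ for an injection $\gamma \colon \omega \to \omega$. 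Then the preservation of $\| e_i + e_j \|_G$ forces $i,j$ adjacent in $G \iff \gamma(i), \gamma(j)$ adjacent in $H$, i.e.\ $\gamma$ is a graph embedding. This gives $X_G \sqsubseteq^{li} X_H \imp G \sqsubseteq H$. Conversely, given a graph embedding $\gamma \colon G \to H$, the induced map $\sum_n \alpha_n e_n \mapsto \sum_n \alpha_n e_{\gamma(n)}$ is visibly linear and, because $\chi_G(i,j) = \chi_H(\gamma(i),\gamma(j))$ for $i \neq j$, it is norm-preserving; this handles $G \sqsubseteq H \imp X_G \sqsubseteq^{li} X_H$. For the isomorphism statement, one runs the same argument with $\gamma$ surjective: a surjective graph embedding is a graph isomorphism, and a surjective linear isometry is a linear isometry onto, so $G \cong H \iff X_G \cong^{li} X_H$ drops out of the $\sqsubseteq$-version together with the observation that $\Phi$ surjective forces $\gamma$ surjective and vice versa.

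The main obstacle I anticipate is the rigidity step: showing that \emph{every} linear isometric embedding $\Phi \colon X_G \to X_H$ must carry each $e_i$ to $\pm e_{\gamma(i)}$, rather than to some more complicated norm-one vector. In the underlying space $c_0$ there are many norm-one vectors for $\| \cdot \|_\infty$, and one has to exploit the precise shape of $\| \cdot \|_G$ — in particular the fact that the "interaction coefficient" $\frac{1}{3 - \chi_G(i,j)}$ is strictly between $0$ and $1$ and takes only the two values $\frac13, \frac12$ — to rule out mixing of coordinates. I expect the cleanest route is to identify $\{\pm e_i\}$ with the set of vectors $v$ of norm $1$ that are \emph{not} a proper convex combination, within the unit ball, of other norm-one vectors in a suitably quantified sense, or equivalently to use the (essentially standard, cf.\ the treatment in \cite{louros}) analysis of isometries of spaces built by such "weighted sup" norms on $c_0$; this is exactly the kind of computation already carried out in the proof of \cite[Theorem 4.6]{louros}, so I would appeal to or closely imitate that argument rather than redo it from scratch.
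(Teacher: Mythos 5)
Your proposal is correct and follows essentially the same route as the paper: the forward direction is the direct computation showing that a graph embedding $h$ induces the linear isometry $\Sigma_n\alpha_n e_n\mapsto\Sigma_n\alpha_n e_{h(n)}$, and the converse defers the rigidity step (that a linear isometric embedding sends each $e_p$ to $\pm e_{h(p)}$) to the argument of \cite[Theorem 4.6]{louros} and then reads off adjacency from $\|e_p+e_q\|_G = 1+\tfrac{1}{3-\chi_G(p,q)}$, which is exactly what the paper does, including the remark that the surjective case yields the isomorphism statement.
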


\begin{proof}
The proof is almost identical to that of \cite[Theorem
4.6]{louros}. If $h$ is an embedding of $G$ into $H$ then $\Sigma_n
\alpha_n e_n \mapsto \Sigma_n \alpha_n e_{h(n)}$ is a linear isometric
embedding of  $X_G$ into $X_H$.

Conversely, for any linear isometric embedding $h' \colon X_G \to
X_H$, one first proves (copying the original proof word by word) that
there is $h \colon \omega \to \omega$ such
that for every $p \in \omega$ there is $\epsilon_p \in \{ 1, -1\}$ for
which $h'(e_p) = \epsilon_p e_{h(p)}$. Arguing as in the original
proof one gets that for $p \neq q$ it is true that $h(p) \neq h(q)$
and moreover
\begin{multline*}
  1 + \frac{1}{3 - \chi_G(p,q)} = \| \epsilon_p e_p + \epsilon_q e_q
\|_G = \| h'(\epsilon_p e_p + \epsilon_q e_q)\|_H = \\
 = \| e_{h(p)} +
e_{h(q)} \|_H = 1 + \frac{1}{3 - \chi_H(h(p),h(q))},
\end{multline*}
so that $h$ is an embedding between $G$ and $H$ because $\chi_G(p,q) =
\chi_H(h(p),h(q))$.

As noted in \cite[Remark 4.7]{louros}, the above construction yields
also the result about isomorphism and linear isometry.
\end{proof}

It is not hard to check that in a Banach space of the form $X_G$ the
elements $e_p$ of the base and their opposite $-e_p$ are the unique
extreme points of the unit ball. Consider the set
\[
E =\{(x,X) \in \mathcal{C}([0,1]) \times \mathcal{B} \mid
x \text{ is an extreme point of the unit ball of } X\}.
\]
It is straightforward to check that $E$ is coanalytic, and it follows
from a result of Kaufman (\cite[Section III]{Kaufman}) that $E$ is not
Borel. This means that we cannot use $E$ to make our subsequent
computations work. Therefore we use the following stronger notion.

\begin{defin}
A point $x$ of the unit ball $B_X$ of the Banach space $X$ is
\emph{strongly extreme}  if and only if
\begin{equation} \label{eqse}
 \forall \varepsilon > 0\, \exists \delta > 0 \,\forall y,z \in B_X
 \left( \left\|
x - \frac{y+z}{2} \right\|_X \leq \delta \imp \| y-z \|_X \leq
\varepsilon \right).
\end{equation}
\end{defin}

Obviously, if $x \in X$ is a strongly extreme point of $B_X$ then it is
also an extreme point of $B_X$ (but the converse does not hold in
general). Moreover, it is easy to check using standard arguments that
\[
SE =\{(x,X) \in \mathcal{C}([0,1]) \times \mathcal{B} \mid
x \text{ is a strongly extreme point of the unit ball of } X\}.
\]
is Borel. In fact it is enough to check that $(x,X) \in SE$ if and only
if is satisfied the restricted version of \eqref{eqse} obtained by
considering just $\varepsilon,\delta  \in \QQ$ and $y,z$ ranging over
some countable dense subset of $B_X$.

\begin{lemma}\label{lemmase}
For any $G \in \G$, $\epsilon \in \{ -1,1\}$ and $p \in \omega$,
$\epsilon e_p$ is a strongly extreme point of  $B_{X_G}$.
\end{lemma}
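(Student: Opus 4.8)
The plan is to verify condition \eqref{eqse} directly from the explicit formula for $\|\cdot\|_G$, since the combinatorics of $G$ play no role. By the symmetry $x \mapsto -x$, which is a linear isometry of $X_G$ preserving the unit ball and carrying $e_p$ to $-e_p$, it suffices to treat the case $\epsilon = 1$. I would write arbitrary elements of $B_{X_G}$ as $y = \Sigma_n \beta_n e_n$ and $z = \Sigma_n \gamma_n e_n$, and record at the outset the elementary bound $\|\cdot\|_\infty \le \|\cdot\|_G$ already noted in the paper (so that in particular $|\beta_p|, |\gamma_p| \le 1$), together with the fact that $\tfrac{1}{3 - \chi_G(i,j)} \le \tfrac12$ for all $i \ne j$.

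The first key step is to extract from the hypothesis $\bigl\| e_p - \tfrac{y+z}{2} \bigr\|_G \le \delta$ only the estimate coming from the single pair $(i,j) = (p, j)$ in the defining supremum, for some fixed $j \ne p$: this gives $\bigl| 1 - \tfrac{\beta_p + \gamma_p}{2} \bigr| \le \delta$, hence $\beta_p + \gamma_p \ge 2 - 2\delta$, which combined with $\beta_p, \gamma_p \le 1$ forces $\beta_p, \gamma_p \in [1 - 2\delta, 1]$ once $\delta < \tfrac12$; in particular $|\beta_p - \gamma_p| \le 2\delta$. The second, and crucial, step is to feed $|\beta_p| \ge 1 - 2\delta$ into the unit-ball constraint $\|y\|_G \le 1$ applied to the pair $(p, j)$: this yields $\tfrac{|\beta_j|}{3 - \chi_G(p,j)} \le 1 - |\beta_p| \le 2\delta$, so $|\beta_j| \le 2\delta\,(3 - \chi_G(p,j)) \le 6\delta$ for every $j \ne p$, and symmetrically $|\gamma_j| \le 6\delta$. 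Hence $|\beta_n - \gamma_n| \le 12\delta$ for all $n$, and every term $|\beta_i - \gamma_i| + \tfrac{|\beta_j - \gamma_j|}{3 - \chi_G(i,j)}$ in the supremum defining $\|y - z\|_G$ is at most $12\delta + \tfrac12 \cdot 12\delta = 18\delta$. So, given $\varepsilon > 0$, it suffices to take $\delta = \min\{\tfrac14, \tfrac{\varepsilon}{18}\}$.

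I do not anticipate a genuine obstacle; the only subtlety is the order of the two estimates — one must first pin down $\beta_p$ and $\gamma_p$ near $1$ using the proximity hypothesis, and only afterwards invoke $\|y\|_G, \|z\|_G \le 1$ to force all other coordinates of $y$ and $z$ to be $O(\delta)$. As the footnote accompanying the definition of $\|\cdot\|_G$ suggests, this argument uses nothing about $G$ beyond its being a graph, so the lemma in fact holds for $X_G$ built from an arbitrary graph $G$.
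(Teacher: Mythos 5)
Your proof is correct and follows essentially the same route as the paper's: same choice $\delta = \varepsilon/18$, same two-stage estimate first pinning $\beta_p,\gamma_p$ near $1$ via the proximity hypothesis and then bounding the off-coordinates by $6\delta$ via the unit-ball constraint, yielding $\|y-z\|_G \le 18\delta$. The only differences are cosmetic: you spell out the symmetry reduction to $\epsilon=1$ and the cap $\delta < 1/2$, both of which the paper leaves implicit.
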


\begin{proof}
Given $\varepsilon > 0$ put $\delta = \frac{1}{18} \varepsilon$. Let
$y = \Sigma_n \beta_n e_n,z = \Sigma_n \gamma_n e_n \in B_{X_G}$ be such that
$\| e_p - \frac{y+z}{2} \|_G \leq\delta $. The hypotheses on $y,z$ easily imply $1 - 2 \delta \leq
|\beta_p|, |\gamma_p| \leq 1$. But then  for $n \neq p$ we must have
$|\beta_n|,|\gamma_n| \leq 6 \delta$ since $y$ and $z$ both belong to
the unit ball, whence $|\beta_n - \gamma_n| \leq 12 \delta$ for every
$n \in \omega$. This implies that for every $i \neq j \in \omega$
\[ |\beta_i - \gamma_i| + \frac{|\beta_j - \gamma_j|}{3 - \chi_G(i,j)}
\leq 12 \delta + \frac{12 \delta}{2} =18\delta =\varepsilon ,\]
as required.
\end{proof}

Since all the points of $X_G$ which are not of the form $\epsilon
e_p$ are not even extreme points of $B_{X_G}$, from the previous lemma we can
conclude that $\{ \epsilon e_p \mid \epsilon \in \{ -1,1\}, p
\in \omega \}$ is the collection of all strongly extreme points of $B_{X_G}$.

Notice that the definition of $X_G$ in \cite{louros} is such that the unit ball has no strongly extreme points.
This is the reason why we had to change a bit such definition.

Now let $A$ be the closure under linear isometry of $f(\G)$ and
\[ B =
\{ X \in \mathcal{B} \mid \forall x,y ({SE(x,X) \wedge SE(y,X)} \imp
\|x-y\|_X > 1) \} . \]
Both $A,B$ are invariant with respect to $\cong^{li}$, $A$ is
analytic, $B$ is coanalytic,  and $A \subseteq B$ by direct computation of the values of $\|e_i - e_j
\|_G$ for $i \neq j \in \omega,G \in \G$. Therefore by
\cite[Exercise 14.14]{Kechris1995} there is a Borel set $C$ which is
$\cong^{li}$-invariant and such that $A \subseteq C \subseteq B$.
Notice that for each element of $C$ there are at most countably many strongly
extreme points of  its unit ball, as the same property clearly holds for all
elements of $B$ by separability of the Banach spaces under
consideration. This means that
the Borel set
\[ D = \{ (X,x) \in C \times \mathcal{C}([0,1]) \mid SE(x,X) \} \]
has countable vertical sections, so that by \cite[Exercise 18.15]{Kechris1995}
\[ C_\omega = \{ X \in C \mid
B_X\text{ has }\omega\text{-many strongly extreme points} \} \]
 is Borel as well and  there is a sequence $f_n \colon C_\omega \to
\mathcal{C}([0,1])$ of Borel functions such that $\langle f_n(X) \mid n \in
  \omega \rangle$ is an enumeration without repetitions of the strongly
extreme points of $B_X$ (for every $X \in C_\omega$).
Notice also that $C_\omega$ is obviously $\cong^{li}$-invariant and
contains $f(\G)$.

Now we further refine the set $C_\omega \subseteq \mathcal{B}$. Let
$\psi_n \colon C_\omega \to
\mathcal{C}([0,1])$ be a sequence of Borel functions such that for
every $X \in C_\omega$ the set $\{ \psi_n(X) \mid n \in \omega \}$ is
a dense subset of $X$. Given $X \in C_\omega$, put $X \in Z$ if and only if
for every $\varepsilon \in \QQ^+$ and every $m \in \omega$ there
  are $n \in \omega$, $\alpha_0 , \dotsc \alpha_n \in \QQ \setminus \{
  0 \}$, and $k_0, \dotsc, k_n \in \omega$ such that  $\|
  \psi_m(X) - (\alpha_0 f_{k_0}(X) + \dotsc + \alpha_n f_{k_n}(X)) \|_X
  \leq \varepsilon$, that is  if and only if the rational linear
  combinations of strongly extreme points of $B_X$ are dense in
$X$.

It is easy to check that $Z$  is Borel, $\cong^{li}$-invariant, and
contains $f(\G)$. The motivation in restricting our attention
to this particular $Z$ is that, as we will see, $\cong^{li}$ on $Z$ is
actually Borel
reducible to a relation of isomorphism on countable structures. Indeed, let $\Lambda$
be the language containing all $n+1$-ary relational symbols of the
form  $R^{\alpha_0,
  \dotsc, \alpha_n}_q$ for $n \in \omega$, $\alpha_0, \dotsc, \alpha_n
\in \QQ$, and $q \in \QQ^+$. Given $X \in Z$, define the
$\Lambda$-structure $S(X)$ on $\omega$ by stipulating that for each
$k_0, \dotsc, k_n \in \omega$  the predicate $R^{ \alpha_0, \dotsc,
  \alpha_n}_q(k_0, \dotsc, k_n)$ holds in $S(X)$ if and only if $\|
\alpha_0 f_{k_0}(X) + \dotsc + \alpha_n f_{k_n}(X) \|_X < q$.

\begin{lemma}\label{lemmaliiso}
 The function $g \colon Z \to Mod_\Lambda \colon X \mapsto S(X)$ is a
 Borel reduction of $\cong^{li}$ to $\cong$.
\end{lemma}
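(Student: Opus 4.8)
The plan is to show that $g$ is Borel and then verify separately that $X \cong^{li} Y$ implies $S(X) \cong S(Y)$ and conversely. Borelness is immediate: each predicate $R^{\alpha_0,\dotsc,\alpha_n}_q(k_0,\dotsc,k_n)$ is defined by the condition $\| \alpha_0 f_{k_0}(X) + \dotsc + \alpha_n f_{k_n}(X)\|_X < q$, and this is a Borel condition in $X$ since each $f_{k_i}$ is a Borel function $C_\omega \to \mathcal{C}([0,1])$, the norm $\|\cdot\|_X$ depends in a Borel way on $X \in \mathcal{B}$, and finite linear combinations and the strict inequality preserve Borelness. Hence $X \mapsto S(X)$ (coded as a structure on $\omega$) is Borel.

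For the forward direction, suppose $T \colon X \to Y$ is a surjective linear isometry, with $X, Y \in Z$. Since $T$ is a linear isometry, it carries strongly extreme points of $B_X$ onto strongly extreme points of $B_Y$ (condition \eqref{eqse} is manifestly preserved by surjective linear isometries), and this correspondence is a bijection because $T^{-1}$ is also a surjective linear isometry. Thus there is a permutation $\pi$ of $\omega$ with $T(f_k(X)) = f_{\pi(k)}(Y)$ for all $k$. Since $T$ is linear and norm-preserving, for any $\alpha_0,\dotsc,\alpha_n \in \QQ$, $q \in \QQ^+$, and $k_0,\dotsc,k_n \in \omega$ we have
\[
\| \alpha_0 f_{k_0}(X) + \dotsc + \alpha_n f_{k_n}(X) \|_X = \| \alpha_0 f_{\pi(k_0)}(Y) + \dotsc + \alpha_n f_{\pi(k_n)}(Y) \|_Y,
\]
so $\pi$ is an isomorphism from $S(X)$ to $S(Y)$.

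Conversely, suppose $\sigma \colon \omega \to \omega$ witnesses $S(X) \cong S(Y)$. Define $T_0$ on the (dense, by membership in $Z$) linear span of $\{f_k(X) \mid k \in \omega\}$ by $T_0\bigl(\sum_i \alpha_i f_{k_i}(X)\bigr) = \sum_i \alpha_i f_{\sigma(k_i)}(Y)$. The preservation of all the predicates $R^{\alpha_0,\dotsc,\alpha_n}_q$ by $\sigma$, applied to all rationals $q$, shows that $\|\sum_i \alpha_i f_{k_i}(X)\|_X = \|\sum_i \alpha_i f_{\sigma(k_i)}(Y)\|_Y$ for all rational coefficients; in particular $T_0$ is well-defined (independent of the representation), linear over $\QQ$, and isometric, hence extends uniquely to a linear isometry $T \colon X \to Y$. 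Using $\sigma^{-1}$ in the same way produces an inverse, so $T$ is onto; thus $X \cong^{li} Y$. The only genuinely delicate point is ensuring $T_0$ is well-defined and its range is dense in $Y$: both rest on the defining property of $Z$, namely that rational linear combinations of strongly extreme points of the unit ball are norm-dense, which is exactly why $Z$ was introduced.
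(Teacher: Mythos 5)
Your proposal is correct and follows essentially the same route as the paper's proof: in the forward direction, a surjective linear isometry induces a bijection of strongly extreme points and hence a permutation of $\omega$ preserving the predicates; in the converse direction, you define the map on the $\QQ$-linear span of $\{f_k(X)\}$, use the predicates (over all rational thresholds $q$) to get norm preservation and hence well-definedness, and then extend by density — which is exactly what membership in $Z$ guarantees. The paper spells out the well-definedness check (that $\sum_i\alpha_if_{k_i}(X)=\sum_j\beta_jf_{l_j}(X)$ implies the same for the $\sigma$-images) a bit more explicitly via the difference vector having norm zero, whereas you compress it into a single clause, but the content is the same. One small point of precision: "linear span" should be understood as $\QQ$-linear span throughout (as you do implicitly by taking rational coefficients and noting $\QQ$-linearity), since the formula for $T_0$ only makes sense with rational coefficients until after the density extension.
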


\begin{proof}
 Clearly $g$ is a Borel map. Let $X,Y \in Z$ be such that $X
 \cong^{li} Y$ via some $h$: then $h$ must be a bijection between the
 strongly extreme points of $B_X$ and $B_Y$, and therefore it
 induces in the obvious way a permutation $H$ of $\omega$ such that
 $h(f_n(X)) = f_{H(n)}(Y)$ for every $n \in
 \omega$. Moreover, since $h$ is linear
 and norm-preserving it is
 easy to verify that for every $\alpha_0, \dotsc, \alpha_n \in \QQ$, $q \in \QQ^+$ and $k_0, \dotsc, k_n \in \omega$
 one has that $R^{ \alpha_0, \dotsc, \alpha_n}_q(k_0, \dotsc, k_n)$
 holds in $S(X)$ if and only if $R^{ \alpha_0, \dotsc,
   \alpha_n}_q(H(k_0), \dotsc, H(k_n))$ holds in $S(Y)$, so that $S(X)
 \cong S(Y)$.

Conversely, let $H$ be an isomorphism between $S(X)$ and $S(Y)$, and
consider the linear extension $h$ of the map $f_n(X) \mapsto
f_{H(n)}(Y)$ to the sets of vectors which are rational linear
combinations of the extreme points of $B_X$ and $B_Y$ (denoted
respectively as $RLC_X$ and $RLC_Y$). To show that $h$ is well
defined, note first that it is norm-preserving, by the definition of
the predicates $R^{\alpha_0, \dotsc, \alpha_n}_q$ in $S(X)$ and
$S(Y)$. In particular, $h$ is well defined on the zero vector, and
its image is the zero vector. Let $\alpha_0, \dotsc, \alpha_n,
\beta_0, \dotsc , \beta_m \in \QQ, k_0, \dotsc, k_n, l_0, \dotsc, l_m
\in \omega$ be such that
$\sum_{i=0}^n\alpha_if_{k_i}(X)=\sum_{j=0}^m\beta_0 f_{l_j}(X)$. Then
$\sum_{i=0}^n\alpha_if_{H(k_i)}(Y)-\sum_{j=0}^m\beta_jf_{H(l_j)}(Y)$
is the image under $h$ of
$\sum_{i=0}^n\alpha_if_{k_i}(X)-\sum_{j=0}^m\beta_0 f_{l_j}(X)=0$. So
$\sum_{i=0}^n\alpha_if_{H(k_i)}(Y)-\sum_{j=0}^m\beta_jf_{H(l_j)}(Y)=0$.
Similarly, one can prove that $h$ is injective. Being surjective too,
$h$ is a bijection $RCL_X\to RCL_Y$.

By definition, $h$ is linear with
respect to rational linear combinations, i.e.\ if $v,w \in RLC_X$
and $\alpha, \beta \in \QQ$ then $h(\alpha v + \beta w) = \alpha h(v)
+ \beta h(w)$. But since $RLC_X$ and $RLC_Y$ are dense in $X$ and $Y$
(respectively) by the definition of $Z$, using standard
arguments we have that
$h$ extends to a (unique) linear isometry between $X$ and $Y$, so that
$X \cong^{li} Y$.
\end{proof}

Now we are ready to prove that $\sqsubseteq^{li}$ is invariantly universal.

\begin{theorem}
 The relation of linear isometric embeddability $\sqsubseteq^{li}$
 between separable Banach spaces is invariantly universal, that is for
 every analytic quasi-order $R$ there is a Borel class $\mathcal{C}
 \subseteq \mathcal{B}$ closed under linear isometry such that $R
 \sim_B \sqsubseteq^{li} \restriction \mathcal{C}$.
\end{theorem}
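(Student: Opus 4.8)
The plan is to verify the hypotheses of Theorem \ref{theorsaturation} with $S = {\sqsubseteq^{li}}$ and $E = {\cong^{li}}$ on the standard Borel space $Z$ fixed above, with $Y = S_\infty$, with $a$ the logic action of $S_\infty$ on $W = Mod_\Lambda$, and with $f$ and $g$ the maps already defined (so that the relation $\cong$ in the conclusion of Lemma \ref{lemmaliiso} is exactly the orbit equivalence relation of $a$). By Lemma \ref{lemmareduction}, the injectivity of $f$ on $\G$, and Corollary \ref{mathbbg}, the map $f \colon \G \to Z$ simultaneously witnesses ${\sqsubseteq_\G} \leq_B {\sqsubseteq^{li}}$ and ${=_\G} \leq_B {\cong^{li}}$; by Lemma \ref{lemmaliiso}, $g$ witnesses ${\cong^{li}} \leq_B {\cong}$. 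So steps (a) and (b) of the recipe following Theorem \ref{theorsaturation} are in place, and it remains to carry out step (c): to prove that the map $\Sigma \colon \G \to G(S_\infty)$ sending $G$ to the stabilizer of $S(X_G)$ — equivalently, to $Aut(S(X_G))$ — is Borel.

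The crucial point is the description of the surjective linear isometries of $X_G$: I claim these are exactly the \emph{sign flips} $\sum_n \alpha_n e_n \mapsto \sum_n \epsilon_n \alpha_n e_n$ determined by a function $\epsilon \colon \omega \to \{-1,1\}$. That each such map is a surjective linear isometry of $X_G$ is clear, since $\| \cdot \|_G$ depends only on the absolute values of the coordinates. For the converse, let $h$ be a surjective linear isometry of $X_G$; since surjective linear isometries carry strongly extreme points of $B_{X_G}$ onto strongly extreme points of $B_{X_G}$, and by Lemma \ref{lemmase} together with the observation following its proof the strongly extreme points of $B_{X_G}$ are precisely the vectors $\pm e_p$, we have $h(e_p) = \epsilon_p e_{\pi(p)}$ for some $\epsilon \colon \omega \to \{-1,1\}$ and some permutation $\pi$ of $\omega$. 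Applying $h$ to $e_i + e_j$, using $\| e_i + e_j \|_G = 1 + \frac{1}{3 - \chi_G(i,j)}$ and $\| \epsilon_i e_{\pi(i)} + \epsilon_j e_{\pi(j)} \|_G = \| e_{\pi(i)} + e_{\pi(j)} \|_G$ (again because the norm ignores signs, and $\pi(i) \neq \pi(j)$), we get $\chi_G(i,j) = \chi_G(\pi(i),\pi(j))$ for all $i \neq j$, so $\pi \in Aut(G)$; but every $G \in \G$ is rigid by Corollary \ref{cornontrivial}, whence $\pi = id$ and $h$ is a sign flip.

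Combining this with the proof of Lemma \ref{lemmaliiso} — which shows that every automorphism of $S(X_G)$ is induced by a surjective linear isometry of $X_G$, and conversely every such isometry induces an automorphism of $S(X_G)$ since it is linear and norm-preserving — I conclude that $Aut(S(X_G))$ consists exactly of the permutations of $\omega$ induced by sign flips, that is
\[
\Sigma(G) = \{ \phi \in S_\infty \mid \forall n\ \phi(n) \in \{ n, n^*_G \} \},
\]
where $n^*_G$ denotes the unique $m$ with $f_m(X_G) = -f_n(X_G)$ (any $\phi \in S_\infty$ meeting the displayed condition is a product of disjoint transpositions $(n, n^*_G)$, hence arises from the sign flip equal to $-1$ exactly on the coordinates whose pair of names $\phi$ transposes, so the description is exact). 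Borelness of $\Sigma$ now follows: as $G \mapsto X_G = f(G)$ is Borel, each $G \mapsto f_n(X_G)$ is a Borel map $\G \to \mathcal{C}([0,1])$, and $\{ (x,y) \mid x = -y \}$ is closed in $\mathcal{C}([0,1])^2$, so ``$m = n^*_G$'' is Borel in $G$ for fixed $m,n$; hence for every finite partial injection $p$ of $\omega$ the set of $G \in \G$ admitting some $\phi \in \Sigma(G)$ with $p \subseteq \phi$ — equivalently, the set of $G$ such that $p(i) \in \{ i, n^*_G \}$ (for $n = i$) for every $i \in {\rm dom}(p)$ and $p(i) = i \iff p(j) = j$ whenever $i,j \in {\rm dom}(p)$ and $j = i^*_G$ — is Borel. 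Since such sets generate the Effros Borel structure on $G(S_\infty)$, $\Sigma$ is Borel, and Theorem \ref{theorsaturation} applies, yielding for every analytic quasi-order $R$ a Borel $\cong^{li}$-invariant $\mathcal{C} \subseteq Z \subseteq \mathcal{B}$ with $R \sim_B {\sqsubseteq^{li}} \restriction \mathcal{C}$.

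The step I expect to be the main obstacle is the structural claim of the second paragraph: that every surjective linear isometry of $X_G$ is a sign flip. This is precisely where the preparatory work pays off — the passage from extreme to \emph{strongly} extreme points (so that the set $SE$, and hence the enumeration $f_n$, is Borel), the exact value of $\| e_i + e_j \|_G$, and the rigidity of the graphs in $\G$ must all be used together; once it is in hand, the identification of $\Sigma(G)$ and the check that $\Sigma$ is Borel are routine.
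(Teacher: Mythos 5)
Your proposal is correct and follows essentially the same route as the paper: apply Theorem \ref{theorsaturation} with the same $f$, $g$, $Y = S_\infty$ and logic action, and reduce the Borelness of $\Sigma$ to the observation that, for $G \in \G$, every automorphism of $S(X_G)$ comes from a sign flip of the basis because $G$ is rigid; you then encode this via the Borel relation $n \mapsto n^*_G$ (the paper's relation $O_G$) and the generators of the Effros Borel structure on $G(S_\infty)$. The only cosmetic difference is that you derive the normal form $h'(e_p)=\epsilon_p e_{\pi(p)}$ directly from preservation of strongly extreme points under surjective linear isometries rather than quoting the computation in Lemma \ref{lemmareduction}, and you include the redundant extra consistency condition $p(i)=i \iff p(j)=j$ for $j=i^*_G$ (it already follows from injectivity of $p$ together with $p(i)\in\{i,i^*_G\}$, since $(i^*_G)^*_G=i$); neither changes the substance.
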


\begin{proof}
 We want to apply Theorem \ref{theorsaturation} to the maps $f$ and
 $g$ previously defined. Having already proved in Lemma
 \ref{lemmareduction} and Lemma
 \ref{lemmaliiso} that $f$ is as desired
 and $g$ reduces the linear isometry relation on $Z$ to the
 isomorphism relation, it remains only to show that the map $\Sigma$
 assigning to each $G \in \G$ the group of automorphisms of $S(X_G)$ is
 Borel. First notice that the map sending $G \in \G$ into the binary
 relation $O_G$ on $\omega$ defined by $n\, O_G\, m \iff f_n(X_G) = -
 f_m(X_G)$ is Borel. Given an injective sequence $s \in {}^{< \omega}
 \omega$, we now have that $S(X_G)$ has an automorphism extending $s$
 if and only if for every $i < | s|$, either $s_i = i$ or $s_i\, O_G
 \, i$: in fact, the proofs of Lemmas \ref{lemmareduction} and \ref{lemmaliiso}
 show that any automorphism $H$ of $S(X_G)$ such that $H(n) \neq n$
 and $\neg (H(n) \, O_G \, n)$ would
 induce a linear isometry $h$ of $X_G$ into itself such that $h(e_n) =
 \epsilon e_{H(n)}$ for $ \epsilon \in \{ 1,-1\}$, which in turn
 would naturally induce a nontrivial automorphism of $G$ sending $n$
 into $H(n)$, contradicting
 Corollary \ref{cornontrivial}. Therefore $\Sigma$ is Borel and we are
 done.
\end{proof}

\subsection{Further applications}\label{further}

In this section we will consider the remaining complete analytic quasi-orders which have appeared in the literature, and sketch the proofs that they are indeed invariantly universal.

Following \cite{Camerlo2005}, given two subcontinua $K,K'$ of Hilbert cube, we put:
\begin{itemize}
\item $K \preceq K'$ if there is a continuous surjection $K' \to K$;
\item $K \preceq_M K'$ if there is a monotone surjection $K' \to K$;
\item $K \preceq_R K'$ if there is an $r$-mapping $K' \to K$;
\item $K \preceq_W K'$ if there is a weakly confluent surjection $K' \to K$;
\item $K \preceq_O K'$ if there is an open continuous surjection $K' \to K$.
\end{itemize}

Clearly the isomorphism associated to each of these morphism relations is always the relation of homeomorphism $\simeq$. We start by considering the quasi-order $\preceq$ and give some hints on how to prove that the pair $(\preceq, \simeq)$ is indeed invariantly universal (we leave the details to the reader). In \cite[Theorem 3]{Camerlo2005} a Borel map $f$ was provided from countable graphs into compacta which reduces the embeddability relation to $\preceq$. As done in Subsection \ref{ref1}, it is not hard to check (using a back and forth argument for the nontrivial direction) that $f$ simultaneously reduces the isomorphism relation to $\simeq$.

In order to apply Theorem \ref{theorsaturation} we now would need to reduce
$\simeq$ to some orbit equivalence relation. Notice that, as observed
e.g.\ in \cite[Section 4.2]{louros}, $\simeq$ is reducible to the
natural action of the automorphism group of the Urysohn space
$\mathbb{U}$ on its closed subspaces (as by Banach-Stone $K \simeq K'
\iff (\mathcal{C}(K), \| \cdot \|_\infty) \cong^{li}
(\mathcal{C}(K'),\| \cdot \|_\infty)$). Unfortunately, with this reduction we lack control on the
complexity of the map assigning to each compacta the stabilizer of its
image under this reduction. Nevertheless, we can use an approach similar to
the one of Subsection \ref{banach}, i.e.\ we can restrict our attention to a
suitable Borel class $Z$ of compacta closed under homeomorphism and
containing $f(\G)$, and provide an \emph{ad hoc} reduction of ${\simeq}
\restriction Z$ to the isomorphism relation which satisfies the hypotheses of Theorem \ref{theorsaturation}.

However, in this particular case we can also use an even simpler argument to prove the invariant universality of $\preceq$: in fact, by the properties of Cook (sub)continua, the rigidity of each $G \in \G$ and the definition of the map $f$, we have that for every Borel $B \subseteq \G$, a compactum $K$ belongs to the $\simeq$-saturation of $f(B)$ if and only if there is a unique pair $(G,h)$ such that $G \in B$ and $h$ is an homeomorphism between $f(G)$ and $K$ (for more details we refer the reader to \cite{Camerlo2005}). Therefore the $\cong$-saturation of $f(B)$ is a Borel set by Luzin theorem \cite[Theorem 18.11]{Kechris1995} again, and as shown in the proof of Theorem \ref{theorsaturation} this fact implies that $(\preceq, \simeq)$ is invariantly universal.

Notice that this result automatically extends to the quasi-orders $\preceq_M$, $\preceq_R$ and $\preceq_W$ (see the explanation in \cite[p.\ 203]{Camerlo2005}), and even to any quasi-order $S$ on compacta such that ${{\preceq_M} \cap {\preceq_R}} \subseteq S \subseteq {\preceq}$ (when paired with $\simeq$). Moreover, it is not hard to show (applying Theorem \ref{theorsaturation} again) that $\preceq_O$ too is invariantly universal: this is because in \cite{Camerlo2005} it was proved that embeddability between graphs is reducible to $\preceq_O$ restricted to (very simple) dendrites, so that it is enough to combine the techniques developed in Subsections \ref{ref1} and \ref{dendrites} to get the desired result. Therefore we have the following:

\begin{theorem}
 The quasi-orders $\preceq,\preceq_M, \preceq_R, \preceq_W, \preceq_O$ are all invariantly universal. Moreover, for every quasi-order $S$ such that ${{\preceq_M} \cap {\preceq_R}} \subseteq S \subseteq {\preceq}$ the pair $(S, \simeq)$ is invariantly universal.
\end{theorem}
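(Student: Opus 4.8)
The plan is to treat $\preceq$ together with all quasi-orders pinched between ${\preceq_M}\cap{\preceq_R}$ and $\preceq$ by a single uniform argument, and then to handle $\preceq_O$ separately via Theorem \ref{theorsaturation} in full. For the first family I would start from the Borel map $f$ of \cite[Theorem 3]{Camerlo2005}, which sends each countable graph to a subcontinuum of the Hilbert cube built out of Cook continua and reduces $\sqsubseteq$ to $\preceq$; exactly as in Subsection \ref{ref1}, a back-and-forth argument shows that $f$ simultaneously reduces $\cong$ to $\simeq$. Restricting $f$ to $\G$ and using that each $G \in \G$ is rigid (Corollary \ref{cornontrivial}), the strong rigidity properties of Cook continua force any homeomorphism between $f(G)$ and $f(H)$ to come from an embedding (resp.\ isomorphism) of the underlying graphs, so that in fact a homeomorphism $f(G) \to f(H)$ entails $G = H$ and is itself unique. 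Consequently, for every Borel $B \subseteq \G$, a compactum $K$ lies in the $\simeq$-saturation of $f(B)$ if and only if there exists a \emph{unique} pair $(G,h)$ with $G \in B$ and $h$ a homeomorphism $f(G) \to K$; thus $\mathrm{Sat}(f(B))$ is the set of uniqueness of a Borel relation and hence Borel by Luzin's theorem \cite[Theorem 18.11]{Kechris1995}. This is precisely Claim \ref{claimmain}, and by the remark following Theorem \ref{theorsaturation} it already implies that $(\preceq,\simeq)$ is invariantly universal.

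The extension to the pinched quasi-orders is then essentially free: nothing in the above used $\preceq$ beyond the single fact that $f$ reduces $\sqsubseteq$ to it, and by \cite{Camerlo2005} the same $f$ reduces $\sqsubseteq$ to ${\preceq_M}\cap{\preceq_R}$, hence to every $S$ with ${\preceq_M}\cap{\preceq_R} \subseteq S \subseteq {\preceq}$. Since $f$ still reduces $\cong$ to $\simeq$, and the Borelness of $\mathrm{Sat}(f(B))$ depends only on $f$ and on $\simeq$ (not on $S$), Claim \ref{claimmain} holds verbatim and $(S,\simeq)$ is invariantly universal for each such $S$; in particular this covers $\preceq_M$, $\preceq_R$ and $\preceq_W$.

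For $\preceq_O$ the shortcut above is not available, so I would apply Theorem \ref{theorsaturation} directly. By \cite{Camerlo2005}, $\sqsubseteq$ on graphs reduces to $\preceq_O$ restricted to a class of very simple dendrites, via a map whose values on $\G$ are dendrites whose homeomorphism group is governed by ``simple'' symmetries of the type appearing in Subsection \ref{ref1}. The three steps are then: (a) take $f$ to be this reduction precomposed with the graph operation $G \mapsto G^*$ of Subsection \ref{ref1}, and check as there (by a back-and-forth argument) that $f$ also reduces $\cong$ to $\simeq$; (b) reduce $\simeq$ on the relevant class of dendrites to isomorphism on countable structures through a coding of the kind used in Subsection \ref{dendrites} (branch points, closures of maximal open free arcs, betweenness), which is Borel since the range of $f$ is Borel; (c) prove that the stabilizer map $\Sigma \colon \G \to G(S_\infty)$ is Borel by combining the ``simple automorphism'' bookkeeping of Lemma \ref{lemmaborelmap} with the branch-point/free-arc bookkeeping used in the dendrite theorem above, so as to express ``$h$ extends to an automorphism of the coded structure'' as a conjunction of Borel conditions on finite partial maps. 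Theorem \ref{theorsaturation} then yields that $(\preceq_O,\simeq)$ is invariantly universal.

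The main obstacle is not in the first family — there the only real content is extracting from Cook-continuum rigidity the uniqueness of the pair $(G,h)$, which is already implicit in \cite{Camerlo2005} — but in step (c) of the $\preceq_O$ case: verifying that the map assigning to $G \in \G$ the automorphism group of the coded structure of its dendrite is Borel requires genuinely merging the two earlier techniques into one explicit list of Borel conditions, and keeping track of how the ``simple'' symmetries of the $G^*$-part interact with the arc/branch-point combinatorics of the dendrite-part. I expect this to be the delicate point; everything else is a routine combination of arguments already carried out in Subsections \ref{ref1} and \ref{dendrites}.
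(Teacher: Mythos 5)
Your proposal follows essentially the same route as the paper: for $\preceq$ and the pinched family you use the Cook-continuum rigidity together with the rigidity of graphs in $\G$ to show that $\mathrm{Sat}(f(B))$ is the set of uniqueness of a Borel relation (hence Borel by Luzin), and invoke the remark after Theorem \ref{theorsaturation} to conclude; you then observe, as the paper does, that this works verbatim for every $S$ between ${\preceq_M}\cap{\preceq_R}$ and $\preceq$ because the saturation only involves $f$ and $\simeq$, and for $\preceq_O$ you apply Theorem \ref{theorsaturation} directly by merging the bookkeeping of Subsections \ref{ref1} and \ref{dendrites}. The only point on which you are slightly more committal than the paper is the explicit suggestion of precomposing with $G\mapsto G^*$ in the $\preceq_O$ case, which is a reasonable reading of the paper's ``combine the techniques'' remark rather than a deviation.
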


\section{Open problems}
We conclude the paper by stating a few questions about invariantly
universal analytic pairs which, in our opinion, deserve further
investigation.

\begin{question}\label{q:=}
Is the pair $(S,{=})$ invariantly universal whenever $S$ is a complete
analytic quasi-order?
\end{question}

Notice that if $S$ witnesses that the answer to the previous question
is negative, then $(S,E)$ is not invariantly universal for all analytic
equivalence relations $E$ (since ${=} \subseteq E$).

A more general question is the following:

\begin{question}\label{q:leq}
Suppose that $(S,F)$ is invariantly universal, $E \subseteq E_S$ and $E
\leq_B F$. Does it follow that $(S,E)$ is also invariantly universal?
\end{question}

The intuition behind Question \ref{q:leq} is that when $E \leq_B F$ it
should be easier to find $E$-invariant sets than to find $F$-invariant
sets.

The last questions (which are related) are necessarily less precise, as
they involve the notions of \emph{natural morphism relation} and
\emph{natural pair}. Here a ``natural'' morphism relation is any
morphism relation which is of independent interest in some area of
mathematics, while a natural pair would typically consist of a
quasi-order which is a natural morphism relation and of its isomorphism
relation (where both the morphism relation and its isomorphism relation are intended to be analytic).

\begin{question}
Is there a natural pair $(S,E)$ which is not invariantly universal,
although $S$ is a complete analytic quasi-order?
\end{question}

Recall that in the  introduction we gave an example of analytic quasi-orders $S$ and $R$ such that $(S,E_S)$ is invariantly universal while $(R,E_R)$ is not. The next question asks whether there exist (natural) morphism relations behaving in each of these ways.

\begin{question}\label{questionfinal}
Is there a natural morphism relation $S$ such that $(S,E_S)$ is invariantly universal? Similarly, is there a natural morphism relation $R$ which is complete analytic (or even invariantly universal when paired with some natural equivalence relation $E \subseteq E_R$) but such that $(R,E_R)$ is \emph{not} invariantly universal? In particular, if $S$ is the embeddability relation between countable graphs or one of the morphism relations considered in Section \ref{sectionapplications}, is it true that $(S,E_S)$ is invariantly universal?
\end{question}

Notice that Theorem \ref{theorsaturation} cannot be used to answer Question \ref{questionfinal}: if $S$ is complete for the class of analytic quasi-orders then $E_S$ is complete for the class of analytic equivalence relations, and hence it is not Borel reducible to an orbit equivalence relation.

\bibliography{cmmr}
\bibliographystyle{alpha}

\end{document}